\newtheorem{theorem}{Theorem}[section]
\newtheorem{lemma}[theorem]{Lemma}
\newtheorem{proposition}[theorem]{Proposition}
\newtheorem{corollary}[theorem]{Corollary}
\theoremstyle{definition}
\newtheorem{definition}[theorem]{Definition}
\newtheorem{notation}[theorem]{Notation}
\newcommand{\wR}[1]{w_R(#1)}
\newcommand{\wL}[1]{w_L(#1)}
\newcommand{\wC}[1]{w_C(#1)}
\newcommand {\ba}{\mathbf{a}}
\newcommand {\bx}{\mathbf{x}}
\newcommand {\by}{\mathbf{y}}
\newcommand {\bz}{\mathbf{z}}
\newcommand {\bu}{\mathbf{u}}
\newcommand {\x}{\mathbf{x}}
\newcommand {\X}{\mathbf{X}}
\newcommand {\R}{\mathbb{R}}
\newcommand {\Z}{\mathbb{Z}}
\newcommand {\F}{F_n}
\newcommand{\ol}[1]{\overline{#1}}
\newcommand{\la}{\langle}
\newcommand{\ra}{\rangle}
\newcommand{\FSC}[1][n]{\mathcal{FS}_{#1}}
\newcommand{\ESC}[1][n]{\mathcal{ES}_{#1}}
\newcommand{\FS}[1][n]{\mathcal{FS}_{#1}^1}
\newcommand{\ES}[1][n]{\mathcal{ES}_{#1}^1}
\newcommand{\FFZ}[1][n]{\mathcal{FF}_{#1}^1}% free factorization
\newcommand{\FF}[1][n]{\mathcal{F}_{#1}^1}% free factor complex
\newcommand{\FFZC}[1][n]{\mathcal{FF}_{#1}}% free factorization
\newcommand{\FFC}[1][n]{\mathcal{F}_{#1}}% free factor complex
\newcommand{\Aut}[1][F_n]{\mathop{\mathrm{Aut}}(#1)}
\newcommand{\Inn}[1][F_n]{\mathop{\mathrm{Inn}}(#1)}
\newcommand{\Out}[1][F_n]{\mathop{\mathrm{Out}}(#1)}
\newtheorem*{theoremquasiflat}{Theorem \ref{thm:quasiflat}}
\newtheorem*{theoremlowerbound}{Theorem \ref{thm:lowerbound}}
\newtheorem*{corollarynothyperbolic}{Corollary \ref{cor:nothyperbolic}}
\newtheorem*{corollaryinfinitedimension}{Corollary \ref{cor:infinitedimension}}
\newtheorem*{corollarynotqi}{Corollary \ref{cor:notqi}}
\newtheorem*{corollarynotqi2}{Corollary \ref{cor:notqi2}}
\begin{document}

\title{On the geometry of a proposed curve complex analogue for $\Out$}
\author[L.Sabalka]{Lucas~Sabalka}
      \address{Department of Mathematical Sciences\\
               Binghamton University\\
               Binghamton NY  13902-6000
}%               \\ http://www.math.binghamton.edu/sabalka}
\author[D.Savchuk]{Dmytro~Savchuk}
%      \address{Department of Mathematical Sciences\\
%               Binghamton University\\
%               Binghamton NY  13902-6000
%}

\begin{abstract}

The group $\Out$ of outer automorphisms of the free group has been an object of active study for many years, yet its geometry is not well understood.  Recently, effort has been focused on finding a hyperbolic complex on which $\Out$ acts, in analogy with the curve complex for the mapping class group.  Here, we focus on one of these proposed analogues:  the edge splitting complex $\ESC$, equivalently known as the separating sphere complex.  We characterize geodesic paths in its 1-skeleton $\ES$ algebraically, and use our characterization to find lower bounds on distances between points in this graph.

Our distance calculations allow us to find quasiflats of arbitrary dimension in $\ESC$.  This shows that $\ESC$: is not hyperbolic, has infinite asymptotic dimension, and is such that every asymptotic cone is infinite dimensional.  These quasiflats contain an unbounded orbit of a reducible element of $\Out$.  As a consequence, there is no coarsely $\Out$-equivariant quasiisometry between $\ESC$ and other proposed curve complex analogues, including the regular free splitting complex $\FSC$, the (nontrivial intersection) free factorization complex $\FFZC$, and the free factor complex $\FFC$, leaving hope that some of these complexes are hyperbolic.

\end{abstract}

\maketitle

%%%%%%%%%%%%%%%%%%%%%%%%%%%%%%%%%%%%%%%%%%%%%%%%%%%
\section{Introduction}\label{sec:intro}

Let $\Out$ denote the group of outer automorphisms of the free group
$\F$ of rank $n$, where we assume throughout this paper that $n >
2$.  We wish to study the geometry of $\Out$, by examining the
geometry of certain spaces on which group acts.  There is a strong
analogy between $\Out$ and the mapping class group of a surface on
the one hand and arithmetic groups on the other, which has been
pursued quite fruitfully in the last couple of decades.  This
approach began in earnest with the foundational paper of Culler and
Vogtmann \cite{culler_v:outer_space}, which introduced Outer Space,
the analogue for $\Out$ of Teichm\"uller space for the mapping class
group and of symmetric spaces for arithmetic groups.  The work that
followed has yielded numerous statements about the topological,
homological, and cohomological properties of $\Out$ and the spaces
it acts upon -- see for instance
\cite{vogtmann:aut_fn_and_outer_space} for an excellent survey.

While the topology of Outer Space is well-understood, its geometry
is not.  In contrast, the geometries of Teichm\"uller space and the
symmetric spaces are well-studied.  One key ingredient for the study
of Teichm\"uller space is the celebrated result of Masur and Minsky,
who proved that the curve complex is hyperbolic \cite{MasurMinsky}.
The \emph{curve complex} is the complex whose vertex set is the set
of isotopy classes of simple closed curves on the surface, and where
a $k$-simplex corresponds to $k+1$ isotopy classes which have
representatives that are disjoint.  Moreover, there is a `nice' map
from Teichm\"uller space to the curve complex, so that the
hyperbolicity of the curve complex has led to many further
statements on the geometry of Teichm\"uller space and the mapping
class group \cite{BehrstockKleinerMinskyMosher}.  The curve complex
has been used, for instance, to prove quasiisometric rigidity of the
mapping class group.  The analogous key ingredients in the study of
arithmetic groups are Tits buildings, which again yield, for
instance, rigidity theorems.  The `correct' analogue for $\Out$ is
still unknown, and much recent effort has been directed towards
finding one -- in particular, one which is hyperbolic.

There are many possible ways of defining such an analogue.  We will
formally define the most relevant two soon, but we leave definitions
of the remaining complexes and graphs to the references. Before we list some of proposed analogues, let us mention that in most cases they are defined as complexes, but for our purposes (detecting hyperbolicity and distinguishing the spaces up to quasiisometry) it is enough to consider just 1-skeletons of the complexes. For each complex we will denote its 1-skeleton by adding superscript `1' to the notation of the complex. Although we will rigorously define and work only with 1-skeletons of the complexes to simplify exposition, our results apply to the corresponding complexes as well.

Complexes and graphs which deserve mention as possible analogues include:  the \emph{sphere complex} \cite{Hatcher:HomologicalStability}, also called the \emph{free splitting complex} $\FSC$, and its $1$-skeleton $\FS$, called the \emph{free splitting graph} \cite{AramayonaSouto}; the \emph{(common refinement) free factorization complex}, defined in \cite{HatcherVogtmann:CerfTheory} for $\Aut$, whose $\Out$ version we call the \emph{edge splitting complex} $\ESC$ in this paper; the \emph{free factor complex} $\FFC$ (also defined initially for $\Aut$ in \cite{HatcherVogtmann:Complex_of_free_factors}); and the \emph{intersection graph} of Kapovich and Lustig \cite{kapovich_l:analogues_of_curve_complex}.  Kapovich and Lustig \cite{kapovich_l:analogues_of_curve_complex} in fact list 9 graphs which could be an analogue of the curve complex.  They include the $1$-skeleton of the edge splitting complex which we call the \emph{edge splitting graph} $\ES$ (called the \emph{free splitting graph} in~\cite{kapovich_l:analogues_of_curve_complex}, though they do not allow HNN-extensions as vertices) and the $1$-skeleton of the free factor complex which we call the \emph{free factor graph} $\FF$ (called the \emph{dominance graph} in~\cite{kapovich_l:analogues_of_curve_complex}).

Kapovich and Lustig claim that, among the 9 graphs they list, there are at most 3 quasiisometry classes.  Representatives of the three mentioned quasiisometry classes are the edge splitting graph, the free factor graph, and the intersection graph.  We intend to show that the class containing the edge splitting graph cannot be coarsely $\Out$-equivariantly quasiisometric to the other two.  For our purposes, it will be more convenient to use what we call the \emph{(nontrivial intersection) free factorization graph} $\FFZ$ instead of the free factor graph as a representative of the second quasiisometry class.  Note that our free factorization graph is not the $1$-skeleton of Hatcher and Vogtmann's (common refinement) free factorization complex (herein called the edge splitting graph), and that this graph was called the \emph{dual free splitting graph} in~\cite{kapovich_l:analogues_of_curve_complex}, though again in the latter reference they did not allow HNN-extensions as vertices.  We now define the edge splitting graph and the free factorization graph.

\begin{definition}[$\ES$ and $\FFZ$]
For $n > 2$, define the \emph{edge splitting graph}, denoted
$\ES$, to be the graph whose vertices correspond to conjugacy
classes  $[\la x_1, \dots, x_k \ra * \la x_{k+1}, \dots, x_n\ra]$ of
free factorizations $\la x_1,\dots, x_k\ra * \la x_{k+1}, \dots,
x_n\ra$ of $\F$ into two nontrivial free factors.  Two vertices of
$\ES$ are connected with an edge if there exists a free
factorization in each conjugacy class such that the two
factorizations have a common refinement which is a free
factorization into three nontrivial factors.

The \emph{(nontrivial intersection) free factorization graph} $\FFZ$
has the same vertex set as $\ES$.  Two vertices $[A*B]$ and
$[C*D]$ are connected with an edge in $\FFZ$ if one of $A \cap C$, $A \cap D$, $B \cap C$, or $B \cap D$ is nontrivial.
\end{definition}

The name of the edge splitting graph comes from Bass-Serre theory,
where such a free factorization is a graph of groups decomposition
of $\F$ with underlying graph having exactly two vertices and a
single edge (with trivial edge group) between them.  Note that the related \emph{free splitting graph} $\FS$ (the
$1$-skeleton of the free splitting complex or equivalently the
sphere complex) is defined similarly to $\ES$, but also allows conjugacy classes of splittings of $F_n$ as HNN-extensions as vertices.

There are alternate ways to define each of these objects.  In particular, the edge splitting graph $\ES$ is also known as the \emph{separating sphere graph}, whose vertices are homotopy classes of separating essential embedded spheres in a 3-manifold with fundamental group $F_n$, and two vertices are adjacent if they have disjoint representatives.  The free factorization graph can equivalently be defined in terms of Bass-Serre theory, where vertices are Bass-Serre trees of free splittings up to $\Out$-equivariant isometry, and adjacency corresponds to having a common elliptic element.

Note there is a natural action of $\Out$ on all of these spaces, where for $\ES$ and $\FFZ$ the action is induced by the action of $\Out$ on free factorizations.

Not all that much is known about either of these graphs or their siblings.  Hatcher showed that the sphere complex, which contains Outer Space as a dense subspace, is contractible (this gives an alternate proof of contractibility of Outer Space
\cite{culler_v:outer_space}, as the contraction restricts to a contraction of Outer Space).  Hatcher and Vogtmann showed that the edge splitting and free factor complexes -- at least the $\Aut$ versions of them, where we do not identify objects which differ by conjugation -- are both $(n-2)$-spherical \cite{HatcherVogtmann:CerfTheory,HatcherVogtmann:Complex_of_free_factors} (again, Hatcher and Vogtmann use the terminology `free factorization complex' in place of `edge splitting complex'). It seems to be an open question whether the $\Out$ versions of these complexes are also spherical.  To study $\Out$, Guirardel \cite{Guirardel} has introduced a notion of intersection form for two actions of $\Out$ on $\R$-trees.  Behrstock, Bestvina, and Clay \cite{BehrstockBestvinaClay} used Guirardel's intersection form to describe the effect of applying \emph{fully irreducible} automorphisms without periodic conjugacy classes to vertices in $\ES$.  They also discuss the edge splitting complex (therein called the splitting complex though HNN extensions are not allowed, as in \cite{kapovich_l:analogues_of_curve_complex}), and a related complex called the \emph{subgraph complex}.  Kapovich and Lustig \cite{Kapovich:Currents,Lustig:IntersectionForm} have also introduced an intersection form (distinct from Guirardel's), inspired by the work of Bonahon \cite{Bonahon}.  Kapovich and Lustig have shown that $\ES$ and $\FFZ$, as well as their intersection graph and 6 other related graphs, all have infinite diameter \cite{kapovich_l:analogues_of_curve_complex}. Recently, Yakov Berchenko-Kogan \cite{BerchenkoKogan} characterized vertices of distance 2 apart in the \emph{ellipticity graph}, a graph quasiisometric to $\FFZ$, using Stallings foldings.  This effectively characterizes adjacent vertices in $\FFZ$.  Very recently, Day and Putman~\cite{day_p:partial_bases} proved that another curve complex analogue, the \emph{complex of partial bases}, is simply connected.  The 1-skeleton of this complex is called the primitivity graph in~\cite{kapovich_l:analogues_of_curve_complex}, where it is also claimed that this graph is quasiisometric to the free factorization graph $\FFZ$. Aramayona and Souto have shown that $\Out$ is precisely the group of simplicial automorphisms of the free splitting complex $\FSC$ \cite{AramayonaSouto}.

None of these spaces are yet known to be hyperbolic.  Bestvina and
Feign \cite{BestvinaFeighn} have shown that, for any finite set $S$
of \emph{fully irreducible} outer automorphisms (see the paper for
definitions), there exists a hyperbolic graph $X$ with an isometric
$\Out$ action such that for any $\phi \in S$, $\phi$ acts with
positive translation length on $X$.  However, while these graphs are
hyperbolic, they depend on the choice of the finite set $S$.  The
results of Bestvina and Feighn imply that for a fully irreducibly
outer automorphism $\phi$, the maps $\Z \to \ES$ and $\Z \to
\FFZ$ given by $n \mapsto \phi^n v$ for any vertex $v$ is a
quasiisometric embedding.  Behrstock, Bestvina, and Feighn
\cite{BehrstockBestvinaClay} state that ``there is a hope that a
proof of hyperbolicity of the curve complex generalizes to the
[edge splitting] complex''.  However, we intend to prove:

\begin{theoremquasiflat}
For $n > 2$, the space $\ES$ (and hence $\ESC$) contains a quasiisometrically embedded copy of $\R^m$ for every $m\geq 1$.
\end{theoremquasiflat}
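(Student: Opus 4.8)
\medskip
\noindent\textbf{Sketch of the intended argument.}
The plan is to produce, for each fixed $m\ge 1$, an explicit family of vertices $\{v_{\mathbf k}\}_{\mathbf k\in\Z^m}$ of $\ES$ and to prove that $\mathbf k\mapsto v_{\mathbf k}$ is a quasiisometric embedding of $\Z^m$, equipped with a suitable metric quasiisometric to the Euclidean one, into $\ES$. This will suffice: pre-composing with the (quasiisometric) coordinatewise rounding map $\R^m\to\Z^m$ produces a quasiisometric embedding of $\R^m$, the constants being allowed to depend on the now-fixed $m$; and the inclusion of the $1$-skeleton $\ES\hookrightarrow\ESC$ is a quasiisometry, so a quasiflat in $\ES$ yields one in $\ESC$.

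For the construction, fix a free basis $x_1,\dots,x_n$ of $\F$. Since $n>2$ the free factor $\la x_2,\dots,x_n\ra$ has rank at least $2$, so we may choose words $w_1,\dots,w_m\in\la x_2,\dots,x_n\ra$ that are ``syllable-separated'' --- concretely one may take $w_j=x_3\,x_2^{\,2j}\,x_3$ --- the point being that the exponent vector $\mathbf k=(k_1,\dots,k_m)$ is recovered unambiguously from the cyclically reduced word $w_1^{k_1}w_2^{k_2}\cdots w_m^{k_m}$. Put $u_{\mathbf k}=x_1\,w_1^{k_1}w_2^{k_2}\cdots w_m^{k_m}$; since $u_{\mathbf k}=x_1\cdot(\text{word in }x_2,\dots,x_n)$, the tuple $(u_{\mathbf k},x_2,\dots,x_n)$ is again a free basis of $\F$, and we set $v_{\mathbf k}:=\bigl[\,\la u_{\mathbf k}\ra * \la x_2,\dots,x_n\ra\,\bigr]\in\ES$. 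Inside this family, the line $\{v_{k e_1}\}_{k\in\Z}$ is exactly the orbit of $v_{\mathbf 0}$ under the automorphism $\varphi$ sending $x_1\mapsto x_1 w_1$ and fixing $x_2,\dots,x_n$; as $\varphi$ preserves the proper free factor $\la x_2,\dots,x_n\ra$ it is reducible, which is what will account for the unbounded orbit of a reducible element asserted in the abstract.

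The upper bound is routine. Changing a single coordinate $k_j\mapsto k_j\pm1$ alters $u_{\mathbf k}$ by inserting or deleting one block $w_j^{\pm1}$ in the middle of the word; one writes down a bounded-length path in $\ES$ realizing this by exhibiting explicit intermediate free factorizations together with their common refinements into three nontrivial factors. Concatenating over the coordinates gives $d_{\ES}(v_{\mathbf k},v_{\mathbf l})\le\sum_{j=1}^{m} C_j\,|k_j-l_j|+C'$ with constants $C_j,C'$ independent of $\mathbf k$ and $\mathbf l$.

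The lower bound is the heart of the proof, and where our algebraic description of geodesic paths in $\ES$ and the distance estimate of Theorem~\ref{thm:lowerbound} enter. Applying that estimate to the pair $v_{\mathbf k},v_{\mathbf l}$, one must show that any edge-path between them is forced to ``resolve'' the differing syllable structure of $u_{\mathbf k}$ and $u_{\mathbf l}$ one bounded piece at a time, and --- the crucial and most delicate point --- that the costs incurred for the various coordinates $j$ add up rather than being absorbable in parallel, so that $d_{\ES}(v_{\mathbf k},v_{\mathbf l})\ge\sum_{j=1}^{m} c_j\,|k_j-l_j|-c'$ with each $c_j$ comparable to the corresponding $C_j$. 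Ruling out such shortcuts --- paths that simultaneously economize across several of the $w_j$-blocks --- is the main obstacle, and is precisely what the exact form of the geodesic characterization must be engineered to handle. Granting this, the two bounds exhibit $\mathbf k\mapsto v_{\mathbf k}$ as a quasiisometric embedding of $\Z^m$ with the rescaled $\ell^1$ metric $\sum_j C_j|k_j-l_j|$, which is bi-Lipschitz to the standard Euclidean metric; by the reduction in the first paragraph, $\ES$, and hence $\ESC$, then contains a quasiisometrically embedded copy of $\R^m$.
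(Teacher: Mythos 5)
Your overall skeleton matches the paper's: index the quasiflat by $\Z^m$, realize each lattice point as a vertex of $\ES$ by multiplying one basis element by a product $w_1^{k_1}\cdots w_m^{k_m}$ of words in the remaining generators, get the upper bound from an explicit edge path, and get the lower bound from Theorem~\ref{thm:lowerbound}. But there are two genuine gaps, one of which is fatal for the specific words you chose.

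First, the lower bound is the entire content of the theorem and you have not carried it out. You write that one ``must show that any edge-path between them is forced to resolve the differing syllable structure one bounded piece at a time'' and that ``the costs incurred for the various coordinates $j$ add up rather than being absorbable in parallel,'' and then you conclude with ``Granting this.'' That is precisely what must be proved. Theorem~\ref{thm:lowerbound} reduces the distance estimate to a lower bound on the full $i$-length $|\bx|_i$ of the relevant basis, which in turn reduces (via Corollary~\ref{cor:noai} and Lemma~\ref{lem:subtosimple}) to a lower bound on a conjugate reduced $i$-length, and the argument must then control how an adversarial nested family of canceling pairs can destroy the syllable structure of $w_1^{-k_1}\cdots w_m^{-k_m}w_1^{l_1}\cdots w_m^{l_m}$. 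In the paper this is the careful count ``$d - 4|\mathcal F|$'' of surviving leftovers $q^{(t)}$ followed by a case split on $|\mathcal F|$ against $d$; nothing in your sketch does this work, and ``engineered to handle'' is not a proof that it does.

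Second, and worse, your concrete words $w_j = x_3\,x_2^{2j}\,x_3$ cannot possibly feed into the $i$-length machinery of Theorem~\ref{thm:lowerbound}. The Whitehead graph $\Gamma_{\{x_2,\dots,x_n\}}(w_j)$ is the path $x_3 \,\text{---}\, x_2^{-1}\,\text{---}\, x_2\,\text{---}\, x_3^{-1}$ together with isolated vertices $x_4^{\pm1},\dots,x_n^{\pm1}$ when $n>3$. This graph is disconnected for $n>3$, and even for $n=3$ the vertices $x_2^{\pm1}$ are cut vertices. Hence $|w_j|^{simple}_i = 0$, and by Lemma~\ref{lem:nocutvertex} the same is true of $|w_j|^{cr}_i$ and of every subword of every $w_j^{\pm1}$. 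The full $i$-length of your bases is therefore uniformly bounded (indeed essentially zero), and Theorem~\ref{thm:lowerbound} returns a trivial lower bound. The paper's words $p_t = a_1^{t+1}a_2^{t+1}\cdots a_{n-1}^{t+1}a_1^{t+1}a_2^{t+1}a_1^{t+1}$ are chosen precisely so that removing $a_n^{\pm1}$ from their Whitehead graphs leaves a cut-vertex-free graph, and so that free reduction between adjacent blocks $p_s^{\pm1}, p_t^{\pm1}$ touches at most one $a_1$-syllable and preserves this property. Your ``syllable-separated'' requirement addresses only recoverability of exponents (and in fact fails too: $w_1^{-1}w_2 = x_3^{-1}x_2^{2}x_3$ has a collapsed syllable pattern), but it is orthogonal to the cut-vertex condition that the $i$-length lower bound actually requires.
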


Our proof relies on attaining an understanding of distances in
$\ES$.  To do so, we associate vertices of $\ES$ with bases of
$\F$.  With this association, we are able to completely characterize
(up to distance 4) the length of a path in $\ES$ via a simple
algebraic notion which we call \emph{number of index changes}.  This
characterization is made precise in Theorem \ref{thm:geodesics} and
the preceding discussion.

To utilize this translation from geometry to algebra, we then
introduce an algebraic notion of complexity of a basis, which we
call \emph{$i$-length}.  The notion of $i$-length is itself based
roughly on having many subwords of elements of the basis with
complicated Whitehead graphs. Our techniques, in turn, use a theorem
of Stallings (see Section \ref{sec:ilength} for details). The bulk
of this paper aims to translate this $i$-length notion of how
complicated a basis is into a lower bound on distances between
vertices in $\ES$, as shown in the following theorem:

\begin{theoremlowerbound}
Let $\bx$ be a basis of $\F$, expressed in terms of a fixed standard
basis $\ba$.  The distance between a vertex of $\ES$ associated
to $\ba$ and one associated to $\bx$ is at least $\frac{|\bx|_i}{24}
- 1$, where $|\bx|_i$ is the $i$-length of $\bx$.
\end{theoremlowerbound}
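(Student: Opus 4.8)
The plan is to establish the lower bound by a counting argument that tracks how the $i$-length of a basis can change along a path in $\ES$. First I would set up the correspondence between vertices of $\ES$ and (conjugacy classes of) bases of $\F$: given a free factorization $[\la x_1,\dots,x_k\ra * \la x_{k+1},\dots,x_n\ra]$, one records a basis together with the splitting position, and the key point is that adjacent vertices in $\ES$ — those with a common refinement into three factors — are represented by bases that differ in a controlled way (roughly, a bounded number of Whitehead-type moves, or a single ``edge collapse/expansion'' compatible with a common graph-of-groups decomposition). I would make precise the claim alluded to before Theorem~\ref{thm:geodesics}, namely that moving along one edge of $\ES$ changes the basis by an amount bounded in terms of ``number of index changes,'' and I would want a quantitative statement: passing to an adjacent vertex alters the $i$-length by at most some absolute constant (the constant $24$ in the statement strongly suggests a small fixed bound per step, say at most $24$, or a bound of the form ``at most $12$ on each side'').

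Second, I would prove the crucial stability estimate: \emph{$i$-length changes by a bounded amount under an elementary move relating adjacent vertices of $\ES$}. This is where I expect to invoke the theorem of Stallings referenced in Section~\ref{sec:ilength} — presumably the fact that a subgroup carried by a folded/immersed graph has a predictable core, so that the complicated-Whitehead-graph subwords counted by $i$-length cannot be created or destroyed too quickly. Concretely, if $v$ and $w$ are adjacent, then there are representative bases $\bx$ for $v$ and $\by$ for $w$ with $\big||\bx|_i - |\by|_i\big| \le c$ for an absolute constant $c$. Given such an estimate, the lower bound follows immediately by the triangle inequality along a path: if $v_0 \sim v_1 \sim \cdots \sim v_d$ with $v_0$ associated to $\ba$ and $v_d$ to $\bx$, then $|\bx|_i = \big||\bx|_i - |\ba|_i\big| \le c\,d$, since $|\ba|_i = 0$ for the standard basis, whence $d \ge |\bx|_i / c$. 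Matching the stated bound $\frac{|\bx|_i}{24} - 1$ would require $c = 24$ and careful bookkeeping of the ``$-1$'' (which likely comes from the distinction between a vertex being associated to a basis versus a basis-with-splitting, i.e.\ one extra step to choose the splitting position, or from the ``up to distance $4$'' slack in the geodesic characterization).

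The main obstacle, I expect, is proving the per-edge stability estimate with an \emph{absolute} constant — i.e.\ showing that $i$-length is coarsely Lipschitz for adjacency in $\ES$, uniformly in $n$ and in the complexity of the bases involved. The subtlety is that a common refinement into three factors can rewrite a basis quite drastically as a word in the fixed standard basis $\ba$: the word lengths can blow up, and with them the number of subwords with complicated Whitehead graphs could in principle change a lot. The resolution must be that $i$-length is defined intrinsically enough (in terms of the basis as an abstract free basis, detected via Stallings folds / Whitehead graphs of the \emph{elements}, not their $\ba$-spellings) that an elementary factorization move — which geometrically is just sliding or collapsing one edge in a graph-of-groups picture — can only affect a bounded number of the relevant subwords. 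I would therefore spend the bulk of the argument (i) pinning down exactly which ``elementary move'' suffices to connect adjacent vertices (this is the content imported from Theorem~\ref{thm:geodesics}), and (ii) running the Stallings-fold analysis to bound the change in $i$-length under that single move, being careful that the constant does not secretly depend on $n$ or on the starting basis.
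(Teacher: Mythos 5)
Your overall strategy matches the paper's exactly: translate distance in $\ES$ to ``number of index changes'' via Theorem~\ref{thm:geodesics}, establish a per-step stability estimate showing that each $\mathcal{S}$-transformation changes $i$-length by at most an absolute constant ($24$, realized as $12+12$ for the two halves of the transformation as you guessed), and finish with a telescoping inequality using $|\ba|_i = 0$. You even correctly flag that the ``$-1$'' comes from bookkeeping slack (in the paper it is the fact that a word with $k$ index changes decomposes into $k+1$ $\mathcal{S}$-transformation pieces).

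However, there is a genuine gap where the actual mathematical work lives. You explicitly defer the per-step stability estimate, and your sketch of how to prove it does not track the paper's argument. You invoke Stallings in the form of ``a subgroup carried by a folded/immersed graph has a predictable core,'' but the result actually needed is Theorem~\ref{thm:cutvertex}: a separable set has an augmented Whitehead graph with a cut vertex. The paper's route to the stability estimate is the chain Lemma~\ref{lem:kis0} $\Rightarrow$ Lemma~\ref{lem:Xtox} $\Rightarrow$ Corollary~\ref{cor:bxby} $\Rightarrow$ Lemma~\ref{lem:BasisLength}. Lemma~\ref{lem:kis0} is the heart of it: after applying the canonical conjugator $\alpha_{\bx}$, every subword of an $i$-chunk of a basis element has conjugate reduced $i$-length $0$, proved by contradiction using the cut-vertex theorem and a secondary argument that deletes $a_i$ via Nielsen reduction to produce a separable set in a smaller free group. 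Corollary~\ref{cor:bxby} then shows two bases sharing an element containing $a_i$ have $i$-lengths within $4$, and Lemma~\ref{lem:BasisLength} strings three such comparisons together (inserting a cleverly chosen auxiliary basis $\bx' = (\bx - \{x\}) \cup \{xy\}$ when $\bx_{\ol S}$ avoids $a_i$) to get the $\pm 12$ bound for one-sided $S$-transformations. None of this appears in your proposal, and without it the claim ``$i$-length is coarsely Lipschitz under an index change'' is unsupported. Your concern that ``word lengths can blow up'' under a refinement move is precisely the danger that this lemma chain is built to neutralize, and the neutralization is nontrivial: it is not automatic from any general fact about Stallings folds, but depends on the specific normalization by $w_L$, $w_R$, $w_C$ built into the definition of full $i$-length.
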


As immediate corollaries of Theorem \ref{thm:quasiflat}, we obtain:
\begin{corollarynothyperbolic}
The space $\ES$ is not Gromov hyperbolic.
\end{corollarynothyperbolic}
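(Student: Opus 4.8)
The plan is to derive Corollary~\ref{cor:nothyperbolic} directly from Theorem~\ref{thm:quasiflat}, using the standard fact that a geodesic (or coarsely geodesic) metric space that is Gromov hyperbolic cannot contain a quasiisometrically embedded copy of $\R^2$. Concretely, if $\ES$ were $\delta$-hyperbolic, then every geodesic triangle in $\ES$ would be $\delta$-thin; but quasigeodesics in a $\delta$-hyperbolic space stay within bounded Hausdorff distance of genuine geodesics (the Morse lemma / stability of quasigeodesics), so a quasiisometrically embedded $\R^2$ would force the image of a Euclidean triangle of side length $L$ to be $\delta'$-thin for a constant $\delta'$ depending only on $\delta$ and the quasiisometry constants, independent of $L$. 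Taking $L \to \infty$ contradicts the fact that in $\R^2$ the center of a triangle with side $L$ is at distance $\sim L$ from all three sides. Hence no such $\delta$ can exist, and $\ES$ is not Gromov hyperbolic.

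First I would invoke Theorem~\ref{thm:quasiflat} with $m = 2$ to obtain a quasiisometric embedding $f \colon \R^2 \to \ES$, with constants $(K, C)$ say. Next I would recall, as a cited standard result (e.g.\ from Bridson--Haefliger), that in a $\delta$-hyperbolic geodesic space $X$, any $(K,C)$-quasigeodesic lies in the $R = R(\delta, K, C)$-neighborhood of a geodesic with the same endpoints. I would then consider, for each $L > 0$, the three points $f(P_L), f(Q_L), f(S_L)$ where $P_L, Q_L, S_L$ are the vertices of an equilateral (or right) triangle of side $L$ in $\R^2$, and the three straight segments between them, whose $f$-images are $(K,C)$-quasigeodesics. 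Combining thinness of the geodesic triangle on $f(P_L), f(Q_L), f(S_L)$ with the Morse lemma bound on each side, the midpoint of one side of the Euclidean triangle must map to within $\delta + 2R$ of the union of the images of the other two sides; pulling back through the quasiisometric embedding, that midpoint must be within $K(\delta + 2R) + KC$ of the other two Euclidean sides. Since this bound is independent of $L$ while the actual distance in $\R^2$ grows linearly in $L$, we reach a contradiction for $L$ large.

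A minor technical point to address is that $\ES$ is a graph, hence a geodesic metric space once we put the path metric on it, so the hypothesis ``geodesic space'' in the Morse lemma is satisfied; alternatively one can work with the simplicial metric on $\ESC$, which is also geodesic, and the statement of Theorem~\ref{thm:quasiflat} covers both. I would phrase the write-up so that the logical core is: \emph{``$\ES$ is a geodesic space containing a quasiflat of dimension $2$; a Gromov hyperbolic geodesic space cannot contain such a quasiflat; therefore $\ES$ is not hyperbolic.''} The one genuine (though entirely standard) input is the stability of quasigeodesics, which I would cite rather than prove. I do not expect any real obstacle here — the whole content of the corollary is packaged in Theorem~\ref{thm:quasiflat}, and this proof is a two-line deduction once the Morse lemma is quoted; the only thing to be careful about is stating the dependence of constants correctly so the contradiction as $L \to \infty$ is airtight.
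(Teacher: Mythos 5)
Your proposal is correct and matches the paper's approach: the paper states Corollary~\ref{cor:nothyperbolic} as an immediate consequence of Theorem~\ref{thm:quasiflat}, relying on exactly the standard fact (Morse lemma plus thin triangles) that a Gromov hyperbolic geodesic space admits no quasiisometrically embedded $\R^2$, which you spell out. The paper gives no further argument, so your write-up simply makes explicit what the paper leaves implicit.
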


In other words, $\ES$ is not the `correct' curve complex analogue for $\Out$.  This shows that the `hope' of \cite{BehrstockBestvinaClay} is a false one, at least for the edge splitting graph.  Indeed, it might be expected that the edge splitting graph is not hyperbolic:  edge splittings correspond to separating spheres in the sphere complex.  But in the mapping class group world, the subcomplex of the curve complex induced by only allowing separating curves is itself not hyperbolic \cite{Schleimer:CurveComplex}.

\begin{corollaryinfinitedimension}
The space $\ES$ has infinite asymptotic dimension.  The dimension
of every asymptotic cone of $\ES$ is infinite.
\end{corollaryinfinitedimension}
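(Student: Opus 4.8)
The plan is to derive Corollary~\ref{cor:infinitedimension} directly from Theorem~\ref{thm:quasiflat}, using standard facts about how asymptotic dimension and asymptotic cones behave under quasiisometric embeddings. First I would recall that asymptotic dimension is monotone under quasiisometric embeddings: if $Y \hookrightarrow X$ is a quasiisometric embedding, then $\mathrm{asdim}(Y) \le \mathrm{asdim}(X)$. Since $\R^m$ (with its usual metric) has asymptotic dimension exactly $m$, and Theorem~\ref{thm:quasiflat} produces a quasiisometrically embedded copy of $\R^m$ inside $\ES$ for every $m \ge 1$, we immediately get $\mathrm{asdim}(\ES) \ge m$ for all $m$, hence $\mathrm{asdim}(\ES) = \infty$. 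This takes care of the first sentence.

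For the statement about asymptotic cones, I would use the fact that a quasiisometric embedding of metric spaces induces, for any fixed choice of ultrafilter, scaling sequence, and basepoints, a bilipschitz embedding of the corresponding asymptotic cones. Applying this to the embedding $\R^m \hookrightarrow \ES$ from Theorem~\ref{thm:quasiflat}: every asymptotic cone of $\R^m$ is bilipschitz to $\R^m$ itself (this is a standard computation — the asymptotic cone of Euclidean space is Euclidean space), so every asymptotic cone of $\ES$ contains a bilipschitz-embedded copy of $\R^m$. A bilipschitz-embedded $\R^m$ has topological (covering) dimension $m$ — for instance by invariance of domain, a bilipschitz image of an open subset of $\R^m$ contains an open subset of dimension $m$ — so the ambient asymptotic cone has dimension at least $m$. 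Since this holds for every $m \ge 1$ and for every asymptotic cone simultaneously (the embedding $\R^m \hookrightarrow \ES$ is fixed independently of the cone data), each asymptotic cone of $\ES$ has infinite topological dimension.

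One subtlety I would want to address carefully is the quantifier order in the asymptotic cone statement: we need \emph{every} asymptotic cone to be infinite dimensional, not merely \emph{some} cone. This is fine because Theorem~\ref{thm:quasiflat} gives, for each $m$, a single quasiisometric embedding $\R^m \to \ES$ that does not depend on any asymptotic-cone parameters; passing to an arbitrary ultralimit then embeds $\mathrm{Cone}_\omega(\R^m) \cong \R^m$ bilipschitzly into $\mathrm{Cone}_\omega(\ES)$ regardless of the ultrafilter $\omega$, scaling sequence, and basepoint. Taking the supremum over $m$ inside each fixed cone yields the conclusion. I do not expect any real obstacle here — the corollary is a formal consequence of Theorem~\ref{thm:quasiflat} together with textbook properties of asymptotic dimension and asymptotic cones — so the only work is to cite or briefly justify the two monotonicity principles (asymptotic dimension under quasiisometric embeddings, and covering dimension of a space containing bilipschitz copies of $\R^m$ for all $m$) and to be precise about the quantifiers.
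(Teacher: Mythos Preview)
Your proposal is correct and matches the paper's intent: the paper gives no explicit proof of this corollary, simply listing it among the ``immediate corollaries'' of Theorem~\ref{thm:quasiflat}, and your argument is exactly the standard justification one would supply. The only point worth tightening is the basepoint quantifier in the asymptotic-cone half: a quasiisometric embedding $f\colon \R^m \to \ES$ only automatically induces a bilipschitz embedding into $\mathrm{Cone}_\omega(\ES,(e_n),(d_n))$ when the basepoints $e_n$ stay within bounded distance (after rescaling) of the image of $f$; for arbitrary $e_n$ the image could lie at infinite distance in the cone. This is harmless here because $\Out$ acts cocompactly on $\ES$ by isometries, so any basepoint sequence can be translated to a fixed basepoint in the image of $\psi$, and all asymptotic cones with the same $\omega$ and $(d_n)$ are isometric regardless of basepoint. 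With that one-line remark added, your argument is complete.
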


To the authors' knowledge, this is the only naturally defined space which has infinite asymptotic dimension and a natural cocompact group action of a group which is not known to have infinite asymptotic dimension.  Thompson's group $F$ acts on a cube complex with arbitrary-dimensional quasiflats \cite{Farley}, but has infinite asymptotic dimension (moreover, it is proved in~\cite{DranishnikovSapir} that $F$ has exponential dimension growth). Via private communication, Moon Duchin claims that the Cayley graph of $\Z$ with respect to the infinite generating set consisting of powers of $2$ has arbitrary-rank quasiflats.  Thus, we have a group with finite asymptotic dimension acting on a space with infinite asymptotic dimension.  However, this action is not cocompact:  the quotient is a graph with one vertex and infinitely many edges.  Both the mapping class group \cite{BestvinaBroombergFujiwara:asdimMCG} and arithmetic groups \cite{Ji:AsDimArithmetic} have finite asymptotic dimension, so the analogy between $\Out$ and these groups suggests that $\Out$ may in fact have finite asymptotic dimension.

There is a further interesting consequence of Theorem
\ref{thm:quasiflat}. There is a natural map $id^*$ from $\ES$ to
$\FFZ$ induced by the identity map on the vertex set. This map
$id^*$ is $1$-Lipshitz: if two free factorizations have a common
refinement, then any nontrivial elliptic element of the common
refinement will have translation length $0$ on both of the
corresponding Bass-Serre trees.  The quasiflats described in the
proof of Theorem \ref{thm:quasiflat} are in fact such that, for
every quasiflat, there exists a common elliptic element such that
every vertex in that quasiflat has a representative where one factor
contains the common elliptic element.  Thus,

\begin{corollarynotqi}
The map $id^*\colon \ES\to\FFZ$ is not a quasiisometry. Moreover,
there is no coarsely $\Out$-equivariant quasiisometry between $\ES$
and $\FFZ$.
\end{corollarynotqi}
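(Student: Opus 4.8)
The plan is to use that $id^*$ is $1$-Lipschitz and literally $\Out$-equivariant on vertices, together with the property recorded above that the quasiflats of Theorem~\ref{thm:quasiflat} are collapsed by $id^*$. For the first assertion, let $Q\subseteq\ES$ be a quasiisometrically embedded copy of $\R^m$ as in Theorem~\ref{thm:quasiflat} (with $m\geq 1$), and let $w\neq 1$ be the associated common elliptic element. Given two vertices of $Q$, I would pick for each a representative factorization having $w$ in one of its two factors; the two chosen factorizations then share the nontrivial element $w$ in a factor, so the two vertices are adjacent in $\FFZ$. Hence $id^*(Q)$ has diameter $\leq 1$ in $\FFZ$, whereas $Q$ has infinite diameter in $\ES$; since $id^*$ is $1$-Lipschitz it is therefore not a quasiisometric embedding, and in particular not a quasiisometry.

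For the ``moreover'' clause I would first establish the auxiliary fact that every \emph{reducible} $\phi\in\Out$ acts on $\FFZ$ with bounded orbits. Indeed, some power $\phi^{N}$ fixes the conjugacy class of a proper nontrivial free factor $A$; fixing a factorization $\F=A*B$ and conjugating, each $\phi^{Nk}[A*B]$ acquires a representative of the form $A*B_{k}$, so the vertices $\phi^{Nk}[A*B]$ literally share the factor $A$ and are pairwise adjacent in $\FFZ$. Translating by the isometries $\phi^{0},\dots,\phi^{N-1}$, and using connectedness of $\FFZ$ to bound the pairwise distances among the finitely many vertices $\phi^{r}[A*B]$, I would deduce that the orbit of $[A*B]$ has finite diameter, and then, by the triangle inequality and the fact that $\phi$ acts by isometries, that the orbit of \emph{every} vertex of $\FFZ$ has finite diameter. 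I expect the only technical nuisance here to be the conjugacy bookkeeping: one must check that ``$A$ lies in a factor up to conjugacy'' can be upgraded, after conjugating the whole factorization, to ``$A$ is a factor'', so that a genuine edge of $\FFZ$ results.

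Finally, since a coarse inverse of a coarsely $\Out$-equivariant quasiisometry is again a coarsely $\Out$-equivariant quasiisometry, it suffices to rule out a coarsely $\Out$-equivariant quasiisometry $f\colon\ES\to\FFZ$, say with $d_{\FFZ}(f(\phi v),\phi f(v))\leq C$ for all $\phi\in\Out$ and all vertices $v$. By the construction in the proof of Theorem~\ref{thm:quasiflat}, the quasiflat $Q$ contains an orbit $\{\phi^{k}v\}_{k\in\Z}$ of a reducible element $\phi$ that is unbounded in the metric of $\ES$. Then every $f(\phi^{k}v)$ lies within distance $C$ of the $\phi$-orbit $\{\phi^{k}f(v)\}_{k\in\Z}$, which is bounded by the auxiliary fact; hence $\{f(\phi^{k}v)\}_{k\in\Z}$ is bounded in $\FFZ$. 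But $\{\phi^{k}v\}_{k\in\Z}$ is unbounded in $\ES$ and $f$ is a quasiisometric embedding, so $\{f(\phi^{k}v)\}_{k\in\Z}$ must be unbounded --- a contradiction. (Taking $f=id^*$ and $C=0$ recovers the first assertion.) The only ingredient that has to be imported from elsewhere is the presence, inside each quasiflat, of an unbounded orbit of a reducible automorphism, which is part of the construction in Theorem~\ref{thm:quasiflat}; everything else in the argument is soft.
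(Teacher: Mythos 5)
Your proposal is correct and follows essentially the same approach as the paper's proof: the quasiflat has diameter $\leq 1$ in $\FFZ$ because all its vertices share an elliptic factor, and the ``moreover'' clause is handled by observing that orbits under a single $\phi$ are either all bounded or all unbounded in each space and then exhibiting an orbit unbounded in $\ES$ but bounded in $\FFZ$. The only difference is that you prove the auxiliary fact --- bounded $\FFZ$-orbits --- for \emph{every} reducible $\phi$, whereas the paper verifies it directly for the specific $\phi$ replacing $a_n$ by $a_np_1$; this mild generalization is harmless and, if anything, makes the mechanism more transparent, and your explicit appeal to the coarse inverse to reduce to one direction is a small technical point the paper leaves implicit.
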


An analogous results hold true for the relationships between the free factorization graph $\ES$ and the free factor graph $\FF$ and between $\ES$ and the free splitting graph $\FS$. There is a natural (coarsely well-defined for $n>2$)
map $\Sigma\colon \ES\to\FF$ defined by sending a
vertex $[A * B]$ in $\ES$ to the vertex $[A]$ in $\FF$. Also there is a natural embedding $\imath\colon \ES\to\FS$ defined by sending a vertex $[A*B]$ in $\ES$ to the vertex $[A*B]$ in $\FS$, which is quasisurjection. However, neither of the above maps is a quasiisometry:

\begin{corollarynotqi2}
The maps $\Sigma\colon \ES\to\FF$ and $\imath\colon \ES\to\FS$ are not quasiisometries. Moreover, there is no coarsely $\Out$-equivariant
quasiisometry between $\ES$ and $\FF$, and between $\ES$ and $\FS$.
\end{corollarynotqi2}

The last corollary provides a negative answer to a question of Bestvina and Feighn (the first half of Question 4.4
in~\cite{BestvinaFeighn}).

This paper is organized as follows.  We begin in Section
\ref{sec:3interpretations} by describing three ways of viewing an
element of $\Aut$.  Being able to translate between these three
perspectives will be useful at various points in the later proofs.
In Section \ref{sec:spaces}, we describe how to view vertices in
$\ES$ and $\FFZ$ as pairs consisting of an element of $\Aut$
and a proper nonempty subset of $\{1,2,\ldots,n\}$ up to certain
identifications. This viewpoint allows us to interpret distances in
$\ES$ algebraically, in terms of elements of $\Aut$, culminating
in Theorem \ref{thm:geodesics}.

Most of the detail in the paper lies in Section \ref{sec:ilength}.
In this section, we introduce the notion of $i$-length.  For
technical reasons, we use three different notions of $i$-length:
fixing some basis $\ba$ of $\F$, we have \emph{simple} $i$-length
for abstract words over $\ba$, \emph{conjugate reduced} $i$-length
for subwords written over $\ba$ of some other basis of $\F$, and
\emph{full} $i$-length for bases of $\F$ themselves.  In Section
\ref{sec:ilength}, we describe properties of each of these notions
of $i$-length in turn. The section builds up to, and ends with,
Theorem \ref{thm:lowerbound}.

Finally in Section \ref{sec:geometry}, we relate the algebraic notion of $i$-length to distances in $\ES$, and use this relationship to prove Theorem \ref{thm:quasiflat} and its corollaries, described above.

The authors would like to thank Ilya Kapovich, Diane Vavrichek,
Keith Jones, Dan Farley, and Karen Vogtmann for useful conversations
on this material, and Mladen Bestvina, Matt Clay, Michael Handel,
and especially Lee Mosher for useful comments.

%%%%%%%%%%%%%%%%%%%%%%%%%%%%%%%%%%%%%%%%%%%%%%%%%%%
\section{Three interpretations of $\Aut$}\label{sec:3interpretations}

Fix a basis $\ba = (a_1, \dots, a_n)$ of $\F$, considered as an
ordered tuple.  The group of all automorphisms of $\F$ has many
interpretations.  For our purposes, we will use three of these
interpretations, as follows.

The first interpretation of $\Aut$ is as in bijective correspondence with the set of ordered bases of $\F$. Consider a basis $\x = (x_1, \dots, x_n)$ of $\F$ as an ordered tuple.  As $\x$ is a basis, there exists an automorphism $\phi_\x$ which maps $\ba$ to $\x$; as automorphisms are uniquely specified by their action on a given generating set, $\phi_\x$ is unique.  Thus, $\Aut$ as a set is in bijective correspondence with the set
    $$\X := \{\x = (x_1,\dots, x_n)\in\F^n\ |\ \x \hbox{~is an ordered basis}\}.$$

The second interpretation of $\Aut$ is as products of \emph{elementary Nielsen automorphisms}.  Nielsen \cite{nielsen:aut_fn} described a generating set for $\Aut$ consisting of four types of generators:

\begin{definition}
An \emph{elementary Nielsen automorphism} is an automorphism of $\F$
for which there exist indices $i, j$ such that $i \neq j$, $a_k \mapsto a_k$
for $k \neq i, j$, and one of the following four possibilities holds:\\
$\begin{array}{rrl}
(1)&    s_{ij}:\qquad&  a_i \leftrightarrow a_j\\
(2)&    t_i:\qquad& a_i \mapsto a_i^{-1}\\
(3)&    a_{ij}:\qquad&  a_i \mapsto a_ia_j\\
(4)&    a_{ij}^{-1}:\qquad&  a_i \mapsto a_ia_j^{-1}\\
\end{array}$\\
\end{definition}
The group operation in $\Aut$ with respect to Nielsen automorphisms is function composition, where automorphisms are composed as functions, right-to-left.  Note a Nielsen automorphism $\phi$ acts on the Cayley graph of $\Aut$ via the usual left action.  We can interpret this action on the vertices of the Cayley graph in terms of the correspondence between $\Aut$ and $\X$:  an automorphism $\phi$ acting on a basis $\x \in \X$ has image $\phi(\x) = (\phi x_1, \dots, \phi x_n) = \phi \circ \phi_\x (\ba)$.

The third interpretation of $\Aut$ is as the group of Nielsen
transformations.  A \emph{Nielsen transformation} is an action on
the set of ordered bases of $\F$ (that is, on $\Aut$, by the first
interpretation) which may be decomposed as a product of
\emph{elementary Nielsen transformations}.  These elementary Nielsen
transformations are free-group analogues of the elementary row
operations in $GL_n(\Z) = Aut(\Z^n)$, and, in fact, induce the
elementary row operations under the abelianization map $\F \to
\Z^n$.  There are four kinds of elementary Nielsen transformations:

\begin{definition}
An \emph{elementary Nielsen transformation} is a map on the set of ordered
bases $\X = \{\x = (x_1, \dots, x_n)\}$ of $\F$ for which there exist
indices $i, j$ such that $i \neq j$, $x_k \mapsto x_k$ for $k \neq i, j$,
and one of the following four possibilities hold:\\
$\begin{array}{rrl}
(1)&    \sigma_{ij}: \qquad&    x_i \leftrightarrow x_j\\
(2)&    \tau_i: \qquad&     x_i \mapsto x_i^{-1}\\
(3)&    \alpha_{ij}: \qquad&    x_i \mapsto x_ix_j.\\
(4)&    \alpha_{ij}^{-1}:\qquad&  x_i \mapsto x_ix_j^{-1}\\
\end{array}$\\
An elementary Nielsen transformation of Types (3) and (4) are called
\emph{transvections}.
\end{definition}
The group operation in $\Aut$ with respect to Nielsen
transformations is again composition, but transformations are
composed left-to-right.  Nielsen transformations act on $\X$ on the
right.

The isomorphism between the groups generated by Nielsen automorphisms and by Nielsen
transformations is clear:  the isomorphism is $s_{ij} \mapsto
\sigma_{ij}$, $t_i \mapsto \tau_i$, $a_{ij} \mapsto \alpha_{ij}$.
Thus, a word in elementary Nielsen transformations may be considered
as a word in Nielsen automorphisms, \emph{written in the same
order}, but with the order of composition reversed and the action on
$\X$ on the left instead of the right.

These three interpretations are different aspects of the same
concept:  the set $\X$ may be viewed as the vertices of the Cayley
graph of $\Aut$; elementary Nielsen automorphisms form a generating
set of $\Aut$ and their action on $\X$ corresponds to the left
action of this generating set on its Cayley graph. This is the action such that an automorphism $g$ takes a vertex $v$ to $gv$, and takes an edge connecting $v$ to $va$ to an edge connecting $gv$ and $gva$ for each generator $a$ of $\Aut$. Elementary
Nielsen transformations form the same generating set, but with the
action on $\X$ being an interpretation of the right action of the
generating set on the vertices of its Cayley graph. When restricted to the action of a generator $a$ of $\Aut$, it simply moves a vertex $v$ across the edge connecting $v$ to $va$ to the vertex $va$. However, this right action does not extend to the edges of the Cayley graph.

In his seminal paper \cite{nielsen:aut_fn}, Nielsen presented a
method for transforming a finite generating set for a subgroup of a
free group into a free basis for that subgroup using elementary
Nielsen transformations. Nielsen's method is essentially a finite
reduction process, at every step of which a Nielsen transformation
is used to `simplify' the finite generating set.  In
Lemma~\ref{lem:kis0} we will apply this process to the bases of $\F$
and will use the following fact, whose proof follows from the proof
of Theorem 3.1 in~\cite{MagnusKarassSolitar}.

\begin{proposition}
\label{prop:nielsen} For every basis $\bx$ of a free group $\F$
there is a sequence of elementary Nielsen transformations
$(\delta_j)$, $1\leq j\leq t$ taking the standard basis $\ba$ of
$\F$ to $\bx = \ba \delta_1 \dots \delta_t$ such that the sum of the
lengths (with respect to $\ba$) of elements in the intermediate
bases $\ba \delta_1 \dots \delta_j$ is a nondecreasing sequence.
\end{proposition}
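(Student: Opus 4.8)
The plan is to invoke the standard Nielsen reduction process, as in the proof of Theorem 3.1 in \cite{MagnusKarassSolitar}, applied to the ordered tuple $\bx = (x_1, \dots, x_n)$ viewed as a generating set for $\F$, but to run it in reverse. Recall that Nielsen reduction takes an arbitrary finite generating tuple and, through a finite sequence of elementary Nielsen transformations, produces a Nielsen-reduced tuple, at each stage not increasing — and at certain stages strictly decreasing — the total length $\sum_i |y_i|_\ba$ of the current tuple $(y_1, \dots, y_n)$. Since $\bx$ is already a basis of $\F$ of rank $n$, the reduced tuple it produces is a Nielsen-reduced basis of $\F$; but the only Nielsen-reduced bases of $\F$ with respect to $\ba$ are, up to reordering and inverting entries, the standard basis $\ba$ itself (this is the key rigidity fact: a Nielsen-reduced set that generates all of $\F$ must consist of the generators $a_i^{\pm 1}$). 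So Nielsen reduction produces a sequence $\bx = \by_0, \by_1, \dots, \by_s$ of intermediate bases, consecutive ones differing by an elementary Nielsen transformation, with $\by_s$ equal to $\ba$ up to a permutation and sign changes of the entries, and with $\sum_i |(\by_j)_i|_\ba$ a nonincreasing sequence in $j$.

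Next I would dispose of the discrepancy between $\by_s$ and $\ba$: since $\by_s$ differs from $\ba$ only by a permutation of coordinates and by replacing some $a_i$ by $a_i^{-1}$, one can pass from $\by_s$ to $\ba$ by finitely many transformations of type $\sigma_{ij}$ and $\tau_i$, none of which changes any entry's length, so the total length stays constant (hence trivially nonincreasing) along these extra steps. Appending these to the reduction sequence gives a chain $\bx = \by_0, \by_1, \dots, \by_t = \ba$ of bases, consecutive ones related by an elementary Nielsen transformation, along which $\sum_i |(\by_j)_i|_\ba$ is nonincreasing. Now I simply reverse the sequence. Reversing an elementary Nielsen transformation yields another elementary Nielsen transformation (the inverses $\sigma_{ij}$, $\tau_i$ are themselves elementary, and $\alpha_{ij}^{\pm 1}$ are inverse to each other), so reading the chain backwards as $\ba = \by_t, \by_{t-1}, \dots, \by_0 = \bx$ exhibits a sequence of elementary Nielsen transformations $(\delta_j)$ carrying $\ba$ to $\bx$, and the total-length sequence — being the original one read backwards — is now nondecreasing. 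Writing $\delta_j$ for the transformation carrying $\by_{t-j+1}$ to $\by_{t-j}$ completes the construction, with $\bx = \ba\delta_1 \cdots \delta_t$ and the intermediate bases $\ba \delta_1 \cdots \delta_j$ having nondecreasing total $\ba$-length.

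The one point that genuinely needs care — and the step I expect to be the main obstacle — is making sure that the version of Nielsen reduction cited actually gives a monotone (nonincreasing) total length at \emph{every} step, not merely a net decrease from start to finish. The classical statement in \cite{MagnusKarassSolitar} is usually phrased in terms of a well-ordering on tuples combined with the fact that each elementary move either strictly decreases the relevant complexity or is one of the length-preserving moves $\sigma_{ij}, \tau_i$; so one must check that the elementary moves used in that proof never temporarily inflate $\sum_i |y_i|_\ba$ before deflating it. Inspecting the proof, the substitutions performed are always of the form $y_i \mapsto y_i y_j^{\pm 1}$ chosen precisely because cancellation makes $|y_i y_j^{\pm 1}|_\ba < |y_i|_\ba$, together with the length-neutral reorderings and inversions; hence the total length is indeed nonincreasing at each step, which is exactly what Proposition \ref{prop:nielsen} requires after reversal.
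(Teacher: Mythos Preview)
Your proposal is correct and follows exactly the approach the paper intends: the paper merely asserts that the proposition follows from the proof of Theorem~3.1 in \cite{MagnusKarassSolitar}, and you have spelled out precisely how --- run Nielsen reduction on $\bx$, append length-preserving $\sigma_{ij}$'s and $\tau_i$'s to reach $\ba$, and reverse the sequence. Your explicit attention to the step-by-step monotonicity of total length is the only point requiring care, and you have identified and addressed it correctly.
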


%%%%%%%%%%%%%%%%%%%%%%%%%%%%%%%%%%%%%%%%%%%%%%%%%%%
\section{Vertices and edges in $\ES$}\label{sec:spaces}

We wish to view the spaces $\ES$ and $\FFZ$ on which $\Out$ acts in the
language of ordered tuples, so that we may apply the dictionary of
Section \ref{sec:3interpretations} equating tuples, Nielsen
automorphisms, and Nielsen transformations.

We begin with an observation on elements of $\Out$.  An element of $\Out$ is a
coset of $\Aut$ with respect to the subgroup $\Inn$.  As such,
an element of $\Out$ may be represented by many different
$n$-tuples.  In general, we think of an element of $\Out$ as a tuple
up to conjugation.

Now consider the graphs $\ES$ and $\FFZ$.  These graphs have
the same vertex set:  vertices correspond to free factorizations of
$\F$ into two nontrivial factors up to conjugation.  We wish to
interpret an arbitrary free factorization of $\F$ into two factors
as a tuple, together with an index set, up to certain equivalences.
Let $\mathcal{S}$ denote the set of all proper nonempty subsets of
$\{1, \dots, n\}$. We will call an element of $\mathcal S$ an
\emph{index set}. Then a tuple $\bx = (x_1, \dots, x_n)$ together
with some index set $S \in \mathcal{S}$ yields a free factorization
of $\F$ as $\langle\bx_S\rangle
* \langle\bx_{\ol{S}}\rangle$, where $\bx_S := \{a_i \in \bx | i \in
S\}$ and $\ol{S} := \{1, \dots, n\} - S$.  Every free factorization may
be represented as a tuple/index set pair, but a given free factorization
may be represented by multiple tuple/index set pairs:  any
tuple/index set pairs which differ by a self-map of $\Aut \times
\mathcal{S}$ preserving the associated free factorization up to
conjugation should be identified.

Every such map can be written as a composition of four types of
self-maps, defined by their action on $(\bx,S)\in\Aut\times\mathcal
S$ as follows:
\begin{enumerate}
\item conjugation of $\bx$ without changing $S$,
\item permutation of $\{1,
\dots, n\}$ applied to both $\bx$ and $S$,
\item exchanging $S$ for $\ol{S}$ and leaving $\bx$ unchanged,
\item applying transformation $\phi$ of $\Aut$ fixing the free factors in the factorization
setwise (i.e. $\langle\bx_S\rangle =\langle(\bx\phi)_S\rangle$ and
$\langle\bx_{\ol{S}}\rangle =\langle(\bx\phi)_{\ol{S}}\rangle$)
without changing $S$.
\end{enumerate}

The transformation $\phi$ in the last item is called an
\emph{$S$-transformation}. If there exists $S \in \mathcal{S}$ such
that $\phi$ is an $S$-transformation, we call $\phi$ an
$\mathcal{S}$-transformation.

Note that any self-map from the group mentioned above may be
realized as composition of the form $m_1m_2m_3m_4$, where $m_i$ is a
self-map of type $(i)$.

With this interpretation of vertices of $\ES$, consider edges of
$\ES$.  Two vertices represented by $A_1 * B_1$ and $A_2 * B_2$
of $\ES$ are adjacent if there exists a common refinement of
conjugates of the free factorizations.  Such a common refinement is
of the form $A*C*B$, where for some elements $g$ and $h$ of $\F$ we
have $A_1^g = A*C$, $B_1^g = B$, $A_2^h = A$, and $B_2^h = C*B$.
Without loss of generality we can assume that $h$ is trivial. If
$(\bx,S)$ is the vertex corresponding to the free factorization
$A_1*B_1$, then $(\bx^g,S)$ represents the same vertex of $\ES$
and the refinement $A_1^g = A*C$ corresponds to applying to $\bx^g$
a transformation $\phi$ of $\F$ taking $\bx^g_S$ to a basis for $A$
union a basis for $C$ and preserving $B_1^g$.  Note $\phi$ fixes
both $\langle\bx^g_S\rangle$ and $\langle\bx^g_{\ol{S}}\rangle$, and
so is an $S$-transformation. Then changing $(A*C)*B$ to $A*(C*B)$
simply corresponds to subtracting from $S$ the indices of elements
in $\phi(\bx^g)$ corresponding to a basis for $C$.  Of course, by
exchanging $S$ for $\ol{S}$, we could have instead added elements to
$S$, which corresponds to subtracting elements from $\bar{S}$. Thus,
changing $(A*C)*B$ to $A*(C*B)$ corresponds to replacing $S$ with a
proper subset of either $S$ or $\bar{S}$. We call index sets $S$ and
$S'$ from $\mathcal S$ \emph{compatible} if either $S'$ or
$\overline{S'}$ is a proper subset of either $S$ or $\bar S$.

Thus, up to conjugation, all edges from the vertex corresponding to
$(\bx,S)$ are precisely characterized by a transformation fixing
$\langle\bx_S\rangle$ and $\langle\bx_{\ol{S}}\rangle$, followed by
replacing $S$ with a compatible element of $\mathcal{S}$.  We have
shown:

\begin{lemma}
The set of edges in $\ES$ from a vertex $(\bx,S)$ is determined
by:  a conjugation of $\bx$, an $S$-transformation, and a choice of
new index set compatible with $S$.
\end{lemma}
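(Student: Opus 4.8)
The plan is to collect the analysis already performed in the preceding paragraphs of the section and package it as a proof. The statement says that every edge of $\ES$ emanating from the vertex $(\bx,S)$ is obtained by: conjugating $\bx$, applying an $S$-transformation, and replacing $S$ by a compatible index set. All three ingredients were essentially identified in the discussion above, so the proof is a matter of assembling them carefully and checking that nothing is lost or double-counted under the identifications defining vertices of $\ES$.

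First I would recall the vertex description: a vertex of $\ES$ is a free factorization $A_1 * B_1$ of $\F$ into two nontrivial factors, up to conjugation, and such a factorization is represented by a tuple/index set pair $(\bx,S)$ with $A_1 = \langle\bx_S\rangle$ and $B_1 = \langle\bx_{\ol S}\rangle$. Next I would recall the edge criterion: $(\bx,S)$ is adjacent to a vertex represented by $A_2*B_2$ if and only if conjugates of the two factorizations admit a common refinement $A*C*B$ into three nontrivial factors. Using the freedom to conjugate (and noting, as in the text, that we may assume the conjugator $h$ on the second factorization is trivial), this refinement takes the normal form $A_1^g = A*C$, $B_1^g = B$, $A_2 = A$, $B_2 = C*B$ for some $g\in\F$.

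Then I would run the argument in the text step by step. Passing from $(\bx,S)$ to $(\bx^g,S)$ is exactly a conjugation of $\bx$; this is a self-map of type (1) and so represents the same vertex. The refinement $A_1^g = A*C$ is witnessed by choosing a transformation $\phi$ of $\F$ that sends the subtuple $\bx^g_S$ to (a basis of $A$) $\cup$ (a basis of $C$) while fixing $\bx^g_{\ol S}$ pointwise, or at least fixing $B_1^g = \langle\bx^g_{\ol S}\rangle$ setwise; since $\phi$ then fixes both $\langle\bx^g_S\rangle$ and $\langle\bx^g_{\ol S}\rangle$ setwise, it is by definition an $S$-transformation. Finally, passing from the bracketing $(A*C)*B$ to $A*(C*B)$ amounts to removing from $S$ the indices in $\phi(\bx^g)$ that span the $C$ factor; as the text observes, by possibly exchanging $S$ with $\ol S$ this is the same as replacing $S$ with a compatible index set in the sense just defined. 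Composing these moves, the neighboring vertex $A_2*B_2 = A*(C*B)$ is represented by applying to $(\bx,S)$ a conjugation, an $S$-transformation, and a compatible change of index set, which is precisely the asserted form. Conversely, any such triple of moves produces from $(\bx,S)$ a free factorization sharing a common three-factor refinement with the original (namely the intermediate bracketed splitting), hence an adjacent vertex, so the description is exact and not merely an upper bound.

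The main obstacle is bookkeeping rather than mathematics: one must be careful that the normalization ``$h$ trivial'' is legitimate (it is, because adjacency is defined up to conjugation of \emph{both} vertices, so one can absorb $h^{-1}$ into the conjugator on the first factorization), and that the three moves are applied in a fixed order — conjugation, then $S$-transformation, then index change — matching the normal form $m_1 m_2 m_3 m_4$ for self-maps noted earlier in the section. One should also note that different choices of $\phi$ spanning the same $C$ give the same vertex $A*(C*B)$, since they differ by a $C$-transformation composed with an $(A\cup C)$-transformation, which is again an $\mathcal S$-transformation and hence an allowed identification; this shows the correspondence between edges and triples $(\text{conjugation},\,S\text{-transformation},\,\text{compatible }S')$ is well-defined at the level of vertices, completing the proof.
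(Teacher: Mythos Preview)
Your proposal is correct and follows essentially the same approach as the paper: the lemma is stated there as a summary of the preceding discussion (``We have shown:''), and you have faithfully assembled that discussion---normalizing $h$ to be trivial, identifying the conjugation by $g$, recognizing the refinement $A_1^g = A*C$ as an $S$-transformation, and interpreting the rebracketing as a compatible index change. Your added converse direction and well-definedness remarks are not in the paper's text but are natural and correct completions of the argument.
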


An edge path $p$ from the vertex represented by $(\bx,S_0)$ in
$\ES$ is described by a sequence:  a conjugation $\gamma_0$ of
$\bx$, an $S_0$-transformation $\phi_0$, a change of index set to
$S_1$, a conjugation $\gamma_1$ of $\bx^{\gamma_0}\phi_0$, an
$S_1$-transformation $\phi_1$, a change of index set to $S_2$, etc.

On such an edge path $p$, for any $i$, let $(\bx(i),S_i)$ be a
representative of the vertex on the edge path immediately before
$\phi_i$ is to be applied. Then by construction we have

\[\bx(i)=
\bigl(\ldots\bigl((\bx^{\gamma_0}\phi_0)^{\gamma_1}\phi_1\bigr)^{\gamma_2}\ldots\bigr)^{\gamma_{i-1}}\phi_{i-1}=
(\bx \phi_0 \phi_1 \dots
\phi_{i-1})^{\bigl(\ldots\bigl((\gamma_0)\phi_0\gamma_1\bigr)\phi_1\gamma_2\ldots\bigr)\phi_{i-1}}.\]
As vertices of $\ES$ are only defined up to conjugation, we may
assume without loss of generality that all of the conjugators
$\gamma_i$ are trivial and
\[\bx(i) = \bx \phi_0 \phi_1 \dots
\phi_{i-1}.\] The set $S_i$ is not determined uniquely by $\phi_i$,
as $\phi_i$ may be an $S$-transformation for many index sets $S$.
However, for any such $S$, the vertex $(\bx(i),S)$ is of distance at
most $2$ away from each of the vertices $(\bx(i-1),S_{i-1})$,
$(\bx(i),S_i)$, and $(\bx(i+1),S_{i+1})$ in $\ES$, as follows.
That $(\bx(i),S)$ is distance at most $2$ from $(\bx(i-1),S_{i-1})$
follows from applying $\phi_{i-1}$ to $(\bx(i-1),S_{i-1})$ and then
changing the index set to $S$, which requires one edge in $\ES$
if $S_{i-1}$ and $S$ are compatible and 2 edges otherwise. That
$(\bx,S)$ is distance at most $2$ from $(\bx(i),S_i)$ follows from
applying the identity transformation to $(\bx(i),S_i)$ (note the
identity transformation is indeed an $S_i$-transformation) and then
changing index set to $S_i$.  Finally, for the vertex
$(\bx(i+1),S_{i+1})$, since $\phi_i$ is an $S$-transformation,
$(\bx(i+1),S_{i+1})$ is the vertex obtained from $(\bx,S)$ by
applying $\phi_i$ to $\bx(i)$ and then changing the index set to
$S_{i+1}$.

Thus, up to distance $2$ at every vertex on the path $p$, the path
$p$ is determined by the sequence of transformations $\phi_0,
\phi_1, \dots, \phi_k$ (see Figure \ref{fig:ladder}).  Note that we
may reverse this procedure: take a sequence of transformations
$\phi_0, \phi_1, \dots, \phi_k$ such that each $\phi_i$ is an
$\mathcal{S}$-transformation, choose any $S_i' \in \mathcal{S}$ such
that $\phi_i$ is an $S_i'$-transformation, and obtain an edge path
in $\ES$, which is uniquely defined up to distance $2$ at each
vertex.

\begin{figure}

\hspace{-.5in}
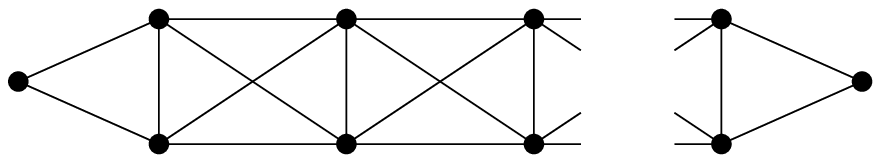

\caption{Shown are two edge paths from the vertex $(\bx,S_0)$ to the
vertex $(\bx\phi,S_k)$ in $\ES$ using the notation of this
section.  Here, we let $\Phi_i := \phi_0\dots \phi_{i-1}$ denote the
composition of $\mathcal{S}$-transformations, where $\phi_i$ is an
$S_{i-1}$-transformation.  Thus, $\bx(i) = \bx\Phi_i =
\bx\phi_0\dots \phi_{i-1}$. The lower path represents the edge path
described in the text, and is represented by the sequence of
transformations $\phi_0, \dots, \phi_k$. The upper path represents
an edge path reconstructed from the $\mathcal{S}$-transformations
$\phi_0, \dots, \phi_k$ by, for each $i = 1, \dots, k-1$, choosing
an arbitrary index set $S_i'$ compatible with $S_{i-1}'$ such that
$\phi_i$ is an $S_i'$-transformation.  Horizontal edges in the
figure are edges in $\ES$, and vertical and diagonal edges mean
that the distance between two vertices in $\ES$ is at most
2.}\label{fig:ladder}

\end{figure}

A geodesic in $\ES$ is then easy to describe.  A geodesic, up to
distance 2 at each vertex, is an edge path $\phi_0, \phi_1, \dots,
\phi_k$ such that the transformation $\phi = \phi_0\phi_1\dots
\phi_k$ is not a product of fewer than $k+1$
$\mathcal{S}$-transformations with the property that the neighboring
transformations are $\mathcal S$-transformations with respect to
compatible index sets.

For a given word $w$ in the generating set for $\Aut$ consisting of
elementary Nielsen transformations and the identity transformation,
we say that $w$ has \emph{at most $k$ index changes} if $w$ may be
expressed as a product of $k+1$ disjoint subwords, each of which is
an $\mathcal{S}$-transformation and the neighboring subwords are
$\mathcal{S}$-transformations with respect to compatible index sets.
If $k$ is minimal over all such products, we say $w$ \emph{requires
$k$ index changes}.  Since the product of $S$-transformations is an
$S$-transformation, we can rephrase the preceding paragraph in the
form of the following Theorem.

\begin{theorem}\label{thm:geodesics}
A geodesic in $\ES$ is represented by a product of $\mathcal
S$-transformations with the minimal number of index changes.
Moreover, a geodesic of length $k$ in $\ES$ requires from $k-4$,
to $k$ index changes.
\end{theorem}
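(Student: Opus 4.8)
The plan is to upgrade the discussion immediately preceding the theorem to a quantitative statement. That discussion already identifies an edge path (up to distance $2$ at each vertex) with a sequence of $\mathcal{S}$-transformations and with the corresponding product $\phi$ of them; what remains is to pin down how far the \emph{number of index changes $\phi$ requires} can drift from the length of a geodesic realising it. I would read the first sentence of the theorem as the informal form of the ``moreover'' clause and prove the latter, splitting into an easy upper bound and a more delicate lower bound.

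First I would fix a geodesic $\gamma$ of length $k$ from $v$ to $v'$ and, using the earlier reduction that the conjugators along an edge path may be taken trivial, write $v = (\bx,S_0)$, $v' = (\bx\phi,S_k)$ with $\phi = \phi_0\cdots\phi_{k-1}$ the product of the $\mathcal{S}$-transformations read off the $k$ edges of $\gamma$, where $\phi_{i-1}$ is an $S_{i-1}$-transformation, $S_{i-1}$ is compatible with $S_i$, and (this is the point with no slack) $S_0,S_k$ are genuinely the index sets of $v,v'$. The \emph{upper bound} is then immediate: this very factorisation presents $\phi$ as a product of $k$ $\mathcal{S}$-transformation subwords with compatible neighbouring index sets, so $\phi$ requires at most $k$ index changes.

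For the \emph{lower bound} I would take a factorisation $\phi = \psi_0\cdots\psi_m$ witnessing that $\phi$ requires $m$ index changes, with $\psi_j$ a $T_j$-transformation and $T_j$ compatible with $T_{j+1}$, and run the ``reverse procedure'' of the discussion above to turn it into an edge path. Applying $\psi_0,\dots,\psi_{m-1}$ successively from $(\bx,T_0)$, changing the index set along $T_0\to T_1\to\cdots\to T_m$, costs $m$ edges. The delicate step is the tail: instead of applying $\psi_m$ and then separately repairing the index set to $S_k$ (which would cost $1+2$ edges), I would apply $\psi_m$ while changing the index set to an index set $U$ chosen compatible with \emph{both} $T_m$ and $S_k$ --- such $U$ exists because $n>2$, the same fact that puts any two vertices of $\ES$ with a common first coordinate at distance $\le 2$ --- and then change $U$ to $S_k$ in one more edge. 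This produces an edge path of length $m+2$ from $(\bx,T_0)$ to $v'$; since $(\bx,T_0)$ is within distance $2$ of $v$, the triangle inequality gives $k \le 2+(m+2) = m+4$, so $\phi$ requires at least $k-4$ index changes.

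Combining the two bounds yields the ``moreover'' clause. For the first sentence, the read-off $\phi = \phi_0\cdots\phi_{k-1}$ from a geodesic is a product of $\mathcal{S}$-transformations whose number of index changes ($\le k$) lies within an additive constant of the minimum possible ($\ge k-4$), and the reverse construction above shows conversely that a truly minimal-index-change product pushes back to an edge path within $4$ of geodesic --- which is the sense in which a geodesic is ``represented by a product of $\mathcal{S}$-transformations with the minimal number of index changes''. I expect the only real obstacle to be exactly this endpoint bookkeeping in the lower bound: keeping each of the two index-set adjustments (one at the start, one bundled into the final transformation at the tail) down to at most two edges is what makes the additive slack $4$ rather than something larger, and it is the only place where the argument is not a routine transcription of the preceding discussion.
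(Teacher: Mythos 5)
Your proof is correct and follows the paper's intended argument. The paper presents Theorem~\ref{thm:geodesics} as a restatement of the preceding edge-path/factorisation discussion and its ``distance at most $2$ at each vertex'' slack, without spelling out the endpoint bookkeeping; your explicit accounting of the $2+2$ slack at the two ends---in particular, bundling the final index-set adjustment into the application of $\psi_m$ so as not to lose an extra edge, and using that for $n>2$ any two index sets share a common compatible one---is exactly the arithmetic that produces the additive constant $4$.
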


We will use this characterization to describe lower bounds on
distances in $\ES$ based on properties of the associated
transformations in Section \ref{sec:ilength}.

We end this section by noting that there is a similar
characterization of roses in the spine $K_n$ of outer space as
tuples, up to conjugation and signed permutation (the signed
permutations correspond to graph isomorphisms).  With this
interpretation, there are canonical Lipschitz maps from $K_n$ to
$\ES$ to $\FFZ$.  It is also worth noting that the
quasiisometry between $\Out$ and $K_n$ may be stated in this
language:  Let $K_n'$ be the graph whose vertices are the marked
roses of $K_n$ and whose edges correspond to marked roses lying on a
common $2$-cell in $K_n$.   Then $K_n$ is 2-biLipschitz equivalent
to the graph $K_n'$, and $K_n'$ is biLipschitz equivalent to the
Cayley graph of $\Out$ with respect to the generating set of
elementary Whitehead transformations:  $K_n'$ is the Schreier graph
of $\Out$ with respect to this generating set and the finite
subgroup of signed permutations.

%%%%%%%%%%%%%%%%%%%%%%%%%%%%%%%%%%%%%%%%%%%%%%%%%%
\section{The Notion of $i$-Length}\label{sec:ilength}

In this section, we define the notion of \emph{$i$-length} and
analyze its properties.  This notion is an algebraic tool that will
be used to estimate distances in $\ES$.  We use the concept of
$i$-length to refer to a measure of complexity of 3 different kinds
of objects:  abstract words in the generators of $\F$, subwords of
bases of $\F$, and bases of $\F$ themselves.  Our 3 concepts of
$i$-length are:  \emph{simple} $i$-length, \emph{conjugate reduced}
$i$-length, and \emph{full} $i$-length, respectively.  We use simple
$i$-length to define conjugate reduced $i$-length, and conjugate
reduced $i$-length to define full $i$-length.  After defining the
three notions of $i$-length, we will analyze the properties of each
in turn.

Throughout this section, we fix a standard basis $\ba = \{a_1, \dots, a_n\}$ of $\F$ once and for all.

%%%%%%%%%%%%%%%%%%%%%%%%%
\subsection{Defining $i$-Length}~\\
We motivate our definition of $i$-length with an example.

Let $H := \la a_1, \dots, a_{n-1}\ra$ denote the subgroup of $\F$ of
rank $n-1$ corresponding to ignoring the generator $a_n$.  Consider
the vertex $v_0 := [H * \la a_n \ra]$ as a basepoint in $\ES$,
and think about moving in $\ES$ to the vertex $v = [H * \la a_n h
\ra]$, where $h$ is an arbitrary element of $H$.  Let $d$ denote the
distance between $v_0$ and $v$ in $\ES$.

If $h$ is nontrivial, then $v \neq v_0$, as there is clearly no way
of using conjugation to remove occurrences of all elements of $H$
from the second factor of any representative of $v$. Moreover, as
$v_0$ and $v$ both have the same index set, by Theorem
\ref{thm:geodesics}, when $h$ is nontrivial we have $d \geq 2$.

If $h$ is a primitive element in $H$, then $d = 2$, as follows. Let
$h_2, \dots, h_{n-1}$ denote elements of $H$ such that $\{h, h_2,
\dots, h_{n-1}\}$ forms a basis for $H$.  Then $\la h, h_2, \dots,
h_{n-1} \ra * \la a_n \ra$ is a representative of $v_0$, and $\la h,
h_2, \dots, h_{n-1} \ra * \la a_n h \ra$ is a representative of $v$.
Thus, $[\la h_2, \dots, h_{n-1} \ra * \la h, a_n \ra]$ is a vertex
which is adjacent to both $v$ and $v_0$.

\begin{figure}[!h]
\epsfig{file=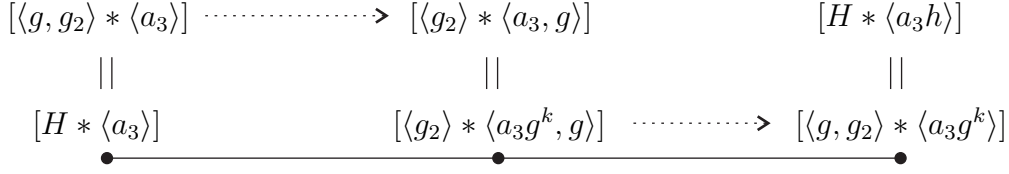} \caption{A path of length 2
in $\ES[3]$.} \label{fig:dist2}
\end{figure}

If $h$ is a power of a primitive element in $H$, the same argument
again shows that $d = 2$.  Figure~\ref{fig:dist2} shows the path of
length 2 connecting $[H * \la a_n \ra]$ and $[H * \la a_n h \ra]$
for $n=3$, $h=g^k$, where $g$ is primitive in $H$ and $g_2$ is some
coprimitive with $g$ element such that $\la g,g_2\ra=H$. Repeating
the above argument shows that, if we know that $h$ is a product of
$j$ powers of primitive elements in $H$, then $d \leq 2j$.  To
obtain a lower bound on $d$, we need to at least minimize $j$. Thus,
we need to consider how to detect how many powers of primitives are
needed to form $h$.

One property of a (power of a) primitive element $h$ of $H$ is a
classical result of Whitehead, which states that the \emph{Whitehead
graph} of $h$, considered as a reduced word in the alphabet $(\ba -
\{a_n\})^{\pm 1}$, must have a \emph{cut vertex}, defined as
follows.

\begin{definition}[Whitehead graph]
For a set of freely reduced words $\bx = \{x_1, \dots, x_k\}$ in the
alphabet $\ba \cup \ba^{-1}$, define the \emph{Whitehead graph}
$\Gamma_\ba(\bx)$ as follows.  The set of vertices
of $\Gamma_\ba(\bx)$ is identified with the set $\ba \cup \ba^{-1}$.  For
every $x_i \in \bx$ of length $n$, $x_i$ contributes exactly $n-1$
edges to $\Gamma_\ba(\bx)$, one for each pair of consecutive letters in
$x_i$.  The edge added for a $a_ia_j$ is from the vertex $a_i$ to
the vertex $a_j^{-1}$.  The \emph{augmented Whitehead graph}
$\hat\Gamma_\ba(\bx)$ is the Whitehead graph $\Gamma_\ba(\bx)$
together with an additional edge for each $x_i \in \bx$, from the
last letter of $x_i$ to the inverse of the first letter.  In
particular, a word $x_i = a_j$ of length 1 contributes exactly one
edge, from $a_j$ to $a_j^{-1}$, to $\hat\Gamma_\ba(\bx)$.  For a
single word $w$, we abuse notation and write $\Gamma_\ba(w)$ for
$\Gamma_\ba(\{w\})$ and $\hat\Gamma_\ba(w)$ for $\hat\Gamma_\ba(\{w\})$.
\end{definition}

If $\bx$ is cyclically reduced and linearly independent, then the Whitehead graph of a set of freely reduced words $\bx$ is graph-isomorphic to the link of the unique vertex in the presentation 2-complex of the group $\F/\la\la\bx\ra\ra$ generated by $a_1, \dots, a_n$ with relations $x_1, \dots, x_k$.

Note that a Whitehead graph (or augmented Whitehead graph) may have
multiple edges. Loops at a vertex may appear only in an augmented Whitehead
graph and if and only if at least one of the words in $\bx$ is not
cyclically reduced. An example of the augmented Whitehead graph,
namely $\hat\Gamma_{\{a_1,a_2,a_3,a_4\}}(a_2^2a_3^2a_4^2)$, is shown
in Figure~\ref{fig:whitehead}.

\begin{figure}[!h]
\epsfig{file=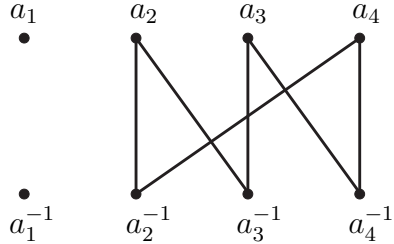}
\caption{Augmented Whitehead graph $\hat\Gamma_{\{a_1,a_2,a_3,a_4\}}(a_2^2a_3^2a_4^2)$}
\label{fig:whitehead}
\end{figure}

\begin{definition}[cut vertex]
A \emph{cut vertex} $v$ of a graph $\Gamma$ is a vertex such that $\Gamma = \Gamma_1 \cup \Gamma_2$, where $\Gamma_1$ and $\Gamma_2$ are nonempty subgraphs and $\Gamma_1 \cap \Gamma_2 = \{v\}$.  If $\Gamma$ is disconnected, then all of its vertices are cut vertices.
\end{definition}

Whitehead proved \cite{whitehead:graph} that the augmented Whitehead graph of a basis of a free group has a cut vertex.  Note that a power of a primitive has the same augmented Whitehead graph as the given primitive, so the augmented Whitehead graph of a power of a primitive must also have a cut vertex.  The converse is, of course, not true -- for example, $aba^3b$ is not primitive in $F_2$ -- but of course the contrapositive is:  having an augmented Whitehead graph with no cut vertex implies the element is not a primitive or a power of a primitive.

For our purposes, we will need a generalization of Whitehead's theorem due to Stallings \cite{stallings:handlebodies}, so we state it now.  A subset $S$ of $\F$ is called \emph{separable} if there is a free factorization of $\F$ with two factors such that each element of $S$ can be conjugated into one of the factors.  In particular, a set is separable if its elements can be conjugated (possibly by different conjugators) to the elements of some basis of $\F$.  Thus, a basis (and the cyclic reduction of a basis) is always separable.

\begin{theorem}[\cite{stallings:handlebodies}]\label{thm:cutvertex}
If $\bx$ is a separable set in $\F$, then there is a cut vertex in $\hat\Gamma_\ba(\bx)$.
\end{theorem}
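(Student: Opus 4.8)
The plan is to reduce the statement to Whitehead's theorem applied to a cleverly chosen basis, exploiting the fact that separability is exactly the condition that lets us absorb the conjugators into a single basis-completion step. First I would unpack the definition: if $\bx = \{x_1, \dots, x_k\}$ is separable, there is a free factorization $\F = P * Q$ and elements $g_i, h_j$ such that each $x_i$ conjugates into $P$ or $Q$. Reordering, say $x_1^{g_1}, \dots, x_m^{g_m} \in P$ and $x_{m+1}^{g_{m+1}}, \dots, x_k^{g_k} \in Q$. The key reduction is to produce a single basis $\by$ of $\F$ whose augmented Whitehead graph (with respect to $\ba$) \emph{contains} $\hat\Gamma_\ba(\bx)$ as a subgraph on the same vertex set in a way that preserves cut vertices — then Whitehead's theorem gives a cut vertex of $\hat\Gamma_\ba(\by)$, and one must argue that this same vertex (or some vertex) is still a cut vertex of the subgraph $\hat\Gamma_\ba(\bx)$.

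That last implication is false in general (a subgraph of a graph with a cut vertex need not have one), so the real work is to arrange that $\hat\Gamma_\ba(\by)$ is built from $\hat\Gamma_\ba(\bx)$ together with pieces that are glued on \emph{at} an existing cut vertex, so that the cut structure descends. Concretely, I would proceed in two stages. Stage one: handle conjugacy. For each factor, say $P$, the subset $\{x_1^{g_1}, \dots, x_m^{g_m}\} \subseteq P$ extends to a basis of $P$ by an inner automorphism argument; more precisely, I would first show it suffices to treat the case where the $x_i$ are already elements of the factors — by the standard fact that the augmented (cyclic) Whitehead graph is a conjugacy invariant of each word (cyclically reducing $x_i^{g_i}$ does not change $\hat\Gamma_\ba$), so replace each $x_i^{g_i}$ by its cyclic reduction. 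Stage two: the union $\{x_1^{g_1}, \dots, x_m^{g_m}\}$ lies in $P$ and is contained in some basis $\{y_1, \dots, y_p\}$ of $P$ (here I use that a subset of a free group that is part of a free factorization decomposition can be completed — actually I should be careful: the $x_i^{g_i}$ need not be a basis of their span, but they are separable, and I can pass to a basis of $P$ together with extra words; since adding words only adds edges to the Whitehead graph, and we want a \emph{sub}graph relation in the other direction, I instead want $\hat\Gamma_\ba(\bx) \subseteq \hat\Gamma_\ba(\by)$ where $\by$ is a basis of $\F$). Then $\by := \{y_1, \dots, y_p\} \cup \{z_1, \dots, z_{n-p}\}$, with the $z_j$ a basis of $Q$ containing the cyclically reduced $x_j^{g_j}$, is a basis of $\F$, and $\hat\Gamma_\ba(\bx)$ is a subgraph of $\hat\Gamma_\ba(\by)$ on the vertex set $\ba \cup \ba^{-1}$.

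The hard part — and the main obstacle — is exactly the descent of the cut vertex from $\hat\Gamma_\ba(\by)$ to the subgraph $\hat\Gamma_\ba(\bx)$. I expect this requires not just "subgraph" but a structural statement: one should choose $\by$ so that the edges of $\hat\Gamma_\ba(\by) \setminus \hat\Gamma_\ba(\bx)$ come from words lying in a \emph{complementary} free factor, and then invoke that Whitehead's cut vertex for a disconnected-or-separable collection respects the factor decomposition — indeed Stallings' theorem is often proved directly by showing the factorization $\F = P * Q$ itself forces a "separating" structure on the disk/handlebody picture, giving a vertex of the Whitehead graph through which all "crossing" edges must pass. So an alternative (and probably cleaner) plan is to forgo the reduction to Whitehead entirely and argue topologically/combinatorially: represent $\bx$ in a handlebody of genus $n$, realize the free factorization $P * Q$ by an essential properly embedded disk $D$ separating the handlebody, isotope the loops representing $\bx$ to meet the standard system of $n$ meridian disks minimally, and read off that the vertex of $\hat\Gamma_\ba(\bx)$ corresponding to (a side of) $D$'s boundary curve is a cut vertex because every arc of a loop in the complement of the meridians that "switches sides" of $D$ is forbidden by minimality. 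I would attempt the Whitehead-reduction first since it reuses machinery already cited, falling back to the handlebody argument of Stallings if the subgraph-descent cannot be made to work cleanly; in either case, I would also need the degenerate clause of the cut-vertex definition (if $\hat\Gamma_\ba(\bx)$ is disconnected every vertex is a cut vertex) to absorb the case where $\bx$ uses only some of the generators $\ba$.
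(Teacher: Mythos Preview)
This theorem is not proved in the paper; it is quoted from Stallings \cite{stallings:handlebodies} and used as a black box, so there is no in-paper argument to compare against.

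On your proposal itself: the first route (extend $\bx$ to a basis $\by$ and descend the cut vertex from $\hat\Gamma_\ba(\by)$) has exactly the gap you already flag, and your suggested patch does not close it. You have no control over which pairs of letters of $\ba^{\pm1}$ occur consecutively in a basis of the complementary factor $Q$ when that basis is written over $\ba$, so the extra edges of $\hat\Gamma_\ba(\by)\setminus\hat\Gamma_\ba(\bx)$ need not attach at a single vertex, and nothing forces the cut vertex of $\hat\Gamma_\ba(\by)$ to survive in the subgraph. There is also a smaller slip: cyclically reducing $x_i^{g_i}$ yields a cyclic permutation of $\tilde x_i$, whose augmented Whitehead graph is $\hat\Gamma_\ba(\tilde x_i)$, not $\hat\Gamma_\ba(x_i)$; since the statement concerns the given $\bx$, this replacement is only free when $\bx$ is already cyclically reduced (which is the only case the paper ever applies, but not what is asserted).

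The handlebody route is the correct one --- it is Stallings' own --- but one step in your sketch is wrong. The disk $D$ realizing the splitting $P*Q$ is in general \emph{not} one of the meridian disks of the standard system for $\ba$, so there is no vertex of $\hat\Gamma_\ba(\bx)$ ``corresponding to a side of $D$'s boundary curve''. Stallings' argument instead puts $D$ in minimal position with respect to the standard disk system and looks at an outermost arc of the intersection lying on some standard meridian $D_j$; the subdisk of $D$ it cuts off has its other boundary arc on $\partial H$ in the planar complement of the standard disks, and since the curves for $\bx$ miss $D$ they cannot cross this arc. That arc then separates the planar picture so that one side of $D_j$ --- which \emph{is} a vertex of $\hat\Gamma_\ba(\bx)$ --- is a cut vertex. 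So the cut vertex is found among $\ba^{\pm1}$ via the intersection pattern of $D$ with the standard disks, not read off from $D$ directly.
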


Now consider our motivating example of the distance $d$ between $v_0 = [H * \la a_n \ra]$ and $v = [H * \la a_n h \ra]$ in $\ES$.  Na\"ively, we could hope that if we could break up $h$, considered as a reduced word, into $k$ subwords such that each subword had an augmented Whitehead graph with no cut vertex, then $d$ might be bounded from below by a function of $k$.  However, it may not be the case that such a decomposition of $h$ `breaks' $h$ in the places corresponding to the most efficient way of decomposing it as a product of powers of primitives:  a given primitive might contribute to one or more of the subwords.  But Whitehead's theorem does \emph{not} say that the (augmented) Whitehead graph of any subword of a primitive will have a cut vertex.  Indeed, a primitive element conjugated by an arbitrary word will still be primitive, and the only reason its augmented Whitehead graph will have a cut vertex will be from the single self-loop contributed by the last and first letters.  If the primitive element is cyclically reduced, then we may claim that the (non-augmented or augmented) Whitehead graph of any subword will have a cut vertex, but not otherwise.

The notions of $i$-length are defined precisely to deal with this delicate effect of conjugation.  Simple $i$-length ignores
conjugation completely, looking only at the non-augmented Whitehead graph of a word and its subwords.  Conjugate reduced $i$-length takes all possible conjugations of the subwords of a word into account.  Full $i$-length then uses conjugate reduced $i$-length to measure the complexity of an entire basis.

We are almost ready to give the definitions of $i$-length, but we need one minor piece of notation to proceed.

\begin{notation}
Elements of $\F$ are equivalence classes of words in the alphabet
$\ba \cup \ba^{-1}$ under free reduction.  For two words $w_1$ and
$w_2$ in this alphabet, we write $w_1 = w_2$ if they are equal as
words, and $w_1 =_r w_2$ if they are equal after free reduction,
i.e. as elements of $\F$.
\end{notation}

We now define the 3 notions of $i$-length.  The definition of full
$i$-length is somewhat nuanced, but the concept is based on simple
$i$-length.  The idea of simple $i$-length is straightforward:  it
records the maximal number of pieces a word can be broken into such
that the Whitehead graph of each piece has no cut vertex.

\begin{definition}[Simple $i$-length]
Fix an index $i \in \{1, \dots, n\}$.  Let $w$ be a word which
contains no occurrence of $a_i^{\pm 1}$.  The \emph{simple
$i$-length} of $w$, denoted $|w|_i^{simple}$, is the greatest number
$t$ such that $w$ is of the form $w_1w_2\dots w_t$, where
$\Gamma_{\ba-\{a_i\}}(w_j)$ has no cut vertex for each $j = 1, \dots, t$.  If $\Gamma_{\ba-\{a_i\}}(w)$ has a cut vertex, we define
$|w|_i^{simple}$ to be zero.
\end{definition}

It worth pointing out that in the above definition we use standard
Whitehead graph, not the augmented one.

Conjugate reduced $i$-length additionally takes conjugation into
account.

\begin{definition}[Conjugate reduced $i$-length]
Fix an index $i \in \{1, \dots, n\}$.  Let $w$ be a word which
contains no occurrence of $a_i^{\pm 1}$ (thought of as a subword of
another word in the alphabet $\ba^{\pm 1}$).  Then $w$ has
\emph{conjugate reduced $i$-length at most $k$} if there exist
freely reduced words $v_1, \dots, v_l, u_1, \dots, u_l$ such that:
\begin{enumerate}
\item $w =_r v_1^{u_1}v_2^{u_2}\dots v_l^{u_l}$, where $v_j^{u_j} := u_j^{-1}v_ju_j$, and
\item $k = (l-1) + |v_1|_i^{simple} + \dots + |v_l|_i^{simple}$.
\end{enumerate}
The decomposition of $w$ as $v_1^{u_1}v_2^{u_2}\dots v_l^{u_l}$ is
called a \emph{decomposition}, and $k$ is the \emph{conjugate
reduced $i$-length associated to the decomposition}.  If the
associated $k$ is minimal among all such decompositions, the
decomposition is called \emph{optimal}, and $k$ is called a
conjugate reduced $i$-length of $w$ and denoted by $|w|_i^{cr}$. The
number $l$ of factors of the form $v_j^{u_j}$ in the decomposition
is called the \emph{factor length} of the decomposition.
\end{definition}

We are now ready to define the (full) $i$-length of a basis for $\F$
(or more generally a set of words).  Given a basis $Y$, we
essentially measure the maximal conjugate reduced $i$-length of any
subword of any element of $Y$.  However, we must be very careful to
properly account for conjugation.  We do so as follows.

Let $\by$ be a set of reduced words in the alphabet $\ba^{\pm 1}$.
Let $\tilde{\by}$ denote the set of elements of $\by$ after each of
them has been cyclically reduced.  Define $w_L = w_L(\by)$ to be the
longest word in the alphabet $(\ba - \{a_i\})^{\pm 1}$ such that
every occurrence of $a_i$ in every $\tilde{y} \in \tilde{\by}$ is
cyclically preceded by $w_L$ and every occurrence of $a_i^{-1}$ is
cyclically followed by $w_L^{-1}$ (note $w_L$ could be trivial).
Similarly, let $w_R = w_R(\by)$ be the longest word in $(\ba -
\{a_i\})^{\pm 1}$ such that every occurrence of $a_i$ in every
$\tilde{y}\in \tilde{\by}$ is cyclically followed by $w_R$, every
occurrence of $a_i^{-1}$ is cyclically preceded by $w_R^{-1}$, and
no such occurrence of $w_R$ intersects any such occurrence of $w_L$
(again, $w_R$ could be trivial).  Let $\alpha' = \alpha'_\by$ be the
automorphism of $\F$ which maps $a_i$ to $w_L^{-1}a_iw_R^{-1}$. Let
$w_C = w_C(\by)$ be the longest word in $(\ba - \{a_i\})^{\pm 1}$
such that, in $\alpha' \tilde{\by}$, every occurrence of $a_i^k$
either: (a) occurs by itself as an element of $\alpha'\tilde{\by}$ or (b) appears cyclically conjugated by $w_C$, so that $a_i^k$ is cyclically preceded by $w_C^{-1}$ and cyclically followed by $w_C$.  If every
occurrence of $a_i^k$ occurs by itself, we declare that $w_C$ is
trivial.

If $\by$ is a singleton $\{y\}$, we abuse notation and write $y$
instead of $\{y\}$ when applying any function in this subsection.

Let $\alpha = \alpha_\by$ be the automorphism of $\F$ which maps
$a_i$ to $w_L^{-1}w_Ca_iw_C^{-1}w_R^{-1}$.  Thus, $w_L(\alpha \by) =
w_R(\alpha \by) = 1$ are trivial.  The preimage of $a_i^k$ under
$\alpha$, after free reduction, is $w_C^{-1}~(w_La_iw_R)^k~w_C$.
Note that $w_C^{-1}$ and $w_C$ canceled between adjacent occurrences
of $a_i$, but that this is the only free cancelation which occurs in
$\alpha^{-1}(\alpha \by)$.  Note $\alpha \by$ may not be cyclically reduced.

An \emph{$i$-chunk} of a word $y$ in the alphabet $\ba^{\pm 1}$ is a
cyclic subword of $\tilde{y}$ (here again, $\tilde y$ denotes the
result of a cyclic reduction of $y$) which contains no $a_i^{\pm 1}$
and is maximal among such subwords ordered by inclusion.  By
definition, every $i$-chunk of $y$ begins with either $\wR{y}$ or
$\left(\wL{y}\right)^{-1}$, and ends with either $\wL{y}$ or
$\left(\wR{y}\right)^{-1}$.

For example, in the set $\by = \{a_2^{-1}a_3a_4a_1a_4a_2a_3,~
a_4^{-1}a_1^{-1}a_4^{-1}a_3^{-1}\}$, we have $\tilde{\by} = \by$.
For $i = 1$, $w_L(\by) = a_3a_4$, $w_R(\by) = a_4$, and $w_C(\by) =
a_2$.  Thus, $\alpha(a_1) = a_4^{-1}a_3^{-1}a_2a_1a_2^{-1}a_4^{-1}$,
so that $\alpha(\by) = \{a_1a_3,~ a_2a_1^{-1}a_2^{-1}\}$.

\begin{definition}[Full $i$-length]
Fix an index $i \in \{1, \dots, n\}$.  Let $\by$ be a set of words in the alphabet $\ba^{\pm 1}$.  The (full) \emph{$i$-length} of $\by$ is
    $$|\by|_i := k(\by) + |\wR{\by}\wL{\by}|_i^{cr},$$
where $k(\by)$ is the maximal conjugate reduced $i$-length of an
$i$-chunk of $\alpha_{\by} y$ over all elements $y \in \by$.
\end{definition}

For example, let $w = a_1^2a_2^2\dots a_{n-1}^2a_1$.  Then
$|w|_n^{simple} = |w|_n^{cr} = 1$, and $|a_nw|_n = 1$.  We will
later see (in Corollary \ref{cor:arbitrarylength}) that $|a_nw^l|_n
\geq l/3-2$.

%%%%%%%%%%%%%%%%%%%%%%%%%
\subsection{Properties of Simple $i$-length}~\\
This subsection includes three simple lemmas that we will use in further proofs.

\begin{lemma}
\label{lem:simple_length}
Let $w$ be a freely reduced word in $\F$ which contains no occurrence of $a_i^{\pm 1}$ and let $u$ and $v$ be disjoint subwords of $w$. Then
    \[|w|_i^{simple}\geq |u|_i^{simple}+|v|_i^{simple}.\]
\end{lemma}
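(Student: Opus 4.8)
The plan is to exploit the definition of simple $i$-length directly: $|u|_i^{simple}$ (resp. $|v|_i^{simple}$) is the maximal number of consecutive pieces into which $u$ (resp. $v$) can be cut so that each piece has a cut-vertex-free Whitehead graph over $\ba - \{a_i\}$. Write $s := |u|_i^{simple}$ and $t := |v|_i^{simple}$. If either of these is $0$ there is nothing to prove (the right-hand side is at most the other term, which is a valid decomposition count for $w$ — indeed one must still check that a single factor with a cut vertex does not hurt; but when $s = 0$ we simply use the decomposition witnessing $|w|_i^{simple}$ restricted through $u$, or more carefully, observe the inequality degenerates). So assume $s, t \geq 1$. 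Fix decompositions $u = u_1 u_2 \cdots u_s$ and $v = v_1 v_2 \cdots v_t$ realizing these values, so each $\Gamma_{\ba - \{a_i\}}(u_p)$ and each $\Gamma_{\ba - \{a_i\}}(v_q)$ has no cut vertex.

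Next I would use that $u$ and $v$ are disjoint subwords of $w$ to write $w = w' u w'' v w'''$ (or with $u$ and $v$ in the opposite order — the argument is symmetric, so assume this order without loss of generality), where $w', w'', w'''$ are (possibly empty) subwords. Then
\[
w = w'\, u_1 u_2 \cdots u_{s-1}\, (u_s w'' v_1)\, v_2 \cdots v_t\, w'''
\]
is a decomposition of $w$ into $1 + (s-1) + 1 + (t-1) + 1 = s + t + 1$ consecutive pieces when $w', w'''$ are both nonempty (and fewer boundary pieces otherwise). The key point is that among these pieces, the $s-1$ pieces $u_1, \dots, u_{s-1}$ and the $t-1$ pieces $v_2, \dots, v_t$ already have cut-vertex-free Whitehead graphs. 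Discard the pieces $w'$, $u_s w'' v_1$, $v_2$... no — better: I would instead merge the "junk" pieces $w'$, $u_s w'' v_1 v_2 \cdots$ — let me restate the clean version. Group $w$ as
\[
w = \bigl(w' u_1\bigr)\, u_2 \cdots u_{s-1}\, \bigl(u_s w'' v_1\bigr)\, v_2 \cdots v_{t-1}\, \bigl(v_t w'''\bigr),
\]
giving at least $s + t - 1$ pieces, of which $u_2, \dots, u_{s-1}$ (that is $s-2$ pieces) and $v_2, \dots, v_{t-1}$ ($t-2$ pieces) are cut-vertex-free. This only gives a bound of $s + t - 3$, which is too weak. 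The fix is: a cut-vertex-free graph stays cut-vertex-free after adding more edges and vertices that keep it connected with no new cut vertex — but that is false in general, so I must not merge cut-vertex-free pieces with junk. Instead, keep the full decomposition $w = w'\, u_1 \cdots u_s\, w''\, v_1 \cdots v_t\, w'''$ and note that $u_1, \dots, u_s, v_1, \dots, v_t$ are $s + t$ consecutive pieces each with no cut vertex; the leftover letters of $w', w'', w'''$ can each be prepended to the following piece only if doing so preserves the no-cut-vertex property, which need not hold. So the honest statement is: $w$ admits a decomposition with at least $s + t$ pieces that are cut-vertex-free, namely $u_1, \dots, u_s, v_1, \dots, v_t$, by simply declaring $w', w''$ (appended appropriately), $w'''$ to be absorbed — and here is the real resolution: since $|w|_i^{simple}$ is the \emph{greatest} $t$ over \emph{all} decompositions into cut-vertex-free pieces, and a decomposition need not use up every letter in a "good" piece — wait, a decomposition $w = w_1 \cdots w_t$ must have $w_1 \cdots w_t = w$ as words, so every letter is used. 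Thus I genuinely need to place the junk letters of $w', w'', w'''$ somewhere.

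The clean resolution, and the main obstacle to address: letters of $w'$ can be prepended to $u_1$; letters of $w'''$ appended to $v_t$; letters of $w''$ split between $u_s$ and $v_1$. A word with a cut vertex is "bad", and I want to argue I can always grow a bad boundary region without destroying the goodness of the interior good pieces — which is automatic, since the interior pieces $u_2, \dots, u_{s-1}, v_2, \dots, v_{t-1}$ are untouched. That gives $w = (w' u_1 u_s\text{-stuff})$... I will settle it as follows: take the decomposition $w = \underbrace{(w' u_1)}_{\text{1 piece}}\, u_2 \cdots u_s\, \underbrace{(w'' v_1)}_{\text{1 piece}}\, v_2 \cdots v_t\, \underbrace{w'''}_{\text{1 piece, possibly empty}}$. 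The pieces $u_2, \dots, u_s$ ($s-1$ of them) and $v_2, \dots, v_t$ ($t-1$ of them) are cut-vertex-free. That is only $s + t - 2$. To recover the full $s + t$, instead absorb junk on the \emph{outside only}: $w = \underbrace{w'\text{ absorbed into nothing}}$ — impossible. Therefore the actual argument must be more careful about the \emph{endpoints} $u_1$ and $v_t$ specifically: prepending $w'$ to $u_1$, does $\Gamma_{\ba-\{a_i\}}(w' u_1)$ still have no cut vertex? $\Gamma(w' u_1) \supseteq \Gamma(u_1)$ as a subgraph on the same vertex set (all of $\ba - \{a_i\}$ is the vertex set regardless), with extra edges. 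Adding edges to a graph with no cut vertex can create a cut vertex only by disconnecting — impossible, adding edges never disconnects — or by... actually, adding edges to a connected graph with no cut vertex keeps it connected, and a connected graph on the same vertex set with \emph{more} edges still has no cut vertex, because a cut vertex $x$ would require $\Gamma(w'u_1) \setminus x$ disconnected, but $\Gamma(u_1) \setminus x$ is a connected spanning subgraph of it. \emph{So adding edges cannot create a cut vertex.} This is the crux, and it is true. Hence $w' u_1$, $w'' v_1$ (or split $w''$), and $v_t w'''$ all remain cut-vertex-free, and I get the decomposition $w = (w' u_1)\, u_2 \cdots u_s\, (w'' v_1)\, v_2 \cdots v_t\,$ — but $w'''$ still dangles. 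Append $w'''$ to $v_t$: $v_t w'''$ is still good. Final decomposition: $w = (w' u_1)\, u_2 \cdots u_s\, (w'' v_1)\, v_2 \cdots v_{t-1}\, (v_t w''')$, which has $1 + (s-1) + 1 + (t-2) + 1 = s + t$ pieces, all cut-vertex-free. Therefore $|w|_i^{simple} \geq s + t = |u|_i^{simple} + |v|_i^{simple}$, as desired. The one genuine lemma I need and would isolate is: \emph{if $\Gamma$ has no cut vertex and $\Gamma'$ is obtained from $\Gamma$ by adding edges (on the same vertex set), then $\Gamma'$ has no cut vertex}; this is the main (very short) obstacle and follows because removing a vertex $v$ from $\Gamma'$ leaves a graph containing the connected spanning subgraph $\Gamma \setminus v$.
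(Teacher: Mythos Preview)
Your proposal is correct and, once the dust settles, takes essentially the same route as the paper: absorb the leftover subwords $w'$, $w''$, $w'''$ into the boundary pieces $u_1$, $u_s$/$v_1$, $v_t$ of the optimal decompositions of $u$ and $v$, and observe that enlarging a piece only adds edges to its Whitehead graph (on the fixed vertex set $(\ba-\{a_i\})^{\pm1}$), which cannot create a cut vertex. The paper compresses this last point into the phrase ``because $w$ is freely reduced''; your explicit argument that $\Gamma'\setminus v$ contains the connected spanning subgraph $\Gamma\setminus v$ is exactly the content behind that phrase, so you have in fact unpacked the paper's justification rather than diverged from it.
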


\begin{proof}
If $|u|_i^{simple}>0$  and $|v|_i^{simple}>0$ then consider the partitions of $u$ and $v$ into $|u|_i^{simple}$ and $|v|_i^{simple}$ pieces respectively.  These partitions induce a partition of $w$ into $|u|_i^{simple}+|v|_i^{simple}$ pieces, where the portions of $w$ disjoint from $u$ and $v$ are appended to the first and last pieces in the partitions of $u$ and $v$.  Note that appending will not brake the fact that the Whitehead graph of a piece does not have a cut vertex, because $w$ is freely reduced. If $|u|_i^{simple}=0$ (respectively, $|v|_i^{simple}=0$) then we similarly form a partition of $w$ into $|v|_i^{simple}$ (respectively, $|u|_i^{simple}$) pieces. In the case $|u|_i^{simple}=0$ and $|v|_i^{simple}=0$ the claim is trivial.
\end{proof}

\begin{lemma}
\label{lem:simple_length2}
Let $u$ and $v$ be freely reduced words which contain no occurrence of $a_i^{\pm 1}$  such that $w = uv$ is freely reduced.  Then
    \[|w|_i^{simple} \leq |u|_i^{simple}+|v|_i^{simple}+1.\]
\end{lemma}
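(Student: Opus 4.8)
The plan is to take an optimal partition $w = w_1 w_2 \cdots w_t$ realizing $t = |w|_i^{simple}$ (the case $t = 0$ being trivial, and if some $|u|_i^{simple}$ or $|v|_i^{simple}$ is zero we only need the weaker inequality, so we may argue uniformly) and to locate the cut point $|u|$, i.e. the position in $w$ where $u$ ends and $v$ begins. This cut point lies inside exactly one of the pieces, say $w_m$, or else falls exactly between $w_m$ and $w_{m+1}$. In the first case, write $w_m = w_m' w_m''$ where $w_m'$ is the initial segment of $w_m$ lying in $u$ and $w_m''$ the terminal segment lying in $v$. Then $w_1 \cdots w_{m-1} w_m'$ is a partition of $u$ into $m$ pieces and $w_m'' w_{m+1} \cdots w_t$ is a partition of $v$ into $t - m + 1$ pieces. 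In the second case we get partitions of $u$ into $m$ pieces and $v$ into $t - m$ pieces directly.

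The key point — and the only place any content enters — is that the pieces $w_1, \dots, w_{m-1}$ (a fortiori their Whitehead graphs) are unchanged, so they still have no cut vertex, and similarly for $w_{m+1}, \dots, w_t$; and for the two new pieces $w_m'$ and $w_m''$ we simply \emph{do not} claim anything about their Whitehead graphs. Instead we observe that the resulting partition of $u$ has $m$ pieces and the resulting partition of $v$ has $t - m + 1$ (or $t-m$) pieces, but only the pieces coming from $w_1,\dots,w_{m-1}$ and from $w_{m+1},\dots,w_t$ are guaranteed cut-vertex-free. The clean way to finish is to invoke Lemma~\ref{lem:simple_length} in the contrapositive direction, or rather directly: a partition of $u$ with its first $m-1$ pieces cut-vertex-free shows nothing by itself. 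So instead I would argue as follows. By Lemma~\ref{lem:simple_length} applied to $u$ and to $v$ separately it suffices to exhibit, for $u$, a decomposition into at least $|u|_i^{simple}$ cut-vertex-free pieces — which we already have by definition — and the point is rather to bound $t$ from above.

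So the actual argument is the reverse: suppose $w = w_1 \cdots w_t$ is an optimal decomposition of $w$ with $t = |w|_i^{simple}$ pieces, each $\Gamma_{\ba - \{a_i\}}(w_j)$ without a cut vertex. Remove the (at most one) piece $w_m$ straddling the point $|u|$; the pieces strictly to its left, namely $w_1, \dots, w_{m-1}$, all lie inside $u$ and are cut-vertex-free, so $m - 1 \leq |u|_i^{simple}$; the pieces strictly to its right, $w_{m+1}, \dots, w_t$, all lie inside $v$ and are cut-vertex-free, so $t - m \leq |v|_i^{simple}$. Adding these two inequalities gives $t - 1 \leq |u|_i^{simple} + |v|_i^{simple}$, i.e. $t \leq |u|_i^{simple} + |v|_i^{simple} + 1$, as desired. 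If the cut point falls exactly between two pieces, there is no straddling piece and we get the even stronger $t \leq |u|_i^{simple} + |v|_i^{simple}$. I expect no serious obstacle here: the only subtlety is bookkeeping about whether $|u|$ lands strictly inside a piece or on a boundary, and confirming that the sub-pieces $w_1,\dots,w_{m-1}$ genuinely lie within $u$ (which holds because $w = uv$ as \emph{words}, so positional concatenation is honest and free reduction plays no role).
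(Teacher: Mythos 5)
Your argument is correct and is essentially the paper's own proof: take an optimal partition $w = w_1\cdots w_t$, locate the piece $w_m$ straddling the $u$--$v$ boundary, and observe that the pieces on either side witness $m-1 \le |u|_i^{simple}$ and $t-m \le |v|_i^{simple}$. The one step you elide --- passing from ``$u$ contains $m-1$ disjoint cut-vertex-free subwords'' to ``$|u|_i^{simple} \ge m-1$'' --- is exactly Lemma~\ref{lem:simple_length}, or equivalently the observation the paper uses directly, that absorbing the leftover $w_m'$ into the last piece $w_{m-1}$ keeps its Whitehead graph cut-vertex-free; since you cite that lemma, the argument is complete.
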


\begin{proof}
If $|w|^{simple}_i=0$ then the claim is trivial. Otherwise let $w=w_1w_2\cdots w_k$ be a partition of $w$ realizing the simple $i$-length of $w$, and let $j$ denote the first index such that $w_j$ is not fully contained in $u$. This gives partitions $u=w_1w_2\cdots (w_{j-1}w_j')$ and $v=(w_j''w_{j+1})w_{j+2}\cdots w_k$ of $u$ and $v$ showing that
\[|u|_i^{simple}+|v|_i^{simple}\geq (j-1)+(k-j)=k-1=|w|_i^{simple}-1.\]
\end{proof}

\begin{lemma}
\label{lem:simple_conj}
If $w$ is a cyclically reduced word which contains no occurrence of $a_i^{\pm 1}$ and $w'$ is a cyclic conjugate of $w$, then
    \[|w|_i^{simple}-1\leq|w'|_i^{simple}\leq|w|_i^{simple}+1.\]
\end{lemma}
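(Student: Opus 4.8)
The plan is to prove the two inequalities by reducing them to the additivity-type bounds already established in Lemmas~\ref{lem:simple_length} and~\ref{lem:simple_length2}. Write $w = uv$ where $w' = vu$ is the given cyclic conjugate; both $u$ and $v$ are freely reduced and contain no $a_i^{\pm 1}$, and — crucially — since $w$ is \emph{cyclically} reduced, not only is $uv$ freely reduced but so is $vu$. First I would handle the edge cases: if either $u$ or $v$ is empty then $w' = w$ and there is nothing to prove, so assume both are nonempty.

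For the upper bound $|w'|_i^{simple} \leq |w|_i^{simple} + 1$: apply Lemma~\ref{lem:simple_length2} to the factorization $w' = v \cdot u$ to get $|w'|_i^{simple} \leq |v|_i^{simple} + |u|_i^{simple} + 1$, and apply Lemma~\ref{lem:simple_length} with $u, v$ viewed as disjoint subwords of $w = uv$ to get $|v|_i^{simple} + |u|_i^{simple} \leq |w|_i^{simple}$. Combining gives the claim. The symmetric inequality $|w|_i^{simple} \leq |w'|_i^{simple} + 1$ follows by running the same argument with the roles of $w$ and $w'$ swapped (i.e.\ factor $w = u \cdot v$, bound via Lemma~\ref{lem:simple_length2}, then use $|u|_i^{simple} + |v|_i^{simple} \leq |w'|_i^{simple}$ from Lemma~\ref{lem:simple_length} applied to $w' = vu$); this rearranges to $|w|_i^{simple} - 1 \leq |w'|_i^{simple}$, which is the left-hand inequality in the statement. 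Together the two give $|w|_i^{simple} - 1 \leq |w'|_i^{simple} \leq |w|_i^{simple} + 1$.

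I expect the only real point requiring care is the appeal to Lemma~\ref{lem:simple_length}, whose hypothesis is that $u$ and $v$ are disjoint subwords of a single freely reduced word; here that word is $w = uv$ for one inequality and $w' = vu$ for the other, and in each case disjointness and freely-reducedness are immediate from $w$ being cyclically reduced. A subtle detail is that Lemma~\ref{lem:simple_length2} requires that the concatenation being split is itself freely reduced — this is exactly why cyclic reducedness of $w$ is needed rather than mere freely-reducedness, since it is what guarantees no cancellation between $v$ and $u$ in $vu$. Beyond that the argument is a two-line combination of prior lemmas, so there is no substantive obstacle.
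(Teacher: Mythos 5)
Your argument is correct and is a cleaner, more modular version of what the paper does. The paper's proof works directly with the optimal partition $w = w_1\cdots w_k$, cuts $w_j$ at the rotation point into $w_j'w_j''$, and reassembles the pieces into a partition of $w'$ with at least $k-1$ admissible pieces — in effect re-deriving, inline, exactly the content of Lemmas~\ref{lem:simple_length} and~\ref{lem:simple_length2}. You instead factor the argument through those two lemmas: with $w = uv$ and $w' = vu$, Lemma~\ref{lem:simple_length2} applied to $vu$ gives $|w'|_i^{simple} \leq |u|_i^{simple}+|v|_i^{simple}+1$ and Lemma~\ref{lem:simple_length} applied to $uv$ gives $|u|_i^{simple}+|v|_i^{simple} \leq |w|_i^{simple}$, and the symmetric pairing gives the other inequality. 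Both proofs hinge on the same observation that cyclic reducedness of $w$ is what makes $vu$ freely reduced; you call this out explicitly, which is a genuine improvement in clarity. The one small presentational difference is that the paper proves only the lower bound $|w'|_i^{simple} \geq |w|_i^{simple}-1$ and appeals to the symmetry of cyclic conjugation for the other direction, whereas you spell out both directions; that is a matter of taste. There is no gap in your argument.
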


\begin{proof}
It is enough to show that cyclic conjugation cannot decrease the simple $i$-length by more than 1. Let $\iota(w)$ be the initial segment of $w$ such that $w' =_r w^{\iota(w)}$.  Let $w=w_1w_2\cdots w_k$ be the partition of $w$ realizing the simple $i$-length of $w$, and let $j$ denote the first index such that $w_j$ is not fully contained in $\iota(w)$.  Then $w'$ can be partitioned as ($w_j''w_{j+1})\dots w_kw_1\dots (w_{j-1}w_j')$ where $w_j = w_j'w_j''$.  Thus, $w'$ can be partitioned into at least $k-1$ subwords of nontrivial simple $i$-length, and the lemma follows.
\end{proof}

%%%%%%%%%%%%%%%%%%%%%%%%%
\subsection{Properties of Conjugate reduced $i$-Length}~\\
We now wish to describe some properties of conjugate reduced
$i$-length.  However, before we do so, we need to verify that
conjugate reduced $i$-length is not a trivial notion of complexity.
In this section, we show that there exist words of arbitrary
conjugate reduced $i$-length.  In the process, we develop a useful
lemma for working with $i$-length.  Then, we collect three short
lemmas which describe how conjugate reduced $i$-length is related to
simple $i$-length, and how conjugate reduced $i$-length behaves
under multiplication.

\begin{definition}[canceling pairs]
Let $w\in\F$ be arbitrary reduced word which contains no occurrence of $a_i^{\pm 1}$ . A set of any two subwords of $w$ of the form $u$, $u^{-1}$ is called a \emph{canceling pair in
$w$}. A family $\mathcal F$ of canceling pairs in $w$ is called
\emph{nested} if canceling pairs in $\mathcal F$ are disjoint and,
for any canceling pairs $u$, $u^{-1}$ and $v$, $v^{-1}$ in $\mathcal
F$, $v$ occurs between $u$ and $u^{-1}$ in $w$ if and only if
$v^{-1}$ does.  If $\mathcal F$ is a nested family of canceling
pairs for $w$, we abuse notation and let $w-\mathcal F$ denote the
set of subwords of $w$ which are maximal under inclusion and which
do not intersect any element of any canceling pair in $\mathcal F$
as subwords of $w$.  Finally, we define $|w-\mathcal F|_i^{simple}
:= |\mathcal F| + \sum_{w' \in (w-\mathcal F)} |w'|_i^{simple}$.
\end{definition}

\begin{lemma}\label{lem:subtosimple}
Let $w\in\F$ be a nontrivial reduced word which contains no
occurrence of $a_i^{\pm 1}$ and let $T$ be the set consisting of all
nested families of canceling pairs of $w$.  Then
    \[|w|_i^{cr}\geq\min_{\mathcal F \in T} \left(\max\left\{
    \frac{|\mathcal F|}{2}-1,~~
    \frac{1}{5}|w-\mathcal F|_i^{simple}-3
     \right\} \right).\]
\end{lemma}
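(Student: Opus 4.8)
The plan is to start from an optimal decomposition $w =_r v_1^{u_1} v_2^{u_2} \cdots v_l^{u_l}$ realizing $|w|_i^{cr} = (l-1) + \sum_j |v_j|_i^{simple}$, and to extract from it a nested family $\mathcal{F}$ of canceling pairs in $w$ together with a lower bound on $|w-\mathcal{F}|_i^{simple}$ in terms of the $|v_j|_i^{simple}$. The point is that when one freely reduces the product $v_1^{u_1}\cdots v_l^{u_l}$ to get $w$, all the cancellation is between $u_j^{-1}$, $v_j$, $u_j$ and the neighboring conjugators; each surviving letter of $w$ comes from exactly one of the words $u_j^{-1}, v_j, u_j$. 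The conjugators $u_j$ contribute the canceling pairs: the portion of $u_j$ that survives in $w$ (if any) cancels against a matching portion of $u_{j'}^{-1}$ for some later $j'$, or more precisely, reading $w$ from the left, the $u_j$'s and their inverses appear in a nested (parenthesis-like) pattern, so the surviving pieces form a nested family of canceling pairs. This gives the family $\mathcal{F}$ used in the minimum.

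**Key steps, in order.** First I would set up notation for the cancellation pattern: write each $v_j^{u_j}$ as $u_j^{-1} v_j u_j$ and track, after full free reduction, which maximal subword of the reduced word $w$ comes from which of the $3l$ constituent words. Second, I would argue that the surviving parts of the $u_j$'s (paired with the matching surviving parts of the $u_j^{-1}$'s and the surviving ends of adjacent $u_{j+1}^{-1}$, etc.) can be organized into a nested family $\mathcal{F}$ of canceling pairs — this is essentially the observation that free reduction of a product respects a planar/non-crossing matching of the cancelled letters, so the uncancelled halves inherit a non-crossing (nested) structure. The count is $|\mathcal{F}| \le l-1$ roughly (there are at most $l-1$ "junctions" between consecutive factors), but I need to be careful: a single $u_j$ could survive entirely and pair with part of a distant $u_{j'}^{-1}$. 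Third, I would bound $|w-\mathcal{F}|_i^{simple}$ from below: the subwords of $w$ not meeting $\mathcal{F}$ contain, as disjoint subwords, (surviving portions of) the $v_j$'s, and by Lemma~\ref{lem:simple_length} simple $i$-length is superadditive over disjoint subwords, so $|w-\mathcal{F}|_i^{simple} \ge$ (sum of simple $i$-lengths of the surviving pieces of the $v_j$'s) $+ |\mathcal{F}|$. The wrinkle is that a $v_j$ may be partially eaten by cancellation from $u_j^{\pm 1}$; I would use Lemma~\ref{lem:simple_length2} (removing a prefix/suffix drops simple $i$-length by at most $1$ per end, i.e.\ at most $2$ per $v_j$) and Lemma~\ref{lem:simple_conj} for the cyclic issues, to get $\sum_j |(\text{surviving part of }v_j)|_i^{simple} \ge \sum_j |v_j|_i^{simple} - 2l$ or a similar linear loss. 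Finally, I would combine: if $|\mathcal{F}|$ is large (comparable to $l$), the bound $\frac{|\mathcal{F}|}{2} - 1$ already gives something comparable to $l-1 \le |w|_i^{cr}$; if $|\mathcal{F}|$ is small, then most $v_j$'s survive mostly intact, so $|w-\mathcal{F}|_i^{simple} \gtrsim \sum_j |v_j|_i^{simple} - O(l) \gtrsim |w|_i^{cr} - O(l)$, and combined with $l$ being controlled by $|\mathcal{F}|$ in this case, the $\frac{1}{5}|w-\mathcal{F}|_i^{simple} - 3$ term wins. The constants $\frac12, \frac15, 1, 3$ come out of balancing these two regimes — I would pick the threshold between "large" and "small" $|\mathcal{F}|$ to make the worst case match, and expect the $\frac15$ to absorb the $2l$-type losses together with the case split.

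**Main obstacle.** The hard part will be step two — rigorously producing the nested family $\mathcal{F}$ and pinning down the exact relationship between $|\mathcal{F}|$, the factor length $l$, and how much of each $v_j$ and $u_j$ survives. One has to handle the genuinely annoying cases where a conjugator $u_j$ is long relative to $v_j$ and cancels into non-adjacent factors, or where the free reduction of the whole product causes cancellation to "tunnel through" a short $v_j$ entirely (so that pieces of $u_{j-1}$ and $u_{j+1}$ meet directly); these are precisely the phenomena the conjugate-reduced $i$-length was designed to track, so the bookkeeping is essential rather than cosmetic. I expect the cleanest route is to induct on $l$ (or on $|w|$), peeling off one conjugated factor at a time and showing that each peel changes $|\mathcal{F}|$, $|w - \mathcal{F}|_i^{simple}$, and $|w|_i^{cr}$ in a controlled way, so that the two-term max on the right is maintained. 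The superadditivity/subadditivity lemmas of the previous subsection are exactly what make the inductive step go through, and the slack in the constants ($-1$, $-3$, and the factors $\frac12$, $\frac15$) is what absorbs the boundary losses at each peel.
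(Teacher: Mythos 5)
Your starting point (an optimal decomposition $w =_r v_1^{u_1}\cdots v_l^{u_l}$, extracting a nested family $\mathcal{F}$ from the surviving parts of the conjugators) matches the paper's proof, but your Step 3 has the inequality direction reversed, and this is not a cosmetic slip. The lemma asserts $|w|_i^{cr} \geq \min_{\mathcal F} \max\{\frac{|\mathcal F|}{2}-1,\ \frac{1}{5}|w-\mathcal F|_i^{simple}-3\}$; to prove it you must exhibit \emph{one} $\mathcal F$ for which \emph{both} $|\mathcal F| \leq 2|w|_i^{cr}+2$ and $|w-\mathcal F|_i^{simple} \leq 5|w|_i^{cr}+15$. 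So you need \emph{upper} bounds on $|\mathcal F|$ and on $|w-\mathcal F|_i^{simple}$ in terms of $l$ and $\sum_j|v_j|_i^{simple}$. You instead propose a \emph{lower} bound on $|w-\mathcal F|_i^{simple}$ via the superadditivity Lemma~\ref{lem:simple_length}, and describe the second term as ``winning'' when $|\mathcal F|$ is small — but a large lower bound on $|w-\mathcal F|_i^{simple}$ pushes the right-hand side \emph{up}, which is the opposite of what you want. The correct tool is the \emph{subadditivity} Lemma~\ref{lem:simple_length2} (together with Lemma~\ref{lem:simple_conj} for cyclic shifts): the maximal pieces of $w$ away from $\mathcal F$ are each concatenations of subwords of cyclic conjugates of the $v_j$'s, and subadditivity gives $|w-\mathcal F|_i^{simple} \leq |w|_i^{cr} + O(l)$; the case split is then on whether $|w-\mathcal F|_i^{simple}$ exceeds $5l-5$, not on the size of $|\mathcal F|$.

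There is a second, softer gap that you correctly flag as the ``main obstacle'': extracting the nested family $\mathcal F$ and simultaneously controlling (a) $|\mathcal F|$ by $O(l)$, (b) the number of maximal pieces of $w$ away from $\mathcal F$ by $O(l)$, and (c) the fact that each such piece decomposes as a bounded concatenation of subwords of cyclic conjugates of the $v_j$'s, all in the presence of tunneling (a $v_j$ being consumed entirely). Your naive picture — that a surviving part of $u_j$ pairs with a surviving part of $u_j^{-1}$ — fails as soon as $u_j$ cancels against an adjacent $u_{j\pm1}^{\mp 1}$ rather than against its own inverse. The paper organizes exactly this bookkeeping by building a van Kampen diagram with boundary label $\gamma$ (a wedge of lollipops), folding it along the fixed free-reduction process, collapsing the bridge set to get $\Gamma'$, and reading off $\mathcal F$ from the bridge paths and the pieces of $w$ from the boundary arcs of $\Gamma'$; the bounds $|\mathcal F|\leq 2r$ and $k\leq 2l'-1\leq 2l-1$ then come from elementary counting on planar trees and diagrams. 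If you want to avoid van Kampen diagrams you would need to reproduce this planar/nested structure by hand, and I would encourage you to check that whatever inductive peel you set up respects the planarity of the cancellation matching — that is the content you would otherwise be getting for free.
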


\begin{proof}
Let $\gamma=v_1^{u_1}v_2^{u_2}\cdots v_l^{u_l}$ be an optimal
decomposition of $w$ realizing its conjugate reduced $i$-length.

First of all, since $\gamma$ is optimal, we may assume that all
$v_j$ are cyclically reduced (cyclic reduction of $v_j$ cannot
increase the conjugate reduced $i$-length).

Now we will utilize the technique used in the proof of the van
Kampen lemma (see, for example,~\cite{lyndon_s:comb_grp_theory}).
The word $w$ represents a trivial element in the group defined by
the presentation
    \begin{equation}
    \label{eqn:present}
    \langle \ba\ |\ v_1,v_2,\ldots,v_l\rangle.
    \end{equation}
Consider the van Kampen diagram $\Gamma_0$ with boundary label
$\gamma$ over the presentation~\eqref{eqn:present} as depicted in
Figure \ref{fig:gamma_diagram}. This diagram is a wedge of $l$
``lollipops'' corresponding to $l$ factors of $\gamma$ with
``stems'' labeled by the $u_j$ and with the ``candies'' (boundaries of
$2$-cells) labelled by the $v_j$. The base-vertex in $\Gamma_0$ is the
common vertex of ``lollipops''.

\begin{figure}[!h]
\epsfig{file=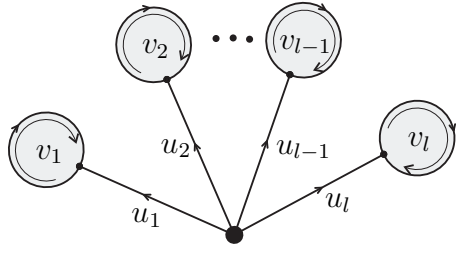} \caption{van Kampen diagram
$\Gamma_0$ corresponding to decomposition $\gamma$}
\label{fig:gamma_diagram}
\end{figure}

Fix some free reduction process transforming $\gamma$ to $w$. The
$j$th step of this reduction process takes the van Kampen diagram
$\Gamma_{j-1}$ to the diagram $\Gamma_j$, and corresponds to
modifying a pair of adjacent, inversely labeled edges along the
boundary cycle of $\Gamma_{j-1}$.  This has the effect of `removing'
this pair of edges from the boundary cycle of $\Gamma_j$ in the
following sense. If these two edges have just one vertex in common,
they are folded and if this common vertex has degree $2$ in $\Gamma_{j-1}$ then the edge obtained by folding is removed from $\Gamma_{j-1}$. If they have two vertices in common, the union of
$2$-cells bounded by these two edges is completely removed from
$\Gamma_{j-1}$. This folding or removing defines a new van Kampen
diagram $\Gamma_j$. At the end of the process we obtain a van Kampen
diagram $\Gamma$ with boundary label $w$ shown in
Figure~\ref{fig:van_kampen_bridges}. Note that in the reduction
process the number of $2$-cells in each successive van Kampen
diagram does not grow, so the number $l'$ of $2$-cells in $\Gamma$
does not exceed $l$.

\begin{figure}[!h]
\epsfig{file=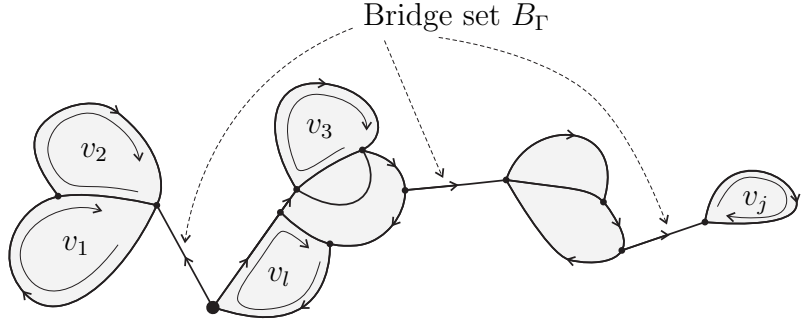} \caption{van Kampen
diagram $\Gamma$ after folding} \label{fig:van_kampen_bridges}
\end{figure}

Because each of the $v_j$'s is cyclically reduced, the boundary of
each 2-cell in the diagram $\Gamma$ is labeled by a cyclic conjugate
of $v_j$ that depends on where along the boundary one begins
reading. The \emph{bridge set} $B_\Gamma$ of $\Gamma$ is the set of
all vertices and edges whose deletion from the topological
realization $|\Gamma|$ of $\Gamma$ would disconnect it.  A
\emph{disk-component} of $\Gamma$ is a subset of $\Gamma$ which is
the closure of a connected component of $|\Gamma| - |B_\Gamma|$. The
disk-components of $\Gamma$ are joined by (possibly trivial)
edge-paths from the bridge set. Retracting each of these paths to a
point produces a new van Kampen diagram $\Gamma'$ with a boundary
label $u$ obtained from $w$ by removing a nested family of canceling
pairs, denoted $\mathcal F$, where each canceling pair corresponds to a path inside $B_\Gamma$ whose inner vertices have degree 2. Such a diagram is depicted in
Figure~\ref{fig:van_kampen_no_bridges}. Note that $u$ is not
necessarily freely reduced, but that $u$ is the product of subwords
in $w-\mathcal F$, all of which are subwords of $w$ and hence freely
reduced.  The vertices of degree at least three along the boundary
of $\Gamma'$ split $u$ into subwords $w_1,w_2,\ldots,w_k$, where
each $w_j$ is a part of the boundary of a $2$-cell in $\Gamma$. This
partition of $u$ refines the partition $w_1',w_2',\ldots,w_r'$ of
$u$ induced by $w - \mathcal F$.

Collapsing all disc components of $\Gamma$ and removing vertices of
degree 2 leaves the tree with $e$ edges and $r'$ vertices of degree
1, each of which was obtained by collapsing one of the disc
components. In every such tree we have $e\leq 2r'$. For the number
of canceling pairs in $\mathcal F$ we get $|\mathcal F|=e+r''$,
where $r''$ is the number of disc components in $\Gamma$ that
collapse to the vertices of degree $2$. But since each disc
component produces at least one $w_j'$, we get

\begin{equation}
\label{eqn:|F|}
|\mathcal F|=e+r''\leq 2r'+r''\leq2(r'+r'')\leq 2r
\end{equation}

\begin{figure}
\epsfig{file=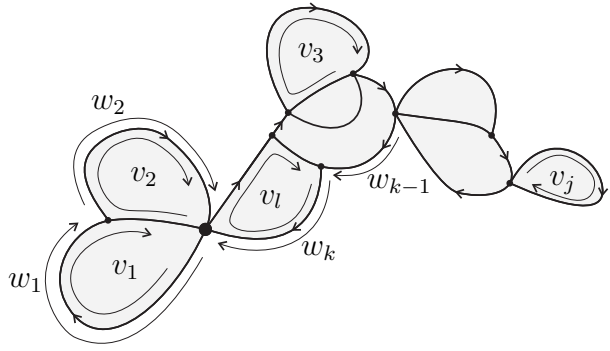} \caption{van
Kampen diagram $\Gamma'$ after bridge retraction}
\label{fig:van_kampen_no_bridges}
\end{figure}

We have by Lemma~\ref{lem:simple_length2} and the last inequality that
\begin{equation}
\label{eqn:wprimesimple} |w-\mathcal F|_i^{simple}=|\mathcal F|+
\sum_{j=1}^r|w_j'|_i^{simple}\leq 2r+
\sum_{j=1}^k|w_j|_i^{simple}+(k-r)= k + r +
\sum_{j=1}^k|w_j|_i^{simple}.
\end{equation}

By construction each $w_j$ is a subword of a cyclic conjugate $v_t'$ of some $v_t$ representing the label of the boundary of 2-cell in $\Gamma'$ to which $w_j$ belongs. It may happen that several $w_j$ lie on the boundary of one cell labelled by a conjugate of $v_t$, but by construction these occurences do not overlap. Let $\{c_1,\ldots,c_l\}$ denote the set of cells in $\Gamma'$ and assume that $v_j$ is a boundary label of $c_j$. Then the sum in~\eqref{eqn:wprimesimple} can be rewritten as

\begin{equation}
    \label{eqn:wprimesimple2}
    |w-\mathcal F|_i^{simple}
    \leq k + r + \sum_{t=1}^l\sum_{\ w_j\in\partial c_t}|w_{j}|_i^{simple}
    \end{equation}
Since by Lemmas~\ref{lem:simple_length} and~\ref{lem:simple_conj},
    \[
    \sum_{\ w_j\in\partial c_t}|w_j|_i^{simple}\leq|v_t'|_i^{simple}\leq|v_t|_i^{simple}+1,
    \]
we can transform inequality~\eqref{eqn:wprimesimple2} to
    \begin{equation}\label{eqn:wprimesimple3}
    |w-\mathcal F|_i^{simple}
    \leq k + r + \sum_{t=1}^l(|v_t|_i^{simple}+1) = |w|_i^{cr} + k + r + 1\leq |w|_i^{cr} + 2k + 1.
    \end{equation}

To finish the proof of the lemma we first prove that $k\leq
2l'-1\leq2l-1$. This follows by induction on the number $l'$ of
cells of $\Gamma$ as follows.  Clearly if $l' = 1$ then $k = 1$.
Assume that for any bridge-free van Kampen diagram with $l'-1$
2-cells the number of arcs along the boundary without vertices of
degree at least 3 is at most $2(l'-1)-1$. Choose a 2-cell $c$ in
$\Gamma$ whose boundary contains a piece $p$ of boundary of $\Gamma$
and such that after $p$ and interior of $c$ are removed from
$\Gamma$ the resulting diagram $\Gamma-p$ is still bridge-free and
connected. There are several cases describing how $p$ may be
attached to the boundary of $\Gamma-p$.  It is straightforward to
check that, in all cases, the attaching of $p$ can increase the
number of arcs without vertices of degree at least 3 by at most $2$.

Finally, consider two cases. If $|w-\mathcal F|_i^{simple} \leq 5l-5$, then
    \[|w|_i^{cr}\geq l-1\geq \frac{1}{5} |w-\mathcal F|_i^{simple}.\]
But if $|w-\mathcal F|_i^{simple} > 5l-5$, then since $k\leq2l-1<\frac{2}{5} |w-\mathcal F|_i^{simple} + 1$ we get from~\eqref{eqn:wprimesimple3}
    \[|w|_i^{cr}
    \geq |w-\mathcal F|_i^{simple} -2k -1
    >   \frac{1}{5} |w-\mathcal F|_i^{simple} -3.\]
This proves half of the lemma.

For the second part of the lemma note that by~\ref{eqn:|F|}
\[|\mathcal F|\leq 2(r'+r'')\leq 2l\]
since $r'+r''$ does not exceed the number of all disk components in
$\Gamma$ and each disk component contains at least one cell. Thus,
    $$|w|_i^{cr} \geq l - 1\geq \frac{|\mathcal F|}{2}-1.$$

The statement of the lemma now follows.
\end{proof}

\begin{corollary}\label{cor:positive}
If $w$ is a positive word, then
    $$|w|^{cr} \geq \frac{1}{5}|w|^{simple}-3.$$
\end{corollary}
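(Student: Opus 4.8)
The plan is to obtain this as an immediate specialization of Lemma~\ref{lem:subtosimple}, using the fact that a positive word admits no cancellation at all. Throughout, fix the index $i$ implicit in the notation, so that $w$ is a positive word containing no occurrence of $a_i^{\pm 1}$ (this is what is needed for simple and conjugate reduced $i$-length to be defined).

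The first step is to identify the set $T$ of nested families of canceling pairs of $w$. I claim it is trivial: $w$ has no nontrivial canceling pair. Indeed, if $u$ is a nontrivial word and both $u$ and $u^{-1}$ occur as subwords of $w$, then $u^{-1}$ is a nonempty word all of whose letters are inverse generators, which cannot occur inside a positive word. And since the definition of a canceling pair requires a set of \emph{two} subwords $u,u^{-1}$, the empty word $u=1$ does not produce a genuine canceling pair either. Hence $T=\{\mathcal F_0\}$ with $\mathcal F_0=\emptyset$.

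The second step is to evaluate the right-hand side of Lemma~\ref{lem:subtosimple} at $\mathcal F_0$. We have $|\mathcal F_0|=0$, and since no canceling pair is removed, $w-\mathcal F_0=\{w\}$, so $|w-\mathcal F_0|_i^{simple}=|\mathcal F_0|+|w|_i^{simple}=|w|_i^{simple}$. Therefore
$$\min_{\mathcal F\in T}\left(\max\left\{\frac{|\mathcal F|}{2}-1,\ \frac{1}{5}|w-\mathcal F|_i^{simple}-3\right\}\right)=\max\left\{-1,\ \frac{1}{5}|w|_i^{simple}-3\right\}\ \geq\ \frac{1}{5}|w|_i^{simple}-3,$$
and Lemma~\ref{lem:subtosimple} then yields $|w|_i^{cr}\geq \frac{1}{5}|w|_i^{simple}-3$, which is the assertion.

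There is no real obstacle here: all the substance lives in Lemma~\ref{lem:subtosimple}, and this corollary is precisely the case in which the van Kampen diagram of an optimal decomposition has no bridges to retract, so the $|\mathcal F|/2$ term plays no role. The only thing to be slightly careful about is the degenerate/empty canceling pair, which I dispatch as in the first step.
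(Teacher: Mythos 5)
Your proof is correct and takes essentially the same approach as the paper: observe that a positive word admits no nontrivial canceling pairs, so only the trivial family contributes to the minimum in Lemma~\ref{lem:subtosimple}, and then evaluate the bound. The paper states this in one sentence; you have simply spelled out the (routine) justification.
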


\begin{proof}
If $w$ is positive, then the only possible family of canceling pairs is the trivial family.
\end{proof}

\begin{corollary}\label{cor:arbitrarylength}
There exist words of arbitrary (simple, conjugate reduced)
$i$-length; there exist bases of $\F$ of arbitrary full $i$-length.
\end{corollary}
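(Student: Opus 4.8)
The plan is to produce explicit witnesses for each of the three assertions, building them up in the same order the definitions were introduced: first simple $i$-length, then conjugate reduced $i$-length, then full $i$-length. For simple $i$-length, I would take a word built out of many blocks whose Whitehead graph has no cut vertex, concatenated so that no free cancellation occurs and so that the joints cannot be absorbed. A natural candidate is a positive word of the form $w = (a_{j_1}a_{j_2}\cdots)^{\text{pattern}}$ using letters from $\ba - \{a_i\}$, where a single short block $b$ has $\Gamma_{\ba - \{a_i\}}(b)$ with no cut vertex (for $n-1 \geq 2$ such blocks exist, e.g. something like $a_1^2 a_2^2$ read cyclically, or more carefully a block whose Whitehead graph is $2$-connected). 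Then $w = b^N$ has $|w|_i^{simple} \geq N$ by Lemma \ref{lem:simple_length} applied repeatedly to the $N$ disjoint copies, since each copy contributes at least $1$. So simple $i$-length is unbounded.

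For conjugate reduced $i$-length, the point is precisely Corollary \ref{cor:positive}: if $w$ is a positive word then $|w|_i^{cr} \geq \frac{1}{5}|w|_i^{simple} - 3$. Taking the same positive word $w = b^N$ as above with $b$ positive and $\Gamma_{\ba-\{a_i\}}(b)$ with no cut vertex, we get $|w|_i^{cr} \geq \frac{1}{5}N - 3 \to \infty$. This is the cleanest route and avoids re-running the van Kampen argument; the only thing to check is that the block $b$ can simultaneously be chosen positive and with no cut vertex in its (non-augmented) Whitehead graph over the $n-1 > 1$ letters available, which is an elementary finite verification.

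For full $i$-length, I would exhibit an actual basis of $\F$. The model is the worked example at the end of the $i$-length subsection: $w = a_1^2 a_2^2 \cdots a_{n-1}^2 a_1$ and the basis-like element $a_n w^l$ (and the claim $|a_n w^l|_n \geq l/3 - 2$ which is promised as Corollary \ref{cor:arbitrarylength}). Concretely, take the basis $\bx = (a_1, \dots, a_{n-1}, a_n w^l)$ where $w$ is a positive word in $a_1, \dots, a_{n-1}$ chosen so that $w^l$ (equivalently, suitable cyclic subwords of it that appear as $i$-chunks after applying $\alpha_{\bx}$) has large conjugate reduced $i$-length for $i = n$; since $a_n$ occurs only once, the cyclic reduction and the $w_L, w_R, w_C$ bookkeeping strip off only a bounded amount, so the relevant $i$-chunk is essentially a long power of $w$, whose conjugate reduced $i$-length grows linearly in $l$ by the positive-word estimate. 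Hence $|\bx|_n = k(\bx) + |w_R w_L|_n^{cr} \geq k(\bx) \to \infty$.

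The main obstacle is the full $i$-length case: one must carefully track how the normalizing automorphism $\alpha_{\bx}$ and the cyclic reduction interact with $a_n w^l$, verify that the $i$-chunks of $\alpha_{\bx}(\bx)$ really do contain a long positive power of (a cyclic conjugate of) $w$ rather than being chopped into bounded pieces, and confirm the chosen $w$ yields a genuine basis — the latter is automatic here since appending a word in the other generators to $a_n$ is an $\mathcal{S}$-transformation (a composition of transvections), so $\bx$ is indeed a basis. Once the $i$-chunk is identified as essentially $w^{l-O(1)}$, the lower bound follows from Corollary \ref{cor:positive} exactly as in the promised Corollary \ref{cor:arbitrarylength} estimate $|a_n w^l|_n \geq l/3 - 2$, so the linear growth in $l$ is the conclusion.
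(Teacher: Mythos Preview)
Your treatment of simple and conjugate reduced $i$-length is correct and matches the paper's approach: the paper uses the specific positive word $w = a_1^2 a_2^2 \cdots a_{n-1}^2 a_1$ and invokes Corollary~\ref{cor:positive} to get $|w^l|_n^{cr} \geq l/5 - 3$.

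Your full $i$-length argument, however, contains a genuine error. You assert that ``the $w_L, w_R, w_C$ bookkeeping strip off only a bounded amount'' so that the relevant $i$-chunk of $\alpha_{\bx}(a_n w^l)$ is ``essentially $w^{l - O(1)}$'', and you conclude $|\bx|_n \geq k(\bx) \to \infty$. This is backwards. For $\bx = (a_1, \dots, a_{n-1}, a_n w^l)$ there is a \emph{single} occurrence of $a_n$, so by definition $w_L(\bx)$ (the longest word cyclically preceding every occurrence of $a_n$) absorbs \emph{all} of $w^l$: one computes $w_L(\bx) = w^l$, $w_R(\bx)$ trivial, and $\alpha_{\bx}$ sends $a_n w^l$ to $a_n$. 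Hence every $n$-chunk of $\alpha_{\bx}\bx$ is a single letter and $k(\bx) = 0$, not $\to\infty$. (In fact Lemma~\ref{lem:kis0}, proved later, shows $k = 0$ for \emph{every} basis, so $k(\bx)\to\infty$ is impossible in principle.) The unbounded growth sits entirely in the other summand: $|\bx|_n = k(\bx) + |w_R(\bx)w_L(\bx)|_n^{cr} = 0 + |w^l|_n^{cr} \geq l/5 - 3$. The paper's one-line proof records exactly this identity, $|a_n w^l|_n = |w^l|_n^{cr} \geq l/5 - 3$.
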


\begin{proof}
It now follows from the previous corollary that, for $w = a_1^2a_2^2\dots a_{n-1}^2a_1$,
    $$|a_nw^l|_n = |w^l|_n^{cr} \geq l/5-3.$$
\end{proof}

We now state a lemma relating simple and conjugate reduced
$i$-length.

\begin{lemma}\label{lem:nocutvertex}
For any reduced word $w$,
    $$|w|_i^{simple} \geq |w|_i^{cr}.$$
If $|w|_i^{cr} > 0$, then the Whitehead graph $\Gamma_{\ba -
\{a_i\}}(w)$ has no cut vertex.
\end{lemma}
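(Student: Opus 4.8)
The plan is to prove both inequalities by directly manipulating an optimal decomposition of $w$. For the first inequality, $|w|_i^{simple} \geq |w|_i^{cr}$, I would take the trivial decomposition $w = v_1^{u_1}$ with $l = 1$, $v_1 = w$, and $u_1 = 1$. By definition this decomposition has associated conjugate reduced $i$-length equal to $(l-1) + |v_1|_i^{simple} = |w|_i^{simple}$. Since $|w|_i^{cr}$ is the minimum of the associated values over all decompositions, we immediately get $|w|_i^{cr} \leq |w|_i^{simple}$. This first part is essentially a one-line observation once one unwinds the definitions.

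For the second assertion, I would argue by contrapositive: if $\Gamma_{\ba - \{a_i\}}(w)$ has a cut vertex, then $|w|_i^{cr} = 0$. If $w$ has a cut vertex in its Whitehead graph, then by the definition of simple $i$-length, $|w|_i^{simple} = 0$. Now using the trivial decomposition as above, the associated conjugate reduced $i$-length is $(1-1) + |w|_i^{simple} = 0$, and since $|w|_i^{cr}$ is a minimum of nonnegative quantities (each term $|v_j|_i^{simple}$ is nonnegative and $l - 1 \geq 0$), we conclude $|w|_i^{cr} = 0$. Taking the contrapositive gives: $|w|_i^{cr} > 0$ implies $\Gamma_{\ba - \{a_i\}}(w)$ has no cut vertex.

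I do not anticipate a genuine obstacle here — both parts follow directly from the definitions of simple and conjugate reduced $i$-length, using only that the trivial (length-one, trivial-conjugator) decomposition is always available and that all the quantities involved are nonnegative. The one subtle point worth stating explicitly is that every term appearing in the associated value $k = (l-1) + \sum_j |v_j|_i^{simple}$ is nonnegative, so that $|w|_i^{cr} \geq 0$ always, and $|w|_i^{cr} = 0$ forces (via the trivial decomposition realizing it, or via any decomposition) that each simple $i$-length term can be zero — but for the contrapositive direction we only need the easy direction that the trivial decomposition certifies $|w|_i^{cr} = 0$ when $|w|_i^{simple} = 0$. So the proof is short, and the main work is just citing the relevant definitions carefully.
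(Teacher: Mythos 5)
Your proof is correct and matches the paper's approach. For the first part you both use the trivial single-factor decomposition $w = w^{1}$. For the second part you argue by contrapositive and directly invoke the clause in the definition of simple $i$-length that assigns $|w|_i^{simple} = 0$ when $\Gamma_{\ba-\{a_i\}}(w)$ has a cut vertex; the paper instead argues by contradiction and explicitly observes that, since $w$ is freely reduced, a cut vertex in $\Gamma_{\ba-\{a_i\}}(w)$ is inherited by the Whitehead graph of every subword (which is what makes that defining clause consistent with the rest of the definition). The content is the same; the paper's version just makes that background fact about subwords visible rather than leaning on the definition by fiat.
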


\begin{proof}
A word $w$ represents a decomposition of itself with one factor
whose conjugate reduced $i$-length is equal by definition to
$|w|_i^{simple}$.

If $|w|_i^{cr} > 0$, then $|w|_i^{simple} > 0$.  If the Whitehead
graph $\Gamma_{\ba - \{a_i\}}(w)$ had a cut vertex, then since $w$
is freely reduced the Whitehead graph of any subword of $w$ would
also have a cut vertex.  This contradicts that $|w|_i^{simple} > 0$.
\end{proof}

Finally, we have the two lemmas describing how conjugate reduced
$i$-length behaves under multiplication.

\begin{lemma}\label{lem:subwordmult}
For any words $u$ and $v$, we have
    $$|u|_i^{cr} - |v|_i^{cr} - 1 \leq |uv|_i^{cr} \leq |u|_i^{cr} + |v|_i^{cr} + 1.$$
\end{lemma}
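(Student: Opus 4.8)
The plan is to prove both inequalities by manipulating optimal decompositions. For the upper bound, start with an optimal decomposition $u =_r v_1^{u_1}\cdots v_l^{u_l}$ realizing $|u|_i^{cr}$ and an optimal decomposition $v =_r w_1^{t_1}\cdots w_m^{t_m}$ realizing $|v|_i^{cr}$. Concatenating these gives a decomposition of $uv$ into $l+m$ conjugated factors, so by definition $|uv|_i^{cr} \leq (l+m-1) + \sum_j |v_j|_i^{simple} + \sum_j |w_j|_i^{simple} = \bigl((l-1) + \sum |v_j|_i^{simple}\bigr) + \bigl((m-1) + \sum |w_j|_i^{simple}\bigr) + 1 = |u|_i^{cr} + |v|_i^{cr} + 1$. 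This direction is essentially immediate from the additivity structure built into the definition of conjugate reduced $i$-length.

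For the lower bound $|u|_i^{cr} - |v|_i^{cr} - 1 \leq |uv|_i^{cr}$, I would rewrite it as $|u|_i^{cr} \leq |uv|_i^{cr} + |v|_i^{cr} + 1$ and apply the upper bound just established to the product $(uv)(v^{-1})$: indeed $u =_r (uv)v^{-1}$, so $|u|_i^{cr} = |(uv)v^{-1}|_i^{cr} \leq |uv|_i^{cr} + |v^{-1}|_i^{cr} + 1$. It then remains only to observe that $|v^{-1}|_i^{cr} = |v|_i^{cr}$. This symmetry under inversion follows because from a decomposition $v =_r v_1^{u_1}\cdots v_l^{u_l}$ one obtains $v^{-1} =_r (v_l^{-1})^{u_l}\cdots (v_1^{-1})^{u_1}$, which is again a valid decomposition with the same factor length and with $|v_j^{-1}|_i^{simple} = |v_j|_i^{simple}$ (the Whitehead graph $\Gamma_{\ba-\{a_i\}}(w)$ and that of $w^{-1}$ coincide up to the involution on vertices $a \leftrightarrow a^{-1}$, so cut vertices are preserved, hence the optimal partition of $v_j$ into cut-vertex-free pieces transfers to $v_j^{-1}$ read in reverse). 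Thus every decomposition of $v$ yields one of $v^{-1}$ with the same associated conjugate reduced $i$-length, and vice versa, giving $|v^{-1}|_i^{cr} = |v|_i^{cr}$.

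The only mildly delicate point — and the one I would flag as the main thing to get right — is the claim that simple $i$-length is invariant under inversion of a word, i.e. that the Whitehead graph of a freely reduced word $w$ over $\ba - \{a_i\}$ has a cut vertex if and only if the Whitehead graph of $w^{-1}$ does, and more precisely that a partition $w = w_1\cdots w_t$ into cut-vertex-free pieces corresponds to the partition $w^{-1} = w_t^{-1}\cdots w_1^{-1}$ into cut-vertex-free pieces. This is because $\Gamma_{\ba-\{a_i\}}(w^{-1})$ is obtained from $\Gamma_{\ba-\{a_i\}}(w)$ by applying the graph automorphism that swaps each vertex $a$ with $a^{-1}$: a consecutive pair $a_j a_k$ in $w$ contributes an edge from $a_j$ to $a_k^{-1}$, while the corresponding consecutive pair $a_k^{-1} a_j^{-1}$ in $w^{-1}$ contributes an edge from $a_k^{-1}$ to $a_j$, the same edge. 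Since graph automorphisms carry cut vertices to cut vertices, the equivalence of partitions follows, and with it $|w|_i^{simple} = |w^{-1}|_i^{simple}$. Everything else is bookkeeping with the definition, so the lemma falls out quickly once this symmetry is in hand.
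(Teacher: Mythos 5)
Your proof is correct and follows the same route as the paper: concatenate optimal decompositions for the upper bound, then apply that bound to $(uv)v^{-1}$ for the lower bound, using the inversion-invariance $|v^{-1}|_i^{cr} = |v|_i^{cr}$, which the paper uses silently and you rightly spell out. One small internal inconsistency in your justification of that invariance: you first assert that $\Gamma_{\ba-\{a_i\}}(w^{-1})$ is obtained from $\Gamma_{\ba-\{a_i\}}(w)$ by the vertex involution $a \leftrightarrow a^{-1}$, but your own edge-by-edge calculation then shows that the consecutive pair $a_k^{-1}a_j^{-1}$ in $w^{-1}$ contributes literally the same edge $\{a_k^{-1},a_j\}$ as the pair $a_ja_k$ does in $w$, so the two Whitehead graphs are in fact identical (no nontrivial automorphism is involved, and indeed that involution generally does not preserve $\Gamma_{\ba-\{a_i\}}(w)$), which already gives $|w_j^{-1}|_i^{simple}=|w_j|_i^{simple}$ piece by piece.
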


\begin{proof}
A decomposition of $uv$ may be obtained by concatenating optimal
decompositions of $u$ and $v$.  The associated $i$-length of this
decomposition of $uv$ yields the second inequality.  The first
inequality follows from the second inequality by concatenating $uv$
and $v^{-1}$:  $|u|_i^{cr} = |uvv^{-1}|_i^{cr} \leq |uv|_i^{cr} +
|v|_i^{cr} + 1$.
\end{proof}

\begin{lemma}
\label{lem:uwv}
For any words $u$, $v$, and $w$, we have
    $$|uv|_i^{cr} - |w|_i^{cr} - 1 \leq |uwv|_i^{cr} \leq |uv|_i^{cr} + |w|_i^{cr} + 1.$$
\end{lemma}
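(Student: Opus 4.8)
The plan is to reduce Lemma~\ref{lem:uwv} to Lemma~\ref{lem:subwordmult}, which has already done the main combinatorial work. The key observation is that Lemma~\ref{lem:subwordmult} is precisely the special case of Lemma~\ref{lem:uwv} obtained by taking the middle word $w$ to be trivial, and conversely we should be able to ``peel off'' the middle word $w$ from $uwv$ one multiplication at a time.

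First I would establish the right-hand inequality $|uwv|_i^{cr}\leq|uv|_i^{cr}+|w|_i^{cr}+1$. Apply Lemma~\ref{lem:subwordmult} twice: treating $uw$ as a single word and $v$ as a single word gives $|uwv|_i^{cr}\leq|uw|_i^{cr}+|v|_i^{cr}+1$, and then $|uw|_i^{cr}\leq|u|_i^{cr}+|w|_i^{cr}+1$. Combining these yields $|uwv|_i^{cr}\leq|u|_i^{cr}+|w|_i^{cr}+|v|_i^{cr}+2$, which is off by the term $|w|_i^{cr}+1$ from what is wanted---so this naive splitting is too lossy. The cleaner route is to build a decomposition of $uwv$ directly: take an optimal decomposition of $uv$, say $uv=_r p_1^{q_1}\cdots p_m^{q_m}$, and an optimal decomposition of $w$; but the difficulty is that $w$ is inserted in the ``middle'' of $uv$, which may be inside one of the factors $p_j^{q_j}$ rather than at a seam. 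To handle this, I would instead argue symmetrically from the other side. Actually the most economical argument: apply Lemma~\ref{lem:subwordmult} to get $|uwv|_i^{cr}=|u(wv)|_i^{cr}\leq|u|_i^{cr}+|wv|_i^{cr}+1$, but this still has a stray $|u|_i^{cr}$. The right fix is to use the version of Lemma~\ref{lem:subwordmult} that bounds in terms of a product where the two pieces recombine: write $uv$ and note $uwv=(uv)(v^{-1}wv)$, so by Lemma~\ref{lem:subwordmult}, $|uwv|_i^{cr}\leq|uv|_i^{cr}+|v^{-1}wv|_i^{cr}+1$, and since conjugation does not change conjugate reduced $i$-length (the decomposition $v^{-1}wv$ of the conjugate is itself realized by conjugating the $u_j$'s in an optimal decomposition of $w$), we have $|v^{-1}wv|_i^{cr}=|w|_i^{cr}$, giving the upper bound.

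For the left-hand inequality $|uv|_i^{cr}-|w|_i^{cr}-1\leq|uwv|_i^{cr}$, I would apply the upper bound just proved in reverse: $uv=u w^{-1} \cdot w v$ is not directly of the form $u'wv'$, so instead write $uv=(uwv)(v^{-1}w^{-1}v)$ and apply Lemma~\ref{lem:subwordmult}: $|uv|_i^{cr}\leq|uwv|_i^{cr}+|v^{-1}w^{-1}v|_i^{cr}+1=|uwv|_i^{cr}+|w|_i^{cr}+1$, where again conjugation-invariance and the obvious fact $|w^{-1}|_i^{cr}=|w|_i^{cr}$ (reverse and invert an optimal decomposition) are used. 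Rearranging gives the first inequality.

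The only genuine point requiring a word of justification---and the step I expect to be the mild obstacle---is the conjugation-invariance claim $|v^{-1}wv|_i^{cr}=|w|_i^{cr}$ and $|w^{-1}|_i^{cr}=|w|_i^{cr}$. For the former, given an optimal decomposition $w=_r v_1^{u_1}\cdots v_l^{u_l}$, the word $v^{-1}wv$ has the decomposition $v_1^{u_1v}\cdots v_l^{u_lv}$ with the same $v_j$'s, hence the same associated $k=(l-1)+\sum|v_j|_i^{simple}$; and the same argument applied to $v^{-1}wv$ and conjugator $v^{-1}$ gives the reverse inequality. For the latter, $w^{-1}=_r (v_l^{-1})^{u_l}\cdots(v_1^{-1})^{u_1}$, and $|v_j^{-1}|_i^{simple}=|v_j|_i^{simple}$ since reversing-and-inverting a word reverses-and-inverts each two-letter block, which is a graph automorphism of the Whitehead graph (it sends the edge from $a_p$ to $a_q^{-1}$ to the edge from $a_q$ to $a_p^{-1}$) and hence preserves the property of having no cut vertex. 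Both of these are routine and can be stated in a sentence or two. Once they are in hand, the lemma is immediate from two applications of Lemma~\ref{lem:subwordmult}.
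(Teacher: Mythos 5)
Your proof is correct, and it takes a genuinely different and considerably shorter route than the paper's. The paper proves the upper bound directly: it takes an optimal decomposition $\omega = v_1^{u_1}\cdots v_l^{u_l}$ of $uv$, tracks a free-reduction process from $\omega$ down to $uv$ in order to locate the ``seam'' separating the $u$-half from the $v$-half of $\omega$, determines which factor $u_p^{-1}v_pu_p$ the seam lands in (possibly splitting $v_p$ into two pieces at a cost of $+1$), and then inserts a suitably conjugated optimal decomposition of $w$ at that seam. Your argument sidesteps this bookkeeping entirely via the identity $uwv =_r (uv)\cdot(v^{-1}wv)$, reducing the statement to Lemma~\ref{lem:subwordmult} together with the conjugation-invariance $|v^{-1}wv|_i^{cr} = |w|_i^{cr}$. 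That invariance is immediate from the definition (which uses $=_r$): a decomposition of $w$ with factors $v_j$ and conjugators $u_j$ becomes a decomposition of $v^{-1}wv$ with the same $v_j$'s and conjugators $u_jv$, so both $\leq$ inequalities hold; notably the paper itself invokes exactly this fact in the proof of Lemma~\ref{lem:Xtox}. Your lower bound additionally uses $|w^{-1}|_i^{cr}=|w|_i^{cr}$, which you justify correctly by noting that inverting a word preserves its Whitehead graph and hence each $|v_j^{-1}|_i^{simple}=|v_j|_i^{simple}$; the paper's own lower-bound step, $|uv|_i^{cr}=|uww^{-1}v|_i^{cr}\leq|uwv|_i^{cr}+|w|_i^{cr}+1$, implicitly relies on the same fact. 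The only change I'd suggest is to excise the exploratory middle portion (the abandoned attempts via $|uw|_i^{cr}+|v|_i^{cr}+1$ and $|u|_i^{cr}+|wv|_i^{cr}+1$), since the final conjugation argument stands cleanly on its own.
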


\begin{proof}
We prove that $|uwv|_i^{cr} \leq |uv|_i^{cr} + |w|^{cr}_i + 1$; the
first inequality will then follow by observing $|uv|_i^{cr} =
|uww^{-1}v|_i^{cr}  \leq |uwv|_i^{cr} + |w|_i^{cr} + 1$.

Consider an optimal decomposition of $uv$,
    $$\omega = v_1^{u_1}\dots v_l^{u_l},$$
so that $|uv|_i^{cr} = \sum_j |v_j|_i^{simple}+l-1$.  We will alter
this optimal decomposition of $uv$ to obtain a decomposition of
$uwv$, at the price of possibly introducing a bounded amount of
additional conjugate reduced $i$-length.  The lemma will then
follow.

The decomposition $\omega$ freely reduces to $uv$, or put another way the word $\omega$ is obtained from $uv$ by a sequence of words, each of which differs from the previous one by inserting a single canceling pair of letters.  Throughout this process, we may split each word into two halves, the left half and the right half, as follows.  Begin by declaring $u$ is the left half of $uv$ and $v$ is the right half.  If a canceling pair is inserted into the middle of either half of a word, insert the canceling pair in the appropriate half to obtain the new halves.  If a canceling pair $b^{-1}b$ is inserted between the left half and the right half, add $b^{-1}$ to the left half and add $b$ to the right half to obtain the new halves.

Let $p$ be the smallest index such that the left half of $\omega$ is contained in $u_1^{-1}v_1u_1\dots u_p^{-1}v_pu_p$.

The split between halves of $\omega$ will either be in $u_p^{-1}$, $v_p$, $u_p$ or between them.

If the split occurs in $v_p$ then at the price of splitting
$v_p^{u_p}$ into a product $(v_p')^{u_p}(v_p'')^{u_p}$, we may
assume that the left half of $\omega$ is equal to
$u_1^{-1}v_1u_1\dots u_p^{-1}v_pu_p$, making the right half of
$\omega$ equal to $v_{p+1}^{u_{p+1}}\dots v_l^{u_l}$.  Possibly
splitting $v_p$ could increase associated conjugate reduced
$i$-length by 1.

If the split happens in, immediately before, or immediately after $u_p$ then $u_p=u_p'u_p''$ where $u_p'$ is in the left half of $\omega$ and $u_p''$ is in the right half of $\omega$. Note that $u_p'$ or $u_p''$ can be trivial.

Consider inserting $w$ into $\omega$, between $u_p'$ and $u_p''$.
Any optimal decomposition of $w$ conjugated by $(u_p'')^{-1}$
inserted into $\omega$ then yields a decomposition of $uwv$.  The
associated conjugate reduced $i$-length is
    $$|w|_i^{cr} + l + \sum_j |v_j|_i^{simple} \leq |w|_i^{cr} + |uv|_i^{cr} + 1.$$

The case when the split in $\omega$ occurs in, immediately before, or immediately after $u_p^{-1}$ is considered similarly.

This proves the upper bound, and finishes the proof.

\end{proof}

%%%%%%%%%%%%%%%%%%%%%%%%%
\subsection{Properties of Full $i$-Length}~\\
We now consider properties of full $i$-length -- that is, how
$i$-length behaves for a basis.

\begin{lemma}\label{lem:kis0}
For any basis $\bx$ of $\F$, any $x \in \bx$, and any subword $w$
of an $i$-chunk of $\alpha_\bx x$, we have $|w|_i^{cr} = 0$.
\end{lemma}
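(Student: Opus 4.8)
The key point is that $\alpha_\bx x$ and its $i$-chunks are, after deleting the $a_i$ letters, pieces of a basis of the free group (or rather, of the subgroup obtained by forgetting $a_i$), and hence by Stallings' theorem (Theorem~\ref{thm:cutvertex}) any subword of such an $i$-chunk already has a cut vertex in its Whitehead graph, forcing its simple $i$-length — and therefore its conjugate reduced $i$-length, by Lemma~\ref{lem:nocutvertex} — to be zero. The plan is to make precise the sense in which an $i$-chunk ``is'' a piece of a basis and then invoke separability.

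First I would set up notation: write $\by = \alpha_\bx \bx$, so that by the construction preceding the definition of full $i$-length we have $w_L(\by) = w_R(\by) = 1$, i.e.\ in each cyclically reduced $\tilde y$, $y \in \by$, every occurrence of $a_i^{\pm 1}$ is isolated with nothing forced on either side. The $i$-chunks of $x$ with respect to $\alpha_\bx$ are then precisely the maximal subwords of $\widetilde{\alpha_\bx x}$ containing no $a_i^{\pm 1}$. Next I would argue that the collection of all $i$-chunks of all $y \in \by$, together with $a_i$ itself, forms a set of elements of $\F$ whose union generates a free factor of $\F$, and more importantly that this set is \emph{separable}: indeed, consider the free factorization $\F = \langle a_1,\dots,\widehat{a_i},\dots,a_n\rangle * \langle a_i\rangle$. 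Each $i$-chunk, being a word in $(\ba - \{a_i\})^{\pm 1}$, is literally an element of the first factor; and $a_i$ lies in the second. Hence the set consisting of all the $i$-chunks and $a_i$ can each be conjugated (trivially) into one of the two factors, so it is separable by definition. Therefore, by Theorem~\ref{thm:cutvertex}, the augmented Whitehead graph $\hat\Gamma_\ba$ of this whole set has a cut vertex — but I actually want the \emph{non-augmented} Whitehead graph of a single $i$-chunk over the alphabet $\ba - \{a_i\}$ to have a cut vertex, so a little care is needed here.

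The sharper tool is this: I claim each individual $i$-chunk $c$, as a freely (indeed cyclically, at its endpoints) reduced word over $(\ba-\{a_i\})^{\pm1}$, is a subword of a basis element of $\F$ in a way that lets Stallings apply. Concretely, $\widetilde{\alpha_\bx x}$ is a cyclic word which, read linearly, is a conjugate of an element of the basis $\alpha_\bx\bx$ of $\F$; it is separable, hence so is its cyclic reduction, and the Whitehead graph $\Gamma_\ba(\widetilde{\alpha_\bx x})$ has a cut vertex by Theorem~\ref{thm:cutvertex} (no augmentation needed, since a cyclically reduced separable word has a cut vertex in its unaugmented Whitehead graph — this is the content of Stallings' theorem as the paper uses it). Now restrict attention to the vertices $\ba - \{a_i\}$: the subgraph of $\Gamma_\ba(\widetilde{\alpha_\bx x})$ induced on $(\ba - \{a_i\})^{\pm1}$ still has a cut vertex after removing the $a_i^{\pm1}$-vertices and their incident edges — and this induced subgraph contains, as a subgraph, $\Gamma_{\ba - \{a_i\}}(c)$ for each $i$-chunk $c$. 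Since a subgraph of a graph with a cut vertex... is \emph{not} automatically a graph with a cut vertex, I instead argue directly on the chunk: the edges of $\Gamma_{\ba-\{a_i\}}(c)$ are exactly the consecutive-letter pairs \emph{within} $c$, and because the $i$-chunk is delimited by occurrences of $a_i^{\pm1}$ which are not cyclically flanked by anything (as $w_L = w_R = 1$), applying a suitable Whitehead automorphism supported away from $a_i$ would reduce the length of $\alpha_\bx x$ if $\Gamma_{\ba-\{a_i\}}(c)$ had no cut vertex — contradicting that $\alpha_\bx x$ is a subword-system of a basis. I would instead phrase it cleanly: the $i$-chunks of $\alpha_\bx x$ together with $a_i$ form a separable set (shown above), so by Theorem~\ref{thm:cutvertex} the augmented Whitehead graph of this set has a cut vertex; because $a_i$ contributes only the loop at $a_i$ (its first and last letter being $a_i$ itself) and each $i$-chunk, being a subword of a cyclically reduced word with $a_i$ on both sides, contributes no loops at vertices of $\ba - \{a_i\}$ and its augmenting edge goes to an $a_i^{\pm1}$ vertex, the restriction to $\ba - \{a_i\}$ of this cut-vertex structure exhibits a cut vertex of $\Gamma_{\ba-\{a_i\}}(c)$ for \emph{some} chunk $c$; iterating/peeling off chunks one at a time (each time the remaining set is still separable) shows every chunk $c$ has $\Gamma_{\ba-\{a_i\}}(c)$ with a cut vertex, hence $|c|_i^{simple} = 0$, hence $|w|_i^{simple} = 0$ for every subword $w$ of $c$ (a subword of a cut-vertex word is a cut-vertex word, as $c$ is freely reduced), hence $|w|_i^{cr} = 0$ by Lemma~\ref{lem:nocutvertex}.

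\textbf{Main obstacle.} The delicate point — and where I expect the real work to be — is the passage from ``the set of $i$-chunks (plus $a_i$) is separable, so its augmented Whitehead graph has a cut vertex'' to ``each individual $i$-chunk has, on its own, a cut vertex in its unaugmented $(\ba - \{a_i\})$-Whitehead graph.'' Stallings' theorem gives a cut vertex of the Whitehead graph of the whole set, which a priori could be a vertex lying only on edges coming from one particular chunk or from the loop at $a_i$; one must leverage the normalization $w_L(\alpha_\bx\bx) = w_R(\alpha_\bx\bx) = 1$ (so there are no forced flanking letters around the $a_i$'s to ``glue'' chunks together through the $a_i$-vertices) to argue that the cut-vertex structure localizes to the individual chunks, and then argue inductively by removing a chunk at a time — each intermediate set remaining separable because we can still conjugate all its members into the two factors $\langle a_1,\dots,\widehat{a_i},\dots,a_n\rangle * \langle a_i\rangle$. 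Once the localization is handled, the remaining steps (subword of a freely reduced cut-vertex word still has a cut vertex; $|\cdot|_i^{simple}=0 \Rightarrow |\cdot|_i^{cr}=0$ via Lemma~\ref{lem:nocutvertex}) are immediate.
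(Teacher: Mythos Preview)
Your proposal has a genuine gap at exactly the point you flag as the ``main obstacle'' --- and the patch you suggest does not close it.

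The separability you invoke is vacuous. \emph{Any} finite set of words in $(\ba - \{a_i\})^{\pm 1}$, together with $a_i$, is separable via the factorization $\langle a_1,\dots,\widehat{a_i},\dots,a_n\rangle * \langle a_i\rangle$; this is true regardless of whether those words are $i$-chunks of a basis or completely arbitrary. Hence Stallings' theorem applied to your set of chunks-plus-$a_i$ tells you only that the augmented Whitehead graph over $\ba$ has a cut vertex --- and that cut vertex can simply be $a_i^{\pm 1}$, whose component is $\{a_i,a_i^{-1}\}$. This says nothing at all about $\Gamma_{\ba - \{a_i\}}(c)$ for any chunk $c$. Concretely: take $c = a_1^2 a_2^2 \cdots a_{n-1}^2$; the set $\{c, a_i\}$ is separable, yet $\Gamma_{\ba-\{a_i\}}(c)$ has no cut vertex. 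Your ``peeling off one chunk at a time'' does not help, because at every stage the residual set is separable for the same trivial reason and you extract no new information.

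What is missing is any genuine use of the fact that $\alpha_\bx \bx$ is a \emph{basis} of $\F$, beyond the separability of one element. The paper's argument proceeds quite differently: it applies Stallings to the full cyclically reduced basis $\by = \widetilde{\alpha_\bx \bx}$, obtaining a cut vertex in $\hat\Gamma_\ba(\by)$, and then --- assuming for contradiction that some $i$-chunk has positive conjugate reduced $i$-length, so the subgraph on $(\ba-\{a_i\})^{\pm 1}$ has no cut vertex --- does a case analysis on the neighborhoods of $a_i^{\pm 1}$. The normalizations $w_L = w_R = 1$ \emph{and} $w_C = 1$ (you omit the latter, but it is essential) are used to rule out cases; one remaining case requires invoking Nielsen reduction (Proposition~\ref{prop:nielsen}) to show that if $a_i$ appears as a standalone element of $\by$ then $\by - \{a_i\}$ is separable in $\langle \ba - \{a_i\}\rangle$, giving a second application of Stallings and the contradiction. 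None of this structure is present in your outline.
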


It follows that this result holds for any subset of any basis as well.

\begin{proof}

Throughout this proof, for sake of simplicity of notation, we write $\alpha$ for $\alpha_\bx$.  As $\bx$ is a basis, so is $\alpha\bx$. By definition, the $i$-length of an element or of a set of elements is invariant under conjugation, where we may even conjugate different elements in the set by different conjugators. Therefore cyclic reduction of all elements of $\alpha\bx$ does not change any $i$-length involved. Let $\by$ be the set $\widetilde{\alpha\bx}$ obtained from $\alpha\bx$ by cyclically reducing every element. Since $\alpha\bx$ is a basis, $\by$ is a separable set. Therefore by Theorem~\ref{thm:cutvertex} the augmented Whitehead graph $\hat\Gamma_\ba(\by)$ of $\by$ has a cut vertex. Note that this graph does not have vertex loops since each word in $\by$ is cyclically reduced.

Proof by contradiction:  assume that there exists some subword $w$
of an $i$-chunk of $\alpha x$ with $|w|_i^{cr}> 0$.  As $|w|_i^{cr}
> 0$, by Lemma \ref{lem:nocutvertex}, the subgraph $\Gamma'$ of
$\hat\Gamma_\ba(\by)$ on the vertex set corresponding to $\ba -
\{a_i\}$ has no cut vertex (since there are no vertex loops in the
graph). It remains to consider the vertices corresponding to
$a_i^{\pm 1}$ in $\hat\Gamma_\ba(\by)$.  Since $a_i$ must appear as
a letter in $\bx$ and, hence, in $\by$, by the definition of
Whitehead graph each of $a_i$, $a_i^{-1}$ has at least one neighbor
in $\hat\Gamma_\ba(\by)$.

Consider the case when either $a_i$ or $a_i^{-1}$ has exactly one neighbor in $\hat\Gamma_\ba(\by)$. Without loss of generality, assume $a_i$ has exactly one neighbor. If the neighbor $b$ of $a_i$ were in $\Gamma'$, we would contradict the definition of $w_R(\bx)$:  $b^{-1}$ should have been appended to $w_R(\bx)$.

Thus, the only neighbor of $a_i$ must be $a_i^{-1}$.  In this case, each occurrence of $a$ (resp. $a^{-1}$) in $\alpha \bx$ must be cyclically followed (resp. preceded) by $a$ (resp. $a^{-1}$). The only way for this to occur is if every element of $\by$ involving $a_i$ is some power of $a_i$.  But elements of $\by$ are primitives in $\F$ as they are conjugates of basis elements of $\alpha\bx$. Therefore this power can only be  $a_i^{\pm 1}$. Moreover, if there are two elements in $\by$ of the form $a_i^{\pm1}$, then there should be two conjugates of $a$ or $a^{-1}$ in $\alpha\bx$, which is impossible because in this case we can obtain a commutator as a primitive element of $\F$.  Thus, we may assume without loss of generality that $a_i$ is an element of $\by$ and no other element of $\by$ contains an occurrence of $a_i$.

Since $\by$ was obtained from $\alpha\bx$ by conjugating its
elements, the structure of $\alpha\bx$ is as follows. There is one
element of the form $a_i^w$ for some $w\in \F$, whose conjugate in
$\by$ is $a_i$. All other elements in $\alpha\bx$ are conjugates of
words in $\by$ not involving $a_i$ by conjugators that may generally
contain $a_i$. Then $\bz=(\alpha\bx)^{w^{-1}}$ is a basis for $\F$
one of whose elements is $a_i$ and the others are conjugates of
words in $\by$ where the words in $\by$ do not involve $a_i$ (but
the conjugators could).

By Proposition~\ref{prop:nielsen} there is a sequence $(\delta_j),1\leq j\leq t$ of elementary Nielsen transformations taking $\bz$ to the standard basis $\ba$ obtained from the Nielsen reduction process. In other words,
    \[(\bz)\Bigl(\prod_{j=1}^t\delta_j\Bigr)=\ba.\]
Since the Nielsen reduction process does not increase the length of basis elements, the element $a_i$ in $\bz$ will be invariant under each transvection $\delta_j$. Let $S=\{j:\ \delta_j\ \text{does not
involve}\ a_i\}$ and consider the basis
    \[\bu=(\ba)\Bigl(\prod_{j\in S}\delta_j\Bigr)^{-1}\]
for $\F$.  By construction this basis is obtained from $\bz$ by removing all occurrences of $a_i$ from $\bz$ except a single occurrence of $a_i$ as an element of $\bz$. This implies that all other elements of $\bu$ form a basis for $\la\ba-\{a_i\}\ra$. On the other hand, elements of the basis $\bu$ are conjugates of elements of $\bz$. Therefore cyclic reduction of elements in $\bu$ gives the set $\by$ up to cyclic conjugation. But then $\by -\{a_i\}$ is a separable set in $\la\ba - \{a_i\}\ra$, and is such that $\hat\Gamma_{\ba - \{a_i\}}(\by - \{a_i\})$ has no cut vertex. This contradicts Theorem \ref{thm:cutvertex}, and shows that neither $a_i$ nor $a_i^{-1}$ may have exactly one neighbor in $\hat\Gamma_\ba(\by)$.

We are left to consider the remaining case, when both $a_i$ and $a_i^{-1}$ have at least two neighbors in $\hat\Gamma_\ba(\by)$.  As $\Gamma'$ contains no cut vertex, the only way for $\hat\Gamma_\ba(\by)$ to still have a cut vertex in this situation is if $a_i$ and $a_i^{-1}$ both have exactly two neighbors in $\hat\Gamma_\ba(\by)$, both are neighbors of each other, and both share a common third neighbor, say $b$.  This means that every occurrence of $a_i^k$, $k \neq 0$, in $\alpha \bx$ appears by itself in $\alpha \bx$ or appears conjugated by $b^{-1}$.  But this contradicts the definition of $w_C(\bx)$:  the letter $b^{-1}$ should have been appended to $w_C$.

\end{proof}

As corollaries of the above lemma and definition of the $i$-length
of a set we get the following statements.

\begin{corollary}
\label{cor:noai}
For any basis $\bx$ of $\F$ and any $x \in \bx$,  $k(\alpha_x x) \leq |\alpha_{\bx} x|_i = 0$, and so
    $$|\bx|_i = |\wR{\bx}\wL{\bx}|^{cr}_i.$$
\end{corollary}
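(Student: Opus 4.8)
The plan is to derive this corollary essentially as a bookkeeping consequence of Lemma~\ref{lem:kis0} together with the definition of full $i$-length. Recall that for a set $\by$ of words, $|\by|_i := k(\by) + |\wR{\by}\wL{\by}|_i^{cr}$, where $k(\by)$ is the maximal conjugate reduced $i$-length of an $i$-chunk of $\alpha_{\by}y$ over all $y \in \by$. Applying this with $\by$ a basis $\bx$ of $\F$, I first want to observe that every $i$-chunk of $\alpha_{\bx}x$ is, in particular, a subword of an $i$-chunk of $\alpha_{\bx}x$ (namely itself). So Lemma~\ref{lem:kis0}, applied with $w$ the full $i$-chunk, gives $|w|_i^{cr} = 0$ for each such $i$-chunk $w$ of $\alpha_{\bx}x$, and hence the maximum $k(\bx)$ of these values over all chunks and all $x \in \bx$ is $0$.

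Next I would handle the displayed equation $|\bx|_i = |\wR{\bx}\wL{\bx}|_i^{cr}$: since $k(\bx) = 0$, the definition of full $i$-length immediately yields $|\bx|_i = 0 + |\wR{\bx}\wL{\bx}|_i^{cr} = |\wR{\bx}\wL{\bx}|_i^{cr}$. For the chain of (in)equalities $k(\alpha_x x) \leq |\alpha_{\bx}x|_i = 0$ in the corollary statement, I read $|\alpha_{\bx}x|_i$ as the full $i$-length of the singleton set $\{\alpha_{\bx}x\}$ (using the singleton abuse of notation from the definitions). For a singleton $\{z\}$ with $z = \alpha_{\bx}x$, we have $w_L(z) = w_R(z) = 1$ by the construction of $\alpha_{\bx}$ (the paper explicitly notes $w_L(\alpha\by) = w_R(\alpha\by) = 1$), so $|\wR{z}\wL{z}|_i^{cr} = |1|_i^{cr} = 0$, and $\alpha_{\{z\}} $ is the identity so the $i$-chunks of $\alpha_{\{z\}}z$ are just the $i$-chunks of $z = \alpha_{\bx}x$; each has conjugate reduced $i$-length $0$ by Lemma~\ref{lem:kis0}. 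Hence $k(\alpha_x x) = 0$ and $|\alpha_{\bx}x|_i = 0$, which establishes the chain.

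I do not anticipate a serious obstacle here: the content is entirely carried by Lemma~\ref{lem:kis0}, and the only care required is in unwinding the layered definition of full $i$-length and the singleton notational conventions, making sure that ``$i$-chunk of $\alpha_{\bx}x$'' in the definition of $k(\bx)$ matches the hypothesis ``subword of an $i$-chunk of $\alpha_\bx x$'' in Lemma~\ref{lem:kis0} (it does, trivially). The one place to be slightly attentive is that the corollary mixes $\alpha_x$ and $\alpha_{\bx}$ notation; I would state at the outset that $\alpha_{\bx}x$ has trivial $w_L$ and $w_R$ so that reapplying the $\alpha$-normalization to it changes nothing, which is what makes $k(\alpha_x x)$, $k(\alpha_{\bx}x)$, and the quantity appearing in Lemma~\ref{lem:kis0} all coincide and equal $0$.
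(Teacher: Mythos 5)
Your derivation of the displayed equality $|\bx|_i = |\wR{\bx}\wL{\bx}|_i^{cr}$ is correct and is exactly what the paper intends: Lemma~\ref{lem:kis0}, applied with $w$ equal to each $i$-chunk of $\alpha_{\bx}x$ itself, gives $k(\bx)=0$, and then the definition of full $i$-length closes the argument. That portion of your write-up is sound.

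What does not hold up is your justification of the subsidiary claim $|\alpha_{\bx}x|_i=0$. You assert that for the singleton $\{z\}$ with $z=\alpha_{\bx}x$ one has $w_L(z)=w_R(z)=1$ ``by the construction of $\alpha_{\bx}$'', citing the paper's remark $w_L(\alpha\by)=w_R(\alpha\by)=1$. But that remark is about the \emph{entire set} $\alpha\by$: by definition $w_L$ and $w_R$ are the longest common cyclic predecessor/successor of $a_i$ taken over \emph{all} elements of the set, so restricting to a single element removes constraints and can make them strictly longer. A concrete instance: with $n=3$, $i=3$, $\bx=\{a_3,\ a_2a_3a_1,\ a_1\}$, the map $\alpha_{\bx}$ is the identity (the lone element $a_3$ forces $\wL{\bx}=\wR{\bx}=1$), yet $w_L(\{\alpha_{\bx}(a_2a_3a_1)\})=a_1a_2$ is nontrivial; hence $\alpha_{\{z\}}$ is not the identity and the $i$-chunks of $\alpha_{\{z\}}z$ are not those of $z$. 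So the step ``$\alpha_{\{z\}}$ is the identity'' and the step ``$|\wR{z}\wL{z}|_i^{cr}=|1|_i^{cr}=0$'' are both unjustified. Note that the observation you wanted does hold if one reads the subscript literally as written, $\alpha_x=\alpha_{\{x\}}$ applied to the \emph{original} $x$: then $w_L(\{\alpha_{\{x\}}x\})=w_R(\{\alpha_{\{x\}}x\})=1$ genuinely holds by construction, and $k(\alpha_x x)=0$ follows because the $i$-chunks of $\alpha_{\{x\}}x$ are subwords of $i$-chunks of $\alpha_{\bx}x$ (the singleton normalization strips at least as much as the basis normalization), to which Lemma~\ref{lem:kis0} applies. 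The gap is localized to this piece of the chain; the main conclusion of the corollary, which only needs $k(\bx)=0$, you have proved correctly.
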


\begin{lemma}\label{lem:Xtox}
For any basis $\bx$ and any $x \in \bx$ containing $a_i$,
    $$|\bx|_i - 2 \leq |x|_i \leq |\bx|_i + 2.$$
\end{lemma}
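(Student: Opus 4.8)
The plan is to compare the two quantities $|\bx|_i$ and $|x|_i$ by unwinding their definitions and using Corollary \ref{cor:noai} to kill the ``$k$''-terms. First I would apply Corollary \ref{cor:noai} to the whole basis to get $|\bx|_i = |\wR{\bx}\wL{\bx}|_i^{cr}$, and then separately to the singleton $\{x\}$ (which is a legitimate set, and indeed the corollary is stated for an arbitrary $x \in \bx$ viewed inside the basis) to get $|x|_i := |\{x\}|_i = |\wR{x}\wL{x}|_i^{cr}$; here I must be careful that $\wR{x}, \wL{x}$ mean $\wR{\{x\}}, \wL{\{x\}}$, which need not coincide with $\wR{\bx}, \wL{\bx}$ because the latter are computed as the longest common prefix/suffix words over \emph{all} elements of the basis simultaneously. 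So the heart of the matter is to relate the conjugate reduced $i$-length of $\wR{\bx}\wL{\bx}$ to that of $\wR{x}\wL{x}$.

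The key observation I would establish is that $\wR{\bx}$ is a prefix of $\wR{x}$ and $\wL{\bx}$ is a suffix of $\wL{x}$ (since the constraint defining $\wR{x}$ only involves the single word $\tilde x$, whereas $\wR{\bx}$ must satisfy the constraint for every element of $\tilde\bx$, including $\tilde x$ — so $\wR{\bx}$ is forced to be a sub-word of $\wR{x}$, and similarly on the left). Hence we can write $\wR{x} = \wR{\bx}\, p$ and $\wL{x} = q\, \wL{\bx}$ for some words $p, q$ over $(\ba - \{a_i\})^{\pm1}$. The crucial extra input is that $p$ and $q$ themselves are subwords of an $i$-chunk of $\alpha_\bx x$: indeed, the occurrences of $a_i$ in $\tilde x$ are cyclically followed by $\wR{x} = \wR{\bx}p$ and preceded by $\wL{x} = q\wL{\bx}$, so after applying $\alpha_\bx$ (which strips $\wL{\bx}$ and $\wR{\bx}$ — this is exactly the computation ``the preimage of $a_i^k$ under $\alpha$ is $w_C^{-1}(\wL{\bx}a_i\wR{\bx})^k w_C$'' read backwards), the words $p$ and $q$ survive as subwords of the $i$-chunk. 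By Lemma \ref{lem:kis0}, therefore $|p|_i^{cr} = |q|_i^{cr} = 0$, and likewise $|qp|_i^{cr} = 0$ since $qp$ (or a cyclic neighborhood of it) is also a subword of an $i$-chunk.

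Now I would run Lemma \ref{lem:uwv} twice to insert/delete $p$ and $q$. Starting from $\wR{x}\wL{x} = \wR{\bx}\, p\, q\, \wL{\bx}$, Lemma \ref{lem:uwv} with the ``$w$'' slot equal to $pq$ (or applied once with $w = p$ and once with $w = q$) gives
\[
|\wR{\bx}\wL{\bx}|_i^{cr} - |pq|_i^{cr} - 1 \;\leq\; |\wR{x}\wL{x}|_i^{cr} \;\leq\; |\wR{\bx}\wL{\bx}|_i^{cr} + |pq|_i^{cr} + 1,
\]
and since $|pq|_i^{cr} = 0$ this collapses to $\bigl||\wR{x}\wL{x}|_i^{cr} - |\wR{\bx}\wL{\bx}|_i^{cr}\bigr| \leq 1$; being slightly more careful and applying Lemma \ref{lem:uwv} once for $p$ and once for $q$ gives the bound $2$ that the statement asks for. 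Combined with the two applications of Corollary \ref{cor:noai} this yields $|\bx|_i - 2 \leq |x|_i \leq |\bx|_i + 2$, as desired.

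The main obstacle I anticipate is the bookkeeping around $w_C$ and the precise relationship between $\alpha_\bx$ and $\alpha_{\{x\}}$: the definition of the $\alpha$ automorphism folds in $w_C$ as well as $w_L, w_R$, and $w_C(\bx)$ versus $w_C(\{x\})$ could differ, potentially contributing to the $i$-chunk structure in a way that needs to be tracked. I would handle this by noting that $w_C$ only affects how $a_i^k$-blocks are conjugated and, by the computation displayed after the definition of full $i$-length, any discrepancy in $w_C$ also lands inside an $i$-chunk of $\alpha_\bx x$, hence contributes $0$ to conjugate reduced $i$-length by Lemma \ref{lem:kis0}; so it can be absorbed into the same Lemma \ref{lem:uwv} estimate without changing the constant $2$. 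The other place to be cautious is the cyclic-versus-linear subtlety — $\wR{\bx}\wL{\bx}$ is a linear word but the constraints are cyclic — which is why the constant is $2$ rather than $0$, and a cyclic-conjugation argument via Lemma \ref{lem:simple_conj} (or simply absorbing it into Lemma \ref{lem:uwv}) closes that gap.
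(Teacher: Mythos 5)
Your high-level strategy — reduce via Corollary \ref{cor:noai} to comparing $|\wR{\bx}\wL{\bx}|_i^{cr}$ with $|\wR{x}\wL{x}|_i^{cr}$, then peel off the discrepancy with Lemma \ref{lem:kis0} and Lemma \ref{lem:uwv} — matches the paper's, and your constant bookkeeping would give $\pm 2$. But two of the claimed ``key observations'' are false in general, and the places where they fail are exactly the cases the paper singles out.

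First, $\wR{\bx}$ need \emph{not} be a prefix of $\wR{x}$. The definition of $w_R$ is constrained by ``no occurrence of $w_R$ intersects any occurrence of $w_L$,'' and $\wL{x}$ can be strictly longer than $\wL{\bx}$. In the extreme case that $\alpha'_\bx x$ is a power of $a_i$, one gets $\wL{x} = \wR{\bx}\wL{\bx}$ and $\wR{x}$ trivial, so your decomposition $\wR{x} = \wR{\bx}\,p$ simply doesn't exist. Second, and more seriously: the words you call $p$ and $q$ are \emph{not} in general subwords of an $i$-chunk of $\alpha_\bx x$, because they can contain $\wC{\bx}^{\pm1}$. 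The automorphism $\alpha_\bx$ strips $\wC{\bx}$ from around the $a_i$-blocks, so $\wC{\bx}$ is not present in $\alpha_\bx x$ at all and cannot ``land inside an $i$-chunk,'' contrary to your proposed fix. What is actually true (and what the paper exploits in its Case 3) is that the discrepancy has the form $\wC{\bx}^{\mp1}\,w_1 w_2\,\wC{\bx}^{\pm1}$ with $w_1, w_2$ genuine subwords of an $i$-chunk; the $\wC{\bx}$ factors are then eliminated not by Lemma \ref{lem:kis0} but by the conjugation-invariance of $|\cdot|_i^{cr}$, after which Lemma \ref{lem:subwordmult} gives $|w_1 w_2|_i^{cr}\le 1$ and Lemma \ref{lem:uwv} adds one more, for $+2$ total. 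So your plan needs to be turned into the paper's explicit three-way case analysis on how occurrences of $a_i$ in $\alpha'_\bx x$ interact with $\wC{\bx}$ (no interaction; some but not all; all), with the conjugation-invariance step made explicit; as written, the ``$p,q$ are $i$-chunk subwords'' claim and the ``$w_C$ lands in an $i$-chunk'' claim are gaps.
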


\begin{proof}

By Lemma \ref{lem:kis0}, for any $x \in \bx$,  $|\alpha_\bx
x|_i^{cr} = 0$.  Without loss of generality, as $i$-length is
unaffected by conjugation assume that $x$ is such that all of $x$,
$\alpha'_\bx x$, and $\alpha_\bx x$ are cyclically reduced.  For
simplicity of notation, let $\alpha' := \alpha'_\bx$.  Note that
every occurrence of $a_i$ in $\alpha' x$ occurs in a subword of
$\alpha' x$ in at least one of the following four forms: $a_i$,
$\wC{\bx}^{-1}a_i$, $a_i\wC{\bx}$, $\wC{\bx}^{-1}a_i\wC{\bx}$.
Similarly, every occurrence of $a_i^{-1}$ in $\alpha'x$ occurs in a
subword of $\alpha' x$ in at least one of the forms:  $a_i^{-1}$,
$\wC{\bx}^{-1}a_i^{-1}$, $a_i^{-1}\wC{\bx}$,
$\wC{\bx}^{-1}a_i^{-1}\wC{\bx}$.

Consider the following possible cases.

\begin{enumerate}
\item[Case 1.] The word $\alpha' x$ is a power of $a_i$, so that no occurrence of $a_i$ (or its inverse) appears in $x$ multiplied by $\wC{\bx}$ (or its inverse).  In this case $\wL{x}=\wR{\bx}\wL{\bx}$ and $\wR{x}$ is trivial, therefore $\wR{x}\wL{x}=\wR{\bx}\wL{\bx}$ and $|x|_i=|\bx|_i$ by Corollary~\ref{cor:noai}.

\item[Case 2.] Some occurrences of $a_i$ (or its inverse) in $\alpha' x$ occur in subwords of $\alpha' x$ of the form $\wC{\bx}^{-1}a_i\wC{\bx}$ (resp. $\wC{\bx}^{-1}a_i^{-1}\wC{\bx}$), while some do not. Note that the last letter in $\wC{\bx}$ must differ from the last letter in $\wR{\bx}$ since these letters do not cancel in $\wR{\bx}\wC{\bx}^{-1}$. But this implies that $\wL{x}=\wL{\bx}$ and $\wR{x}=\wR{\bx}$, so $k(x) = k(\bx)$, again yielding $|x|_i=|\bx|_i$.

\item[Case 3.] Every $a_i$ (resp. $a_i^{-1}$) in $x$ occurs in $\alpha' x$ in a subword of $\alpha' x$ of the form $\wC{\bx}^{-1}a_i\wC{\bx}$ (resp. $\wC{\bx}^{-1}a_i^{-1}\wC{\bx}$). Then $\wL{x}$ contains $\wC{\bx}^{-1}\wL{\bx}$ as a terminal segment.  It may also contain some portion $w_2$ of an $i$-chunk of $\alpha_\bx(x)$ of zero $i$-length by Lemma~\ref{lem:kis0}, and finally it may contain some portion of $\wR{\bx}\wC{\bx}$. In any case $\wR{x}$ will contain the rest of $\wR{\bx}\wC{\bx}$ and possibly some portion $w_1$ of an $i$-chunk of $\alpha_\bx(x)$ also of zero $i$-length. Therefore
    \[\wR{x}\wL{x}=\wR{\bx}\wC{\bx}w_1w_2\wC{\bx}^{-1}\wL{\bx},\]
where $w_1$ and $w_2$ may be trivial.  It follows that $k(x) \leq
k(\bx) = 0$, so $|x|_i = |\wR{x}\wL{x}|_i^{cr}$.  Therefore applying
Lemmas~\ref{lem:uwv} and \ref{lem:subwordmult}, and taking into
account that conjugation does not change the conjugate reduced
$i$-length of a subword, we obtain
\begin{eqnarray*}
    |x|_i=|\wR{x}\wL{x}|^{cr}_i
        &=&     |\wR{\bx}\wC{\bx}^{-1}w_1w_2\wC{\bx}\wL{\bx}|^{cr}_i\\
        &\leq&  |\wR{\bx}\wL{\bx}|^{cr}_i+|\wC{\bx}^{-1}w_1w_2\wC{\bx}|^{cr}_i+1\\
        &=&     |\wR{\bx}\wL{\bx}|^{cr}_i+|w_1w_2|^{cr}_i+1\\
        &\leq&  |\wR{\bx}\wL{\bx}|^{cr}_i+|w_1|^{cr}_i+|w_2|^{cr}_i+2\\
        &=&     |\wR{\bx}\wL{\bx}|^{cr}_i+2=|\bx|_i+2.
\end{eqnarray*}

Similarly we derive the first inequality
    \begin{eqnarray*}
    |\bx|_i=|\wR{\bx}\wL{\bx}|^{cr}_i
        &=&     |\wR{\bx}\wC{\bx}^{-1}w_1w_2(w_1w_2)^{-1}\wC{\bx}\wL{\bx}|^{cr}_i\\
        &\leq&  |\wR{\bx}\wC{\bx}^{-1}w_1w_2\wC{\bx}\wL{\bx}|^{cr}_i+|w_1w_2|^{cr}_i+1\\
        &\leq&  |\wR{x}\wL{x}|^{cr}_i+|w_1|^{cr}_i+|w_2|^{cr}_i+2\\
        &=&     |\wR{x}\wL{x}|^{cr}_i+2=|x|_i+2.
    \end{eqnarray*}

Note that the last case covers all situations when $\wC{\bx}$ is trivial.
\end{enumerate}
\end{proof}

As an immediate corollary we obtain the following statement.

\begin{corollary}
\label{cor:bxby}
For any bases $\bx$ and $\by$ sharing a common element containing $a_i$,
    $$\bigl|\ |\bx|_i - |\by|_i\bigr| \leq 4.$$
\end{corollary}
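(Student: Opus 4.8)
The plan is to deduce Corollary~\ref{cor:bxby} directly from Lemma~\ref{lem:Xtox} by a short transitivity argument through the shared basis element. Let $x$ be the common element of $\bx$ and $\by$ containing $a_i$. Since $x \in \bx$ and $x$ contains $a_i$, Lemma~\ref{lem:Xtox} applies to give $|\bx|_i - 2 \leq |x|_i \leq |\bx|_i + 2$, i.e. $\bigl|\,|\bx|_i - |x|_i\,\bigr| \leq 2$. Since $x \in \by$ as well and still contains $a_i$, the same lemma gives $\bigl|\,|\by|_i - |x|_i\,\bigr| \leq 2$.

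Combining these two estimates via the triangle inequality for absolute values yields
\[
\bigl|\,|\bx|_i - |\by|_i\,\bigr| \leq \bigl|\,|\bx|_i - |x|_i\,\bigr| + \bigl|\,|x|_i - |\by|_i\,\bigr| \leq 2 + 2 = 4,
\]
which is exactly the claimed bound. The only subtlety to check is that the hypothesis of Lemma~\ref{lem:Xtox} — namely that the relevant element is a member of the basis \emph{and} contains an occurrence of $a_i$ — holds in both applications, which it does by the assumption that $x$ is a common element of both bases containing $a_i$. Note that $|x|_i$ here means the full $i$-length of the singleton set $\{x\}$, consistent with the notational convention stated earlier that a singleton is identified with its element.

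There is essentially no obstacle here: the corollary is a formal consequence of the lemma, and the proof is one line. If one wanted to be slightly more careful one could remark that the case $\bx = \by$ is trivial and that when $\bx \neq \by$ the element $x$ genuinely witnesses both inequalities; but this is not needed. I would simply write: ``This follows immediately from Lemma~\ref{lem:Xtox}, applied twice: $\bigl|\,|\bx|_i - |\by|_i\,\bigr| \leq \bigl|\,|\bx|_i - |x|_i\,\bigr| + \bigl|\,|x|_i - |\by|_i\,\bigr| \leq 4$, where $x$ is a common element of $\bx$ and $\by$ containing $a_i$.''
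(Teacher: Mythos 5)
Your proof is correct and matches the paper's argument exactly: apply Lemma~\ref{lem:Xtox} to the common element $x$ with respect to each basis, then combine the two bounds of $2$ via the triangle inequality. Nothing to add.
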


\begin{proof}
Let $x$ be a common element of $\bx$ and $\by$ containing $a_i$.
Then by Lemma~\ref{lem:Xtox} we get
    $$\bigl|\ |x|_i - |\bx|_i\bigr| \leq 2,$$
    $$\bigl|\ |x|_i - |\by|_i\bigr| \leq 2.$$
Combining the above inequalities proves the lemma.
\end{proof}

The following corollary is not used in the rest of the paper, but is
an interesting observation on its own.

\begin{corollary}
For any basis $\bx$ of $\F$ there is at most one
$i\in\{1,2,\ldots,n\}$ such that $|\bx|_i>0$.
\end{corollary}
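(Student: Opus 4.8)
The plan is to translate the hypothesis $|\bx|_i>0$ into a constraint on the augmented Whitehead graph of $\bx$ and then invoke Stallings' Theorem~\ref{thm:cutvertex}. Since $|\bx|_i$ and the words $\wL{\bx},\wR{\bx}$ occurring in the definition of full $i$-length depend only on the multiset of conjugacy classes of elements of $\bx$, I will work throughout with the cyclic reduction $\tilde\bx$ (a separable set, possibly not a basis), writing $w_L^{(i)},w_R^{(i)}$ for $\wL{\bx},\wR{\bx}$ computed with respect to the index $i$; the graph $\hat\Gamma_\ba(\tilde\bx)$ has no vertex loops since $\tilde\bx$ is cyclically reduced. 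Suppose toward a contradiction that $|\bx|_i>0$ and $|\bx|_j>0$ with $i\ne j$. By Corollary~\ref{cor:noai} (whose proof rests on Lemma~\ref{lem:kis0}), $|\bx|_i=|w_R^{(i)}w_L^{(i)}|_i^{cr}$, so $|w_R^{(i)}w_L^{(i)}|_i^{cr}>0$ and hence, by Lemma~\ref{lem:nocutvertex}, $\Gamma_{\ba-\{a_i\}}(w_R^{(i)}w_L^{(i)})$ has no cut vertex. Using the conventions of this section — a Whitehead graph over an alphabet carries that whole alphabet, with inverses, as its vertex set, and every vertex of a disconnected graph is a cut vertex — this forces $\Gamma_{\ba-\{a_i\}}(w_R^{(i)}w_L^{(i)})$ to be connected on all of $(\ba-\{a_i\})^{\pm1}$; in particular the word $w_R^{(i)}w_L^{(i)}$ contains every letter of $\ba-\{a_i\}$, and the same holds with $j$ in place of $i$.

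The heart of the argument is the following claim: \emph{if $|\bx|_i>0$, then every cut vertex of $\hat\Gamma_\ba(\tilde\bx)$ lies in $\{a_i,a_i^{-1}\}$.} Granting it, Theorem~\ref{thm:cutvertex} produces a cut vertex $v$ of $\hat\Gamma_\ba(\tilde\bx)$, and applying the claim to $i$ and to $j$ puts $v$ in $\{a_i,a_i^{-1}\}\cap\{a_j,a_j^{-1}\}=\emptyset$, the desired contradiction. To prove the claim, let $\Gamma^{(i)}$ be the subgraph of $\hat\Gamma_\ba(\tilde\bx)$ induced on $(\ba-\{a_i\})^{\pm1}$; its edges are precisely the cyclically consecutive pairs of letters of $\tilde\bx$ avoiding $a_i^{\pm1}$, i.e.\ the internal pairs of the $i$-chunks (together with the augmented graphs of those elements of $\tilde\bx$ containing no $a_i$). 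I would first show $\Gamma^{(i)}$ has no cut vertex. Since $w_L^{(i)}$ cyclically precedes, and $w_R^{(i)}$ cyclically follows, every occurrence of $a_i$, each of $w_L^{(i)}$ and $w_R^{(i)}$ is a subword of an element of $\tilde\bx$, so $\Gamma_{\ba-\{a_i\}}(w_L^{(i)})\cup\Gamma_{\ba-\{a_i\}}(w_R^{(i)})\subseteq\Gamma^{(i)}$. If one of $w_L^{(i)},w_R^{(i)}$ is trivial, then $\Gamma_{\ba-\{a_i\}}(w_R^{(i)}w_L^{(i)})$ is exactly one of these two graphs, hence a spanning subgraph of $\Gamma^{(i)}$ with no cut vertex, and then $\Gamma^{(i)}$ has none either (adjoining edges to a graph on vertex set $(\ba-\{a_i\})^{\pm1}$ with no cut vertex cannot create one). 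If instead both $w_L^{(i)},w_R^{(i)}$ are nontrivial, then $\tilde\bx$ contains no occurrence of $a_i^{\pm2}$ (a repeated $a_i$ would force $w_R^{(i)}$, a word over $\ba-\{a_i\}$, to be trivial), so every $i$-chunk has the form $(\text{a prefix in }\{w_R^{(i)},(w_L^{(i)})^{-1}\})(\text{a middle})(\text{a suffix in }\{w_L^{(i)},(w_R^{(i)})^{-1}\})$; here the only edge of $\Gamma_{\ba-\{a_i\}}(w_R^{(i)}w_L^{(i)})$ possibly missing from $\Gamma^{(i)}$ is the single \emph{junction} edge joining the last letter of $w_R^{(i)}$ to the inverse of the first letter of $w_L^{(i)}$, and one closes the gap by examining the $i$-chunks (a chunk lying between two like-signed occurrences of $a_i$ with trivial middle equals $w_R^{(i)}w_L^{(i)}$ or its inverse and supplies the junction edge directly; otherwise the Whitehead graphs of the nontrivial middles, which also lie in $\Gamma^{(i)}$, are glued to $\Gamma_{\ba-\{a_i\}}(w_R^{(i)})$ and $\Gamma_{\ba-\{a_i\}}(w_L^{(i)})$ in a way that reconnects the two endpoints of the junction edge without creating a cut vertex). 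Finally, a cut vertex $v\notin\{a_i,a_i^{-1}\}$ of $\hat\Gamma_\ba(\tilde\bx)$ would, upon intersecting both sides of its separation with $\Gamma^{(i)}$ (legitimate because $a_i,a_i^{-1}\ne v$), restrict to a separation of $\Gamma^{(i)}$ at $v$ or to a disconnection of $\Gamma^{(i)}$ — the only delicate case being when $v$'s side of the separation carries all edges of $\hat\Gamma_\ba(\tilde\bx)$ incident to $a_i^{\pm1}$, which is ruled out using that $w_R^{(i)}w_L^{(i)}$ spans $(\ba-\{a_i\})^{\pm1}$ — contradicting the previous step.

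I expect the bracketed junction-edge step to be the main obstacle: comparing the (no-cut-vertex) graph $\Gamma_{\ba-\{a_i\}}(w_R^{(i)}w_L^{(i)})$ with the induced subgraph $\Gamma^{(i)}$ when that single edge is absent. Plausible routes include a careful attachment analysis of the $i$-chunk middles; reformulating $|\bx|_i>0$ as the statement that the augmented Whitehead graph of the cyclic word associated to $w_R^{(i)}w_L^{(i)}$ over $\ba-\{a_i\}$ has no cut vertex; re-running the van Kampen / Stallings-folding computation of this section in the restricted alphabet $\ba-\{a_i\}$; or proving separately that for a basis $|\bx|_i>0$ already implies that one of $w_L^{(i)},w_R^{(i)}$ is trivial, which would eliminate the second case entirely. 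The overall strategy — pin the cut vertex of $\hat\Gamma_\ba(\tilde\bx)$ to $\{a_i,a_i^{-1}\}$, then play off $i$ against $j$ — is robust; this graph-theoretic comparison is where the technical weight lies.
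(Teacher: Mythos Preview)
Your central claim---that if $|\bx|_i>0$ then every cut vertex of $\hat\Gamma_\ba(\tilde\bx)$ lies in $\{a_i,a_i^{-1}\}$---is false. Take $n=3$ and $\bx=(a_1,a_2,a_3(a_1^2a_2^2a_1)^k)$ with $k$ large: here $w_R^{(3)}$ is trivial, $w_L^{(3)}=(a_1^2a_2^2a_1)^k$, and $|\bx|_3>0$ by Corollary~\ref{cor:positive}. In $\hat\Gamma_\ba(\tilde\bx)$ the vertex $a_3$ has unique neighbor $a_1^{-1}$ and $a_3^{-1}$ has unique neighbor $a_1$; the cut vertices are $a_1$ and $a_1^{-1}$, neither in $\{a_3^{\pm1}\}$. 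The mechanism is general: whenever $w_L^{(i)}$ is nontrivial with last letter $l$, every occurrence of $a_i$ is preceded by $l$ and every $a_i^{-1}$ is followed by $l^{-1}$, so $a_i^{-1}$ has $l$ as its \emph{only} neighbor in $\hat\Gamma_\ba(\tilde\bx)$, making $l\notin\{a_i^{\pm1}\}$ a cut vertex. Your attempt to dismiss the ``delicate case'' via the fact that $w_R^{(i)}w_L^{(i)}$ spans $(\ba-\{a_i\})^{\pm1}$ cannot succeed: spanning constrains edges \emph{inside} $\Gamma^{(i)}$ but says nothing about how many neighbors $a_i^{\pm1}$ have. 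The junction-edge difficulty you flagged is real but secondary; the strategy breaks at this later step regardless.

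The paper's proof avoids pinning the cut vertex. It uses essentially your intermediate fact---that the subgraph of $\hat\Gamma_\ba(\tilde\bx)$ on $(\ba-\{a_i\})^{\pm1}$ contains a spanning subgraph $\Gamma_i$ with no cut vertex (taken there as the Whitehead graph of a single $i$-chunk)---and observes that any cut vertex $v$ of $\hat\Gamma_\ba(\tilde\bx)$ must separate $\Gamma_i-\{v\}$ from at least one of $a_i,a_i^{-1}$. Hence some $a_i^{\pm1}$ has at most one neighbor among $(\ba-\{a_i\})^{\pm1}$. A short case analysis on that neighbor (possibly together with an $a_i$--$a_i^{-1}$ edge) then shows \emph{directly} that for every $j\ne i$ the vertices $a_i^{\pm1}$ are either isolated or attached through a single vertex in the Whitehead graph over $\ba-\{a_j\}$, forcing a cut vertex there and hence $|\bx|_j=0$. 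So the right repair is not to strengthen your claim but to weaken it: deduce only a degree bound on $a_i^{\pm1}$, and then argue about $j$ rather than about the location of $v$.
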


\begin{proof}
Suppose that $|\bx|_i>0$ for some $i$. Let $\tilde\bx$ be the
separable set obtained from $\bx$ by cyclically reducing all of its
elements. Consider the augmented Whitehead graph $\hat\Gamma$ of
$\tilde\bx$. By construction, $\hat\Gamma$ includes the Whitehead
graphs of all $i$-chunks of all elements of $\tilde\bx$ as
subgraphs. Since by definition of full $i$-length,
$|\tilde\bx|_i=|\bx|_i>0$ we must have at least one $i$-chunk $w$ of
$\tilde\bx$ with $|w|_i^{cr}>0$, which implies by
Lemma~\ref{lem:nocutvertex} that corresponding Whitehead graph
$\Gamma_i=\Gamma_{\ba-a_i}(w)$ does not have a cut vertex. But
according to Theorem~\ref{thm:cutvertex} graph $\hat\Gamma$ has a
cut vertex.

As $\Gamma_i$ is a subgraph of $\hat\Gamma$, it must be that
removing any cut vertex $v$ of $\hat\Gamma$ creates at least two
connected components, with $\Gamma_i - \{v\}$ in one component and
at least one of $a_i$ or $a_i^{-1}$ in a different component.
Without loss of generality, say $a_i$ appears in a different
component.  Then $a_i$ can have at most one neighbor in $\Gamma_i
\subset \hat\Gamma$.  If $a_i$ has no such neighbor, then
either $\{a_i\}$ or $\{a_i,a_i^{-1}\}$ will be a connected component in $\hat\Gamma$.
Therefore either $\{a_i\}$ or $\{a_i,a_i^{-1}\}$ will be a
nontrivial proper connected component in  $\Gamma_j$ for any $j\neq
i$. But this means that any vertex in $\Gamma_j$, except possibly
$a_i$ or $a_i^{-1}$, will be a cut vertex.  By Lemma~\ref{lem:nocutvertex} we have $|\bx|_j=0$.

If $a_i$ has a neighbor in $\Gamma_i \subset \hat\Gamma$ -- without
loss of generality, say $a_k$ for some $k \neq i$ -- and $a_i$ is
not adjacent to $a_i^{-1}$, then $a_i$ is isolated in $\Gamma_k$,
and $a_k$ is a cut vertex in $\Gamma_j$ for $j \neq i, k$.  In this case, $|\bx|_j = 0$ for $j \neq i$.

If $a_i$ has a neighbor in $\Gamma_i \subset\hat \Gamma$ -- again, say $a_k$ for $k \neq i$ -- and $a_i$ is adjacent to $a_i^{-1}$, then
$a_i^{-1}$ may have no other neighbors in $\Gamma_i \subset
\hat\Gamma$ except $a_k$ because otherwise $\hat\Gamma$ would not
have a cut vertex. Here again, either $\{a_i\}$ or
$\{a_i,a_i^{-1}\}$ will be a nontrivial proper connected component
in  $\Gamma_k$, so $|\bx|_k=0$. Finally, for every $j\neq i,k$ the
vertex $a_k$ will be a cut vertex in $\Gamma_j$ yielding
$|\bx|_j=0$.
\end{proof}

%%%%%%%%%%%%%%%%%%%%%%%%%%%%%%%%%%%%%%%%%%%%%%%%%%
\section{The Geometry of $\ES$}\label{sec:geometry}

%%%%%%%%%%%%%%%%%%%%%%%%%
\subsection{Distance and $i$-Length}~\\
We are now ready to estimate distances in $\ES$, based on how much $i$-length can change in a single proper nonempty index set.

\begin{lemma}
\label{lem:BasisLength} For any proper nonempty subset $S$ of the
index set $\{1, \dots, n\}$, any basis $\bx = \{x_1, \dots, x_n\}$
of $\F$, and any $S$-transformation $\phi \in \Aut$ which is the
identity on $\bx_{\ol S}$, we have that
    $$|\bx|_i - 12 \leq |\bx\phi|_i \leq |\bx|_i + 12.$$
\end{lemma}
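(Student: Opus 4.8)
The plan is to reduce the statement about bases to a statement about a single basis element, using Corollary~\ref{cor:bxby}, and then to control how an $S$-transformation $\phi$ changes the conjugate reduced $i$-length of that element via the multiplicative estimates of Lemmas~\ref{lem:subwordmult} and~\ref{lem:uwv}. Since everything is symmetric in $\bx$ and $\bx\phi$ (the inverse of an $S$-transformation fixing $\bx_{\ol S}$ is again such a transformation), it suffices to prove one of the two inequalities, say $|\bx\phi|_i \leq |\bx|_i + 12$.

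First I would deal with the case $i \in \ol S$. Then $a_i$ occurs only in the elements $x_j$ with $j \in \ol S$ (after we arrange, via the identification that vertices are defined up to permutation, that the index set $S$ really does select a subset of the tuple), and $\phi$ is the identity on those elements; so in fact $a_i$-containing elements are untouched and one gets $|\bx\phi|_i = |\bx|_i$ by Corollary~\ref{cor:bxby} (the basis $\bx\phi$ shares with $\bx$ every element containing $a_i$, hence $||\bx|_i - |\bx\phi|_i| \le 4$ — even better, they share \emph{all} such elements, but $4$ already suffices). So the interesting case is $i \in S$. Here I would pick an element $x_k \in \bx$ with $k \in S$ that contains $a_i$ (one exists since some element of $\bx$ contains $a_i$ and $i\in S$). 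By Lemma~\ref{lem:Xtox}, $\bigl| |\bx|_i - |x_k|_i \bigr| \le 2$ and $\bigl| |\bx\phi|_i - |(\bx\phi)_?|_i\bigr| \le 2$ for an appropriate $a_i$-containing element of $\bx\phi$. The crux is then to bound $|x'|_i$ in terms of $|x_k|_i$, where $x' := \phi(x_k)$ (or a suitable $a_i$-containing element of the image basis), losing at most $12 - 2 - 2 = 8$.

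The heart of the argument, and the main obstacle, is the following: an $S$-transformation $\phi$ fixing $\bx_{\ol S}$ is by Nielsen's theorem (applied inside the free factor $\langle \bx_S\rangle$) a product of elementary transformations among the $x_j$, $j \in S$, so $\phi(x_k)$ is, as a word in $\bx$, a short product of the $x_j$'s and their inverses — but it may be a \emph{long} product, so one cannot simply apply Lemma~\ref{lem:subwordmult} a bounded number of times. Instead I would use that each $x_j$, $j\in S\subseteq\{1,\dots,n\}$, has $|x_j|_i = 0$ unless it is ``the'' $a_i$-containing element, by Lemma~\ref{lem:Xtox} applied to the common element with the standard basis — more precisely, \emph{at most one} element of the basis $\bx$ contains $a_i$ after suitable conjugation, since $a_i$-chunks of $\alpha_\bx x$ have zero conjugate reduced $i$-length (Lemma~\ref{lem:kis0}), so the only contribution to $|\bx|_i$ comes from $|\wR\bx \wL\bx|_i^{cr}$ (Corollary~\ref{cor:noai}), and similarly the only element of $\bx$ whose \emph{own} $i$-length is positive is this distinguished one. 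Writing $\phi(x_k)$ as an alternating word in (conjugates of) elements of zero $i$-length and at most a bounded number of occurrences of the distinguished $a_i$-containing generator, repeated application of Lemma~\ref{lem:uwv} — inserting the zero-$i$-length blocks between the bounded number of ``heavy'' blocks — shows $|\phi(x_k)|_i^{cr}$ differs from $|x_k|_i^{cr}$ by a bounded additive constant (each insertion of a word $w$ costs $|w|_i^{cr} = 0$ plus $1$, and only boundedly many insertions are needed to pass between the two alternating words once the heavy occurrences are aligned). Bookkeeping the constants — the passage $\bx \leadsto x_k$ costs $2$, $x_k \leadsto \phi(x_k)$ costs some explicit constant from the chain of Lemma~\ref{lem:uwv} applications, and $\phi(x_k) \leadsto \bx\phi$ costs $2$ — must come out to at most $12$; verifying the constant is exactly where the careful (but routine) calculation lies, and where I would be most worried about an off-by-a-few error, which is presumably why the bound is stated with the comfortable constant $12$ rather than something sharp.
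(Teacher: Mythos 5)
Your proposal diverges from the paper's proof at the crucial step, and the gap you yourself flag as "the main obstacle" is in fact fatal to the approach as you have sketched it.

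The paper's proof never tries to control $\phi(x_k)$ as a word in the $x_j$'s. Instead it builds a single intermediate basis: in the hard case (all $a_i$-containing elements lie in $\bx_S$), pick any $x \in \bx_{\ol S}$ and $y \in \bx_S$ containing $a_i$, and form $\bx' := (\bx - \{x\}) \cup \{xy\}$. Then $\bx$ and $\bx'$ share $y$, $\bx'$ and $\bx'\phi$ share $xy$ (because $\phi$ acts only on the $S$-indexed positions and the tuple $\bx'$ agrees with $\bx$ there), and $\bx'\phi$ and $\bx\phi$ share the $a_i$-containing element of $(\bx\phi)_S$. Three applications of Corollary~\ref{cor:bxby} give $4+4+4=12$. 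The easy case is exactly one application of the same corollary.

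Your plan instead wants to pass to a single element $x_k$, write $\phi(x_k)$ as a product of the $x_j$'s with $j \in S$, and insert the "zero-$i$-length" blocks via Lemma~\ref{lem:uwv}. This has two problems. First, Lemma~\ref{lem:uwv} charges an additive $+1$ per insertion even when the inserted word has $i$-length $0$, and the number of insertions is governed by the word length of $\phi(x_k)$ over $\bx_S$, which is unbounded over all $S$-transformations $\phi$; so the error accumulates and is not a constant. You assert "only boundedly many insertions are needed to pass between the two alternating words once the heavy occurrences are aligned," but no mechanism is given for this alignment, and none is apparent. Second, the supporting claim that (after conjugation) only one element of $\bx$ contains $a_i$ is false: $(a_1, a_1 a_2, a_3, \dots, a_n)$ is a basis in which two elements contain $a_1$, and no conjugation fixes that. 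Relatedly, your dichotomy "$i \in S$ vs.\ $i \in \ol S$" confuses two unrelated index sets: $i$ indexes a letter of the reference basis $\ba$, while $S$ partitions the positions of $\bx$; whether $a_i$ appears in $\bx_{\ol S}$ has nothing to do with whether $i \in \ol S$. The paper's case split is on whether \emph{some element of $\bx_{\ol S}$ contains the letter $a_i$}, not on membership of the index $i$. Replacing your middle step with the paper's bridge-basis construction would repair the argument; as written, the proposal does not establish the bound.
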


\begin{proof}
If there exists some $x \in \bx_{\ol S}$ such that $x$ contains an
occurrence of $a_i^{\pm 1}$, then since $x \in \bx \cap \bx\phi $,
by Corollary~\ref{cor:bxby}
    $$\bigl|\ |\bx|_i - |\bx\phi|_i\bigr| \leq 4,$$
and the lemma follows.

If no such $x$ exists, choose any $x \in \bx_{\ol S}$ and let $y \in
\bx_{S}$ be an element of $\bx$ which contains an occurrence of
$a_i^{\pm 1}$. Let $\bx' := (\bx -\{x\})\cup \{xy\}$.

The bases $\bx$ and $\bx'$ share $y$ in common, so by
Corollary~\ref{cor:bxby}
\begin{equation}
\label{eqn:bx_bxp}
\bigl|\ |\bx|_i - |\bx'|_i\bigr| \leq 4,
\end{equation}

Since $\bx'$ and $\bx'\phi$ share $xy$ in common, we get
\begin{equation}
\label{eqn:bxp_phi_bxp} \bigl|\ |\bx'|_i - |\bx'\phi|_i\bigr| \leq
4,
\end{equation}

Also there must be an element among $z\in (\bx\phi)_S$ containing an
occurrence of $a_i$ (otherwise $\bx\phi$ would not contain an
occurrence of $a_i$). This element $z$ is common for bases $\bx\phi$
and $\bx'\phi$ yielding
\begin{equation}
\label{eqn:phi_bx_phi_bxp} \bigl|\ |\bx'\phi|_i - |\bx\phi|_i\bigr|
\leq 4,
\end{equation}

Finally, combining the inequalities~\eqref{eqn:bx_bxp}, \eqref{eqn:bxp_phi_bxp} and~\eqref{eqn:phi_bx_phi_bxp}, we obtain the statement of the lemma.

\end{proof}

It seems that, with more careful bookkeeping, the constant 12 might be able to be improved.

\begin{corollary}\label{cor:qi}
Let $\bx$ be a basis of $\F$.  Then the number of index changes
required in a transformation from $\Aut$ taking $\ba$ to $\bx$ is
bounded below by $\frac{1}{24}|\bx|_i - 1$.
\end{corollary}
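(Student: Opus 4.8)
The plan is to combine Corollary~\ref{cor:bxby} (in the form of Lemma~\ref{lem:BasisLength}) with the characterization of geodesics in $\ES$ from Theorem~\ref{thm:geodesics}, tracking the total change in $i$-length along a path. Suppose $\psi$ is a transformation of $\Aut$ taking $\ba$ to $\bx$, and write $\psi = \phi_0\phi_1\cdots\phi_k$ as a product of $\mathcal S$-transformations with exactly $k$ index changes, where $k$ is the minimal number of index changes required (so $\psi$ is realized by an edge path of length roughly $k$ in $\ES$, up to an additive constant, by Theorem~\ref{thm:geodesics}). Setting $\bx(j) := \ba\,\phi_0\cdots\phi_{j-1}$ for $0\le j\le k+1$, so $\bx(0)=\ba$ and $\bx(k+1)=\bx$, we have that each $\phi_j$ is an $S_j$-transformation for some proper nonempty index set $S_j$.

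The key step is the estimate $\bigl|\,|\bx(j)|_i - |\bx(j+1)|_i\,\bigr|\le 12$ for each $j$. This is exactly Lemma~\ref{lem:BasisLength} once we arrange that $\phi_j$ is the identity on the $\ol{S_j}$-part of the basis $\bx(j)$; since an $S_j$-transformation by definition fixes both free factors setwise, we may postcompose with an appropriate $\mathcal S$-transformation (absorbed into the conjugation ambiguity, which does not affect $i$-length) to assume this without changing the count of index changes. Then summing the per-step bounds along the path gives $\bigl|\,|\bx|_i - |\ba|_i\,\bigr| \le 12(k+1)$. Since $|\ba|_i = 0$ (the standard basis has trivial $w_R$, $w_L$, and every $i$-chunk is empty, so $k(\ba)=0$ and $|w_R(\ba)w_L(\ba)|_i^{cr}=0$), this yields $|\bx|_i \le 12(k+1)$, i.e. $k \ge \frac{1}{12}|\bx|_i - 1$.

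To get the stated constant $\frac{1}{24}$ rather than $\frac{1}{12}$, I would account for the discrepancy between the number of index changes required by $\psi$ and the number of intermediate bases along the corresponding geodesic: by Theorem~\ref{thm:geodesics} a geodesic of length $\ell$ requires between $\ell-4$ and $\ell$ index changes, and the "up to distance $2$ at each vertex" slack in the ladder construction of Section~\ref{sec:spaces} means the actual sequence of bases whose consecutive $i$-lengths we compare may be a refinement of the $\phi_j$ sequence, roughly doubling the number of steps. Each such refined step is still governed by Lemma~\ref{lem:BasisLength} (or by Corollary~\ref{cor:bxby} directly, since consecutive bases in the ladder share a common element), so the total becomes $|\bx|_i \le 24(k+1)$, giving $k \ge \frac{1}{24}|\bx|_i - 1$.

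The main obstacle is the bookkeeping in this last paragraph: one must be careful that every consecutive pair of bases appearing in the (possibly refined) sequence really does either share a common element containing $a_i$ or differ by an $\mathcal S$-transformation fixing the $\ol S$-part, so that one of Corollary~\ref{cor:bxby} or Lemma~\ref{lem:BasisLength} applies with a bound of $12$ per step — and that the number of such steps is at most $2(k+1)$. The normalization $|\ba|_i=0$ and the reduction to the "identity on $\bx_{\ol S}$" hypothesis are routine, but getting the constant exactly right requires matching the slack constants from Theorem~\ref{thm:geodesics} with the ladder picture; a cruder argument still gives a linear lower bound, which is all that Theorem~\ref{thm:quasiflat} ultimately needs.
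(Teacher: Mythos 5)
Your overall framework --- bound the change in $i$-length per index change using Lemma~\ref{lem:BasisLength}, use $|\ba|_i=0$, and sum along a word of $\mathcal S$-transformations realizing the minimal number of index changes --- is the right one, and it is essentially what the paper does. However, there is a genuine gap in the step where you reduce to transformations that are the identity on $\bx_{\ol S}$, and this propagates into a misidentification of where the factor of $2$ (i.e.\ the constant $24$ rather than $12$) comes from.

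The flaw is in the sentence claiming that ``we may postcompose with an appropriate $\mathcal S$-transformation (absorbed into the conjugation ambiguity, which does not affect $i$-length) to assume this.'' An $S$-transformation fixes each free factor $\la\bx_S\ra$ and $\la\bx_{\ol S}\ra$ only \emph{setwise}; it is free to replace $\bx_{\ol S}$ by an entirely different basis of $\la\bx_{\ol S}\ra$. That replacement is \emph{not} a conjugation: conjugation means conjugating the whole tuple by a single element of $\F$, whereas re-choosing a basis inside a free factor is applying an arbitrary automorphism of that factor. Since full $i$-length depends on the actual basis (not merely on the free factorization it spans), the ``postcompose and absorb'' step changes $|\cdot|_i$ by an amount that you would need to control, and Lemma~\ref{lem:BasisLength} as stated does not give you that control without a second application. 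Concretely, the paper decomposes each $\phi_j = \phi_j^{(1)}\phi_j^{(2)}$ with $\phi_j^{(1)}$ the identity on $\bx_{S_j}$ (so an $\ol{S_j}$-instance of Lemma~\ref{lem:BasisLength} applies, costing $12$) and $\phi_j^{(2)}$ the identity on the $\ol{S_j}$-part (another $12$), for a total of $24$ per $S$-transformation. Your second paragraph instead tries to extract the factor of $2$ from the ``up to distance $2$'' slack and the $\ell-4$ vs.\ $\ell$ error in Theorem~\ref{thm:geodesics}; but that slack only affects the \emph{additive} constant (the ``$-1$''), not the multiplicative constant, and does not cause a refinement of the sequence of bases $\bx(j)$ that you are comparing. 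If you replace the ``absorb into conjugation'' step with the two-piece decomposition of each $S$-transformation, the argument closes and the constant $\frac{1}{24}$ comes out for the right reason.
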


\begin{proof}
For any index set $S$, an $S$-transformation can be written as a
product of an $S$-transformation which is identity on $\bx_S$ and an
$S$-transformation which is identity on $\bx_{\ol S}$. Applying
Lemma \ref{lem:BasisLength} twice, we see an $S$-transformation can
change $i$-length by at most 24.  The corollary then follows from
Theorem \ref{thm:geodesics}. Note that the requirement about the
compatibility of the neighboring index sets cannot decrease the number
of subwords in the optimal decomposition realizing the minimal number
of index changes.
\end{proof}

This corollary, combined with Theorem \ref{thm:geodesics}, shows our main computational theorem:

\begin{theorem}\label{thm:lowerbound}
Let $\bx$ be a basis of $\F$, expressed in terms of a fixed standard basis $\ba$.  For any index $i$ and any index sets $S_a$ and $S_x$,
    $$d_{\ES}((\ba, S_a), (\bx, S_x)) \geq \frac{|\bx|_i}{24}-1.$$
\end{theorem}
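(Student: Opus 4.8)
The statement $d_{\ES}((\ba, S_a), (\bx, S_x)) \geq \frac{|\bx|_i}{24}-1$ follows almost immediately from the two results just proved, so the ``proof'' is really a matter of correctly assembling the pieces; the analytic work has all been done in Corollary~\ref{cor:qi} and Theorem~\ref{thm:geodesics}. The plan is as follows. First I would reduce to the case of vertices with a prescribed index set. A geodesic in $\ES$ from $(\ba, S_a)$ to $(\bx, S_x)$ has some length $k = d_{\ES}((\ba,S_a),(\bx,S_x))$. By Theorem~\ref{thm:geodesics}, such a geodesic is represented (up to distance $2$ at each vertex) by a product $\phi = \phi_0\phi_1\cdots\phi_k$ of $\mathcal{S}$-transformations with neighboring factors being $\mathcal{S}$-transformations with respect to compatible index sets, and this product requires between $k-4$ and $k$ index changes; in particular the number of index changes is at most $k$.

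The second step is to observe that, since a geodesic realizing $d_{\ES}$ moves from a vertex associated to $\ba$ to one associated to $\bx$, the underlying transformation $\phi$ (the product of the $\mathcal{S}$-transformations along the geodesic) is an element of $\Aut$ taking $\ba$ to $\bx$, possibly after an inconsequential adjustment coming from the ``up to distance $2$ at each vertex'' slack and from the initial and terminal index sets $S_a$, $S_x$ (which only affects the count by a bounded additive constant already absorbed by Theorem~\ref{thm:geodesics}). Corollary~\ref{cor:qi} then says precisely that any such transformation taking $\ba$ to $\bx$ requires at least $\frac{1}{24}|\bx|_i - 1$ index changes. Combining this lower bound on the number of index changes with the upper bound from Theorem~\ref{thm:geodesics} that a geodesic of length $k$ requires at most $k$ index changes, we conclude $k \geq \frac{1}{24}|\bx|_i - 1$, which is the claim.

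The one point that needs care — and the only place I would slow down — is making sure the ``$S_a$ and $S_x$ arbitrary'' part of the statement is genuinely covered. In Corollary~\ref{cor:qi} the count of index changes is in the transformation, not the edge path, so it is insensitive to the choice of terminal index sets; and Theorem~\ref{thm:geodesics} already phrases everything in terms of index changes of the underlying product of $\mathcal{S}$-transformations, with the compatibility constraint on neighboring index sets only possibly \emph{increasing} the length of the edge path (as noted in the proof of Corollary~\ref{cor:qi}, requiring compatibility cannot decrease the number of subwords in the optimal decomposition). So for any $S_a$, $S_x$, any geodesic edge path realizes $\phi\colon\ba\mapsto\bx$ with at most $k$ index changes, while $\phi$ needs at least $\frac{1}{24}|\bx|_i - 1$ of them; hence $k \geq \frac{1}{24}|\bx|_i - 1$. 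This completes the proof. Thus the main obstacle is not analytic at all but bookkeeping: verifying that the reduction of a geodesic path to its underlying transformation loses nothing relevant to the index-change count, regardless of the boundary index sets.
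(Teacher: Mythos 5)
Your proposal is correct and follows exactly the route the paper intends: the paper itself gives no separate argument for Theorem~\ref{thm:lowerbound} but simply states that it follows from Corollary~\ref{cor:qi} combined with Theorem~\ref{thm:geodesics}, which is precisely the assembly you carry out (geodesic length bounds the number of index changes from above, Corollary~\ref{cor:qi} bounds the number of index changes from below). Your closing remark about the index sets $S_a$, $S_x$ being immaterial is the right observation and matches the paper's framing, since the index-change count is a property of the underlying product of $\mathcal{S}$-transformations rather than of the particular endpoint index sets.
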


%%%%%%%%%%%%%%%%%%%%%%%%%
\subsection{$\ES$ is Not Hyperbolic}~\\
Corollary \ref{cor:qi} is useful for estimating distances in
$\ES$.  For instance, we may now apply this corollary to show
that $\ES$ is not hyperbolic in the sense of Gromov, by
identifying quasiflats -- that is, a quasiisometric embedding $\R^k
\to \ES$ for $k > 1$.

Let $p_t := a_1^{t+1}a_2^{t+1}\cdots
a_{n-1}^{t+1}a_1^{t+1}a_2^{t+1}a_1^{t+1}$. Note that for $t\geq 1$
the augmented Whitehead graph of $p_t$ looks similar to the graph
shown in Figure~\ref{fig:whitehead}, and removing vertices
corresponding to $a_n$ and $a_n^{-1}$ will produce graphs without
cut vertices. We propose to map the integer lattice $\Z^m$
quasiisometrically into $\ES$ by the map $\psi$ which takes
$(k_1,k_2,\ldots,k_m) \in \Z^m$ to the vertex
$\psi(k_1,k_2,\ldots,k_m)=(\bx, S)$ of $\ES$, where $S$ is
an arbitrary proper nontrivial subset of $\{1,2,\ldots,n\}$ and
$\bx$ is obtained from the standard basis $\ba$ by replacing
$a_n$ by $a_np_1^{k_1}p_2^{k_2}\cdots p_m^{k_m}$.

\begin{theorem}\label{thm:quasiflat}
The map $\psi$ yields an $m$-dimensional quasiflat in $\ES$.
\end{theorem}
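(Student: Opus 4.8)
We must show that $\psi$ is a quasiisometric embedding of $(\Z^m,\|\cdot\|_1)$ into $\ES$, i.e.\ prove an upper (Lipschitz) bound and a matching lower bound on $d_\ES(\psi(\mathbf{k}),\psi(\mathbf{k}'))$ valid for \emph{every} pair $\mathbf{k},\mathbf{k}'\in\Z^m$; since $\Z^m$ is cocompact in $\R^m$, this also yields the quasiisometrically embedded $\R^m$ of the introduction's version of Theorem~\ref{thm:quasiflat}. Write $H=\la a_1,\dots,a_{n-1}\ra$ and $P_{\mathbf{k}}=p_1^{k_1}\cdots p_m^{k_m}\in H$, so that $\psi(\mathbf{k})$ is the vertex $[H*\la a_nP_{\mathbf{k}}\ra]$ with index set $\{n\}$; the whole argument is insensitive to the choice of index set, since Theorem~\ref{thm:lowerbound} is uniform over index sets and an index-set change costs at most two edges. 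For the upper bound it suffices to bound $d_\ES(\psi(\mathbf{k}),\psi(\mathbf{k}\pm e_t))$ by a constant (where $e_t$ changes only the $t$-th coordinate, by $\pm1$), for then moving one coordinate at a time gives $d_\ES(\psi(\mathbf{k}),\psi(\mathbf{k}'))\le (\text{const})\|\mathbf{k}-\mathbf{k}'\|_1$. Conjugating the factorization $[H*\la a_nP_{\mathbf{k}}\ra]$ by $\beta:=p_{t+1}^{k_{t+1}}\cdots p_m^{k_m}\in H$ (which fixes $H$, hence does not change the vertex) produces the representative $[H*\la \beta a_np_1^{k_1}\cdots p_t^{k_t}\ra]$, and $\psi(\mathbf{k}\pm e_t)$ is obtained from it, after conjugating back by $\beta^{-1}$, by appending or deleting one factor $p_t^{\pm1}$ on the right of the $a_n$-factor. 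As $p_t^{\pm1}$ is a product of $n+2$ syllables $a_i^{\pm(t+1)}$, each a power of a primitive of $H$, and — exactly as in the motivating computation at the start of Section~\ref{sec:ilength} — right-multiplying the $a_n$-factor $c$ by a power of a primitive $\sigma$ of $H$ moves distance at most $2$ in $\ES$ (both the old and new vertex being adjacent to $[\,\la\text{basis of }H\text{ omitting }\sigma\ra*\la \sigma,c\ra\,]$), we get $d_\ES(\psi(\mathbf{k}),\psi(\mathbf{k}\pm e_t))\le 2(n+2)$, so $\psi$ is $2(n+2)$-Lipschitz.

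For the lower bound we use $\Out$-equivariance. Let $\Psi_{\mathbf{k}}\in\Aut$ fix $a_1,\dots,a_{n-1}$ and send $a_n\mapsto a_nP_{\mathbf{k}}$, so that $\psi(\mathbf{k})=\Psi_{\mathbf{k}}\cdot(\ba,\{n\})$. Since $\Out$ acts on $\ES$ by isometries,
\[
d_\ES(\psi(\mathbf{k}),\psi(\mathbf{k}'))=d_\ES\bigl((\ba,\{n\}),\,\Psi_{\mathbf{k}}^{-1}\Psi_{\mathbf{k}'}\cdot(\ba,\{n\})\bigr),
\]
and $\Psi_{\mathbf{k}}^{-1}\Psi_{\mathbf{k}'}$ is the automorphism fixing $a_1,\dots,a_{n-1}$ with $a_n\mapsto a_nQ$, where $Q$ is the freely reduced form of $P_{\mathbf{k}}^{-1}P_{\mathbf{k}'}$ (using that $P_{\mathbf{k}'}\in H$ is fixed by $\Psi_{\mathbf{k}}^{-1}$). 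Setting $\bx'':=(a_1,\dots,a_{n-1},a_nQ)$, the word $a_nQ$ is cyclically reduced because $Q\in H$, so $\wR{\bx''}=1$ and $\wL{\bx''}=Q$; hence Corollary~\ref{cor:noai} gives $|\bx''|_n=|Q|_n^{cr}$, and Theorem~\ref{thm:lowerbound} yields $d_\ES(\psi(\mathbf{k}),\psi(\mathbf{k}'))\ge\frac1{24}|Q|_n^{cr}-1$. Everything now reduces to the estimate $|Q|_n^{cr}\ge c_1\|\mathbf{k}-\mathbf{k}'\|_1-c_2$ for constants $c_1,c_2$ depending only on $n$ and $m$.

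To prove this, put $N=\|\mathbf{k}-\mathbf{k}'\|_1$ and let $r$ be the least index with $k_r\ne k_r'$. Telescoping the first $r-1$ coordinate pairs (which cancel completely), $Q$ is, up to $O(m)$ letters of cancellation at its $O(m)$ block junctions, the word $p_m^{-k_m}\cdots p_{r+1}^{-k_{r+1}}\,p_r^{k_r'-k_r}\,p_{r+1}^{k_{r+1}'}\cdots p_m^{k_m'}$; consequently $Q$ contains at least $N-O(m)$ pairwise disjoint subwords each equal to some $p_t^{\pm1}$ (``blocks''), and each such block has simple $n$-length at least $1$, since $\Gamma_{\ba-\{a_n\}}(p_t)$ and $\Gamma_{\ba-\{a_n\}}(p_t^{-1})$ have no cut vertex (the non-augmented graph differs from the augmented one, restricted to $\ba-\{a_n\}$, only in the multiplicity of the already-multiple edge $a_1$--$a_1^{-1}$, and the restricted augmented graph has no cut vertex by the remark preceding the theorem). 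We apply Lemma~\ref{lem:subtosimple}: given any nested family $\mathcal F$ of canceling pairs of $Q$ with $f:=|\mathcal F|$, we must bound $|Q-\mathcal F|_n^{simple}=f+\sum_{I\in Q-\mathcal F}|I|_n^{simple}\ge f+(\text{number of blocks lying inside some }I)$. The key point is that the two halves $u,u^{-1}$ of a canceling pair are mutually inverse words, so occurrences of $p_t$ in $u$ biject with occurrences of $p_t^{-1}$ in $u^{-1}$ (and vice versa); hence, up to a bounded boundary error, such a pair meets equally many $p_t$-blocks and $p_t^{-1}$-blocks, and summing over the $f$ pairs, $\mathcal F$ destroys $p_t$-blocks and $p_t^{-1}$-blocks in numbers differing by $O(f)$, for each $t$. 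On the other hand a run-length count shows that in $Q$ a full copy of $p_t^{\pm1}$ occurs only inside the explicit blocks $p_t^{-k_t}$, $p_t^{k_t'}$ (or $p_r^{k_r'-k_r}$ when $t=r$), with $O(m)$ exceptions near junctions; so the number of blocks $\mathcal F$ destroys for index $t$ is at most $2\min(\#p_t\text{-blocks},\,\#p_t^{-1}\text{-blocks})+O(f)$, which falls short of the total for index $t$ by the surplus $|\,\#p_t\text{-blocks}-\#p_t^{-1}\text{-blocks}\,|=|k_t-k_t'|-O(m)$. Thus $\mathcal F$ leaves at least $\sum_t|k_t-k_t'|-O(m)-O(mf)=N-O(m)-O(mf)$ blocks intact, so $|Q-\mathcal F|_n^{simple}\ge f+N-O(m)-O(mf)$. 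If $f$ is at least a suitable multiple of $N/m$ this already makes $|\mathcal F|/2$ a multiple of $N/m$; otherwise the term $O(mf)$ is at most $N/2$ and $|Q-\mathcal F|_n^{simple}\ge N/2-O(m)$. In either case Lemma~\ref{lem:subtosimple} gives $|Q|_n^{cr}\ge c_1N-c_2$, and combining with the reduction above and the Lipschitz bound shows $\psi$ is a quasiisometric embedding.

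The only substantive work is this last estimate for $|Q|_n^{cr}$, and the hard part of it is the combinatorial bookkeeping: carefully accounting for the $O(m)$ letters of cancellation at block junctions and the bounded number of partial blocks at the ends of the subwords $u,u^{-1}$, and verifying that no full copy of $p_t^{\pm1}$ occurs spuriously inside a power of a different $p_{t'}$ — which is precisely what the pairwise distinct syllable lengths $t+1$ in the definition of $p_t$ are designed to rule out. By contrast the Lipschitz bound, the $\Out$-equivariant reduction, and the identity $|\bx''|_n=|Q|_n^{cr}$ are routine consequences of Sections~\ref{sec:spaces} and~\ref{sec:ilength}.
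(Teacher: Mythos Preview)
Your proof is correct and follows the same overall strategy as the paper: both reduce via the $\Out$-action to estimating $|Q|_n^{cr}$ for $Q$ the free reduction of $P_{\mathbf k}^{-1}P_{\mathbf k'}$, invoke Theorem~\ref{thm:lowerbound} (with the identification $|\bx''|_n=|Q|_n^{cr}$ from Corollary~\ref{cor:noai}), and then bound $|Q|_n^{cr}$ from below using Lemma~\ref{lem:subtosimple} together with a block-counting argument that exploits the distinct syllable lengths $t+1$ of the words $p_t$. Your upper bound --- stepping one coordinate at a time and using the ``power-of-a-primitive costs $2$'' observation from the start of Section~\ref{sec:ilength} --- is in fact a bit cleaner than the paper's explicit path construction (which yields $2d+m$ rather than your $2(n+2)d$); both are perfectly adequate.

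The only substantive difference is in the block-counting for the lower bound. You track the signed surplus $|\#p_t-\#p_t^{-1}|$ separately for each $t$ and argue that a canceling pair destroys the two signs in nearly equal numbers, accumulating an $O(mf)$ error and hence $m$-dependent quasi-isometry constants. The paper streamlines this step: it first discards, for each $t$, all but $|k_t-k_t'|$ of the $p_t^{\pm1}$-leftovers (so that exactly $d$ blocks remain, all ``unbalanced''), and then observes that any single canceling pair of $\mathcal F$ can meet at most four of these remaining blocks --- because a full block contained in one half of the pair forces an inverse block in the other half, and those inverse blocks have already been thrown away. This gives the sharp bound $d-4|\mathcal F|$ on the number of surviving blocks with no $m$-dependence, and the explicit estimate $|Q|_n^{cr}\ge d/11-21/11$. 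Your argument is correct for the stated theorem (since the constants may depend on $m$), but the paper's observation replaces your $O(mf)$ by a clean $4f$ and removes the bookkeeping you flag as ``the hard part''.
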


\begin{proof}
To see that $\psi$ is indeed a quasiisometry, consider the images of
two points, $(k_1,k_2,\ldots,k_m)$ and $(l_1,l_2,\ldots,l_m)$ under
$\psi$.  In the domain, these points are of distance
\[d=\sum_{t=1}^m|k_t-l_t|\]
apart. In the codomain, the distance between
$\psi(k_1,k_2,\ldots,k_m)$ and $\psi(l_1,l_2,\ldots,l_m)$ is the
same as the distance between the basepoint $\ba$ and the point
represented by the standard basis with $a_n$ replaced by
$a_n\omega$, where
\[\omega = p_m^{-k_m}p_{m-1}^{-k_{m-1}}\cdots p_1^{-k_1}
p_1^{l_1}p_2^{l_2}\cdots p_m^{l_m}\] after free reduction.

By Theorem~\ref{thm:lowerbound} and the definition of full
$i$-length, the latter distance is bounded below by
    \begin{equation}\label{eqn:bxtoomega}
    \frac{1}{24}|\omega|_n^{cr}-1.
    \end{equation}

We claim $|\omega|_n^{cr} \geq \frac{d}{11}-\frac{21}{11}$, as
follows.  By Lemma~\ref{lem:subtosimple} $|\omega|_n^{cr}$ can be
estimated from below by
\begin{equation}
\label{eqn:omegaprime}
    |\omega|_n^{cr}\geq\min_{\mathcal F \in S} \left(\max\left\{
    \frac{|\mathcal F|}{2}-1,~~
    \frac{1}{5}|\omega-\mathcal F|_n^{simple}-3
     \right\} \right),
\end{equation}
where $S$ is the set consisting of all nested families of canceling
pairs in $\omega$. Let $\mathcal F$ denote the family of canceling
pairs in $\omega$ that minimizes the bound
in~\eqref{eqn:omegaprime}.

There may be free cancellations of two types in $\omega$. First,
several full occurrences of $p_t$ may cancel with full occurrences
of $p_t^{-1}$ in the middle where $p_1^{-k_1}$ and $p_1^{l_1}$ meet,
and second, there may be cancellation of two occurrences of $p_t$
for different $t$. In the second case, by the definition of $p_t$,
the only part that may cancel is a subword of either the last and/or
the first syllable of the form $a_1^{t+1}$.  However, reductions of
the second type preserve Whitehead graphs in the following sense:
the Whitehead graph of the uncanceled subword $q$ of every copy of
$p_t^{\pm1}$ (with vertices $a_n^{\pm1}$ removed) will still have a
cut vertex. We will call such a subword $q$ a \emph{leftover of type
$t$} and denote by $q^{(t)}$. Each $q^{(t)}$ contains
$a_2^{t+1}\dots a_{n-1}^{t+1}a_1^{t+1}a_2^{t+1}$ as a subword.

Consider canceling pairs in $\mathcal F$. Every occurrence of
$q^{(t)}$ disjoint from pairs in $\mathcal F$ will introduce 1 to
the sum in the definition of $|\omega-\mathcal F|^{simple}_n$.
Therefore we just need to count the number of such $q^{(t)}$'s
to get a lower bound for $|\omega-\mathcal F|^{simple}_n$.  A canceling pair in $\mathcal F$ may contain some $q^{(t)}$ in one factor and $(q^{(t)})^{-1}$ in the other factor.  We remove any such $q^{(t)}$ from our count by throwing out all but $|k_t-l_t|$ occurrences of any $q^{(t)}$ for each $t$.  This leaves exactly $d$ possible $q^{(t)}$'s to count.  For the remaining $q^{(t)}$'s, by the definition of $p_t$, a given canceling pair may involve at most 4 different occurrences of a $q^{(t)}$ (for possibly different values of $t$).  Thus, removing all $p_t^{\pm1}$ that cancel in the free
reduction of the first type in the previous paragraph, then all
leftovers that are contained in a canceling pair which also contains the inverse of the leftover, and finally
removing all leftovers that intersect a canceling pair at all will leave at
least $d-4|\mathcal F|$ occurrences of a $q^{(t)}$.

Hence,
    \[\frac{1}{5}|\omega-\mathcal F|^{simple}_n-3
    \geq\frac{1}{5}\bigl((d-4|\mathcal F|)+|\mathcal F|\bigr)-3
    =\frac{d}{5}-\frac{3}{5}|\mathcal F|-3.\]
From~\eqref{eqn:omegaprime} we obtain
\[
    |\omega|_n^{cr}\geq\max\left\{
    \frac{|\mathcal F|}{2}-1,~~
    \frac{d}{5}-\frac{3}{5}|\mathcal F|-3
     \right\},
\]
If $|\mathcal F|\geq \frac{2}{11}d-\frac{20}{11}$ then
    \[|\omega|^{cr}_n\geq \frac{|\mathcal F|}{2}-1\geq \frac{d}{11}-\frac{21}{11}.\]
But if $|\mathcal F|<\frac{2}{11}d-\frac{20}{11}$ then
    \[|\omega|^{cr}_n\geq \frac{d}{5}-\frac{3}{5}|\mathcal F|-3> \frac{d}{11}-\frac{21}{11}.\]

In either case, as claimed, $|\omega|_n^{cr} \geq
\frac{d}{11}-\frac{21}{11}$.

Combining this claim with the lower bound in \eqref{eqn:bxtoomega}, we have that the
distance between the vertices $\psi(k_1,k_2,\ldots,k_m)$ and
$\psi(l_1,l_2,\ldots,l_m)$ is bounded below by $\frac{d}{24\cdot
11}-\frac{21}{24\cdot 11}-1=\frac{1}{264}d-\frac{95}{88}$.

We claim the distance between $\psi(k_1,k_2,\ldots,k_m)$ and $\psi(l_1,l_2,\ldots,l_m)$ is also bounded above by $d+m$, as follows.  Without loss of generality, fix $S = \{n\}$.  Recall a vertex in $\ES$ is defined up to conjugation.  Thus, for any word $w$,
    $$[\la a_1, \dots, a_{n-1}\ra*\la a_n\ra] = [\la a_1^w, \dots, a_{n-1}^w \ra*\la a_n^w\ra].$$
Thus, the following describes a path from $\psi(k_1,k_2,\ldots,k_m)$ to $\psi(l_1,l_2,\ldots,l_m)$:
\begin{eqnarray*}
\psi(k_1, k_2, \dots, k_m)
  &=&   [\la a_1, \dots, a_{n-1} \ra * \la a_np_1^{k_1}p_2^{k_2}\cdots p_m^{k_m}\ra ]\\
  &=&   [\la a_1, \dots, a_{n-1} \ra * \la p_1^{-k_1}\dots p_m^{-k_m}a_n\ra ]\\
  &\to& [\la a_1, \dots, a_{n-1} \ra * \la p_1^{-k_1}\dots p_m^{-k_m}a_np_1^{l_1-k_1}\ra ]\\
  &=&   [\la a_1, \dots, a_{n-1} \ra * \la p_2^{-k_2}\dots p_m^{-k_m}a_np_1^{l_1}\ra ]\\
  &\to& [\la a_1, \dots, a_{n-1} \ra * \la p_2^{-k_2}\dots p_m^{-k_m}a_np_1^{l_1}p_2^{l_2-k_2}\ra ]\\
  &=&   [\la a_1, \dots, a_{n-1} \ra * \la p_3^{-k_3}\dots p_m^{-k_m}a_np_1^{l_1}p_2^{l_2}\ra ]\\
  &\to& \dots \\
  &\to& [\la a_1, \dots, a_{n-1} \ra * \la p_m^{-k_m}a_np_1^{l_1}p_2^{l_2}\dots p_{m-1}^{l_{m-1}}p_m^{l_m-k_m}\ra ]\\
  &=&   [\la a_1, \dots, a_{n-1} \ra * \la a_np_1^{l_1}p_2^{l_2}\dots p_{m-1}^{l_{m-1}}p_m^{l_m}\ra ]\\
  &=&   \psi(l_1, l_2, \dots, l_m).
\end{eqnarray*}
At each arrow, the above sequence is the same:  for some integers $i$ and $j$, we append $p_i^j$ to the last factor in a vertex of the form $[\la a_1, \dots, a_{n-1} \ra * \la u a v\ra]$.  We claim this may be done in via at most $2j+1$ steps in $\ES$, by the following edge path, where in this sequence, arrows each represent crossing exactly 1 edge of $\ES$:
\begin{eqnarray*}
  &&        [\la a_1, \dots, a_{n-1} \ra * \la u a v \ra] \\
  &\to& [\la a_{n-1} \ra * \la a_1, \dots, a_{n-2},  u a_n v \ra] \\
  &=&   [\la a_{n-1} \ra * \la a_1, \dots, a_{n-2},  u a_n v a_1^{i+1}a_2^{i+1}\cdots a_{n-2}^{i+1} \ra] \\
  &\to& [\la a_3, \dots, a_{n-2} \ra * \la a_1,a_2, a_{n-1},  u a_n v a_1^{i+1}a_2^{i+1}\cdots a_{n-2}^{i+1} \ra] \\
  &=&   [\la a_3, \dots, a_{n-2} \ra * \la a_1,a_2, a_{n-1},  u a_n v p_i \ra] \\
  &\to& [\la a_{n-1} \ra * \la a_1, \dots, a_{n-2},  u a_n v p_i \ra] \\
  &=&   [\la a_{n-1} \ra * \la a_1, \dots, a_{n-2},  u a_n v p_i a_1^{i+1}a_2^{i+1}\cdots a_{n-2}^{i+1} \ra] \\
  &\to& [\la a_3, \dots, a_{n-2} \ra * \la a_1,a_2, a_{n-1},  u a_n v p_i a_1^{i+1}a_2^{i+1}\cdots a_{n-2}^{i+1} \ra] \\
  &=&   [\la a_3, \dots, a_{n-2} \ra * \la a_1,a_2, a_{n-1},  u a_n v p_i^2 \ra] \\
  &\to& \dots \\
  &=&   [\la a_3, \dots, a_{n-2} \ra * \la a_1,a_2, a_{n-1},  u a_n v p_i^j \ra] \\
  &\to& [\la a_1, \dots, a_{n-1} \ra * \la u a v p_i^j \ra]
\end{eqnarray*}
Note here we shown the path when $j > 0$; the path when $j < 0$ is similar.  Combining these two descriptions, we have that:
    $$d_{\ES}(\psi(k_1,k_2, \dots, k_m), \psi(l_1, l_2, \dots, l_m)) \leq \sum_{i = 1}^{m} 2|l_i - k_i| + 1 = 2d+m.$$

As distances are bounded both above and below, we have a quasiisometry.
\end{proof}

As immediate corollaries, we obtain:

\begin{corollary}
\label{cor:nothyperbolic}
The graph $\ES$ is not hyperbolic in the sense of Gromov.
\end{corollary}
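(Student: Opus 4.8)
The plan is to read this off immediately from Theorem~\ref{thm:quasiflat}. Taking $m = 2$ in that theorem produces a quasiisometric embedding of $\R^2$ (equivalently of the lattice $\Z^2$, which is quasiisometric to $\R^2$) into the geodesic metric space $\ES$. So it suffices to invoke the standard fact that a Gromov hyperbolic geodesic space contains no quasiisometrically embedded copy of the Euclidean plane.

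I would justify that fact by contradiction. Assume $\ES$ is $\delta$-hyperbolic and let $\psi\colon \R^2 \to \ES$ be a $(\lambda,c)$-quasiisometric embedding. Pick $u, v, w \in \R^2$ spanning an equilateral triangle of side length $L$. The $\psi$-images of the three sides are $(\lambda,c)$-quasigeodesics, so by the Morse stability lemma each lies within Hausdorff distance $R = R(\delta,\lambda,c)$ of a genuine geodesic in $\ES$ with the same endpoints. Geodesic triangles in $\ES$ are $\delta$-thin, so these three geodesics, and hence the three quasigeodesic images up to an additive error of $2R$, are pairwise within distance $\delta + 2R$. Transporting this back through $\psi$, which distorts distances by at most the factor $\lambda$ plus an additive constant, forces the three sides of the equilateral triangle in $\R^2$ to be pairwise within a bound depending only on $\delta,\lambda,c$, independent of $L$. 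But the incenter of that triangle has distance $L/(2\sqrt 3)$ to each side, so for large $L$ this is absurd.

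I do not anticipate any real obstacle: all of the content is already in Theorem~\ref{thm:quasiflat}, and the reduction above is a textbook use of the Morse lemma together with the thin-triangles condition. Alternatively one may argue purely formally: a subspace of a space satisfying Gromov's four-point condition satisfies it with the same constant, the four-point condition is a quasiisometry invariant, and for geodesic spaces it is equivalent to Rips hyperbolicity; hence a quasiisometrically embedded $\R^2$ in $\ES$ would make $\R^2$ hyperbolic, which it is not. Either way, any $m \geq 2$ will do, and the corollary follows.
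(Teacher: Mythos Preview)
Your proposal is correct and matches the paper's approach exactly: the paper also states this as an immediate corollary of Theorem~\ref{thm:quasiflat}, without further argument. You have simply supplied the standard justification (via the Morse lemma or the four-point condition) that the paper leaves implicit.
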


This shows that $\ES$ does not have the hyperbolicity desired for an analogue for $\Out$ of the curve complex for the mapping class group.  The hyperbolicity of the curve complex was shown by Masur and Minsky \cite{MasurMinsky}, and has proven to be useful in numerous situations.

\begin{corollary}
\label{cor:infinitedimension}
The space $\ES$ has infinite asymptotic dimension.  The dimension of every asymptotic cone of $\ES$ is infinite.
\end{corollary}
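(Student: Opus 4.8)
The plan is to deduce both assertions formally from Theorem~\ref{thm:quasiflat}, which provides, for every $m \geq 1$, a quasiisometric embedding $\R^m \hookrightarrow \ES$. For the statement on asymptotic dimension, I would invoke the standard fact that asymptotic dimension is monotone under coarse embeddings (in particular under quasiisometric embeddings), together with $\mathrm{asdim}\,\R^m = m$. Theorem~\ref{thm:quasiflat} then gives $m \leq \mathrm{asdim}\,\ES$ for all $m$, so $\mathrm{asdim}\,\ES = \infty$; no further work is required.

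For the claim about asymptotic cones, I would first record that $\ES$ is quasiisometrically homogeneous: $\Out$ acts on $\ES$ with only finitely many orbits of vertices (an orbit being determined by the unordered pair of ranks of the two free factors in a factorization), hence coboundedly, so any vertex can be moved within bounded distance of any other by an isometry of $\ES$. Consequently, for a fixed nonprincipal ultrafilter $\omega$ and scaling sequence $s_k \to \infty$, all asymptotic cones $\mathrm{Cone}_\omega(\ES,(o_k),(s_k))$ are isometric, independently of the basepoint sequence $(o_k)$. Next, a quasiisometric embedding $f\colon \R^m \to \ES$ with constants $(L,C)$ induces, for the same $\omega$ and $(s_k)$, the map $[(y_k)] \mapsto [(f(y_k))]$, which is a well-defined $L$-bi-Lipschitz embedding of $\mathrm{Cone}_\omega(\R^m,(0),(s_k))$ into $\mathrm{Cone}_\omega(\ES,(f(0)),(s_k))$; and $\mathrm{Cone}_\omega(\R^m,(0),(s_k))$ is isometric to $\R^m$ since $\R^m$ is self-similar. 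Combining these two observations, every asymptotic cone of $\ES$ contains a bi-Lipschitz embedded copy of $\R^m$. Since a bi-Lipschitz embedding is in particular a topological embedding, and covering dimension is monotone under subspaces of metrizable spaces (for metrizable spaces it agrees with the small inductive dimension, which is subspace-monotone), every asymptotic cone of $\ES$ has topological dimension at least $\dim \R^m = m$; letting $m \to \infty$ gives the result.

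There is no substantial obstacle here --- the corollary is genuinely a formal consequence of Theorem~\ref{thm:quasiflat} plus textbook facts (see, e.g., the Bell--Dranishnikov survey for asymptotic dimension and the Drutu--Kapovich book for asymptotic cones). The only points requiring a moment's care are: (i) the reduction to basepoint-independent asymptotic cones, which is where the finitely-many-orbits observation enters; and (ii) checking that functoriality of the asymptotic cone turns a quasiisometric embedding into a bi-Lipschitz embedding (the additive constant $C$ disappears after dividing by $s_k \to \infty$). Both are routine, so I would keep the write-up to the two-line argument sketched above.
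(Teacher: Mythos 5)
The paper offers no proof at all of this corollary — it is simply presented as an ``immediate corollary'' of Theorem~\ref{thm:quasiflat} — so your write-up is necessarily your own; and it is correct. The asymptotic dimension claim is the textbook monotonicity argument. For the asymptotic-cone claim, the one point that genuinely requires thought, and which you handled, is that the quasiflats $\psi(\Z^m)$ produced in Theorem~\ref{thm:quasiflat} all pass through the single vertex $(\ba,S)$, so functoriality of the cone only directly produces a bi-Lipschitz $\R^m$ inside cones taken over the \emph{constant} basepoint sequence $(\psi(0))$. Your reduction to that case via coboundedness of the $\Out$-action is exactly what is needed (and, for the record, the orbit count is as you say: an $\Out$-orbit of a vertex $[A*B]$ is determined by the unordered pair $\{\mathrm{rank}\,A,\mathrm{rank}\,B\}$, giving $\lfloor n/2\rfloor$ orbits, hence a cobounded action). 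One small technical remark: the basepoint-independence of the cone uses a \emph{sequence} of isometries $g_k$ moving $\psi(0)$ to within uniformly bounded distance of the $k$th basepoint, not a single isometry of the space; the map $[(x_k)]\mapsto[(g_k x_k)]$ on cones is still a well-defined isometry, and that is the precise version of what you invoke. The final passage from a bi-Lipschitz embedded $\R^m$ to $\dim\geq m$ via subspace-monotonicity of covering dimension on metrizable spaces is also correct. In short: the proposal is a sound and complete filling-in of the argument the authors leave implicit.
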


\begin{corollary}
\label{cor:notqi} The identity map on vertices between $\ES$ and
$\FFZ$ is not a quasiisometry. Moreover, there is no
coarsely $\Out$-equivariant quasiisometry between $\ES$ and $\FFZ$.
\end{corollary}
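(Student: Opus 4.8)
The plan is to deduce both assertions from a single feature of the quasiflats produced by Theorem~\ref{thm:quasiflat}: the identity-on-vertices map $id^*\colon\ES\to\FFZ$ collapses each of them to a set of diameter at most $1$. With $S=\{n\}$ (the choice made in the proof of Theorem~\ref{thm:quasiflat}), every vertex $\psi(k_1,\dots,k_m)$ has the representative $\la a_1,\dots,a_{n-1}\ra*\la a_np_1^{k_1}\cdots p_m^{k_m}\ra$, so any two of these representatives literally contain the common free factor $A:=\la a_1,\dots,a_{n-1}\ra$; since $A$ is nontrivial (as $n>2$), the definition of $\FFZ$ gives $d_{\FFZ}\bigl(\psi(k_1,\dots,k_m),\psi(l_1,\dots,l_m)\bigr)\le 1$ for all lattice points. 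Thus $id^*$ sends the unbounded set $\psi(\Z^m)$ into a set of diameter $\le 1$, and since a quasiisometry carries unbounded subsets to unbounded subsets, $id^*$ is not a quasiisometry. (The same discrepancy can be phrased dynamically: the reducible automorphism $\phi\colon a_n\mapsto a_np_1$, which fixes $A$ pointwise, has an unbounded orbit $\{\phi^kv_0\}$ along one axis of the quasiflat in $\ES$ but an orbit of diameter $\le1$ in $\FFZ$.)

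For the ``moreover'' part I would argue by contradiction. It suffices to rule out coarsely $\Out$-equivariant quasiisometries $f\colon\ES\to\FFZ$, because a coarse inverse of a coarsely $\Out$-equivariant quasiisometry $\FFZ\to\ES$ is again coarsely $\Out$-equivariant (this uses only that a quasiisometry is a quasiisometric embedding and that $\Out$ acts isometrically on both graphs). So suppose $f\colon\ES\to\FFZ$ is a coarsely $\Out$-equivariant quasiisometry with equivariance constant $C$. The key point is that $\Out$ acts on the vertex set of $\ES$ with finitely many orbits: an automorphism can be arranged to send one free factorization to another whenever their unordered rank-pairs agree, so the orbits correspond to the partitions $n=k+(n-k)$. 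Fix vertices $u_1,\dots,u_r$, one per orbit, and put $D:=C+\max_j d_{\FFZ}\bigl(f(u_j),u_j\bigr)$, where each $u_j$ on the right is viewed as a vertex of $\FFZ$, i.e.\ $id^*(u_j)=u_j$. For an arbitrary vertex $v=\rho u_j$ of $\ES$ with $\rho\in\Out$, the fact that $\rho$ acts isometrically on $\FFZ$ together with coarse equivariance yields
\[
d_{\FFZ}\bigl(f(v),id^*(v)\bigr)\le d_{\FFZ}\bigl(f(\rho u_j),\rho f(u_j)\bigr)+d_{\FFZ}\bigl(\rho f(u_j),\rho u_j\bigr)\le C+d_{\FFZ}\bigl(f(u_j),u_j\bigr)\le D,
\]
so $f$ lies within bounded distance $D$ of $id^*$. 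Then $f(\psi(\Z^m))$ has diameter at most $1+2D$ in $\FFZ$ while $\psi(\Z^m)$ is unbounded in $\ES$ by Theorem~\ref{thm:quasiflat}, contradicting that $f$ is a quasiisometry.

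Almost all of the content sits in Theorem~\ref{thm:quasiflat}; beyond it, the proof rests on the single observation that, because $\Out$ has only finitely many orbits of vertices in $\ES$, every coarsely $\Out$-equivariant map $\ES\to\FFZ$ is a bounded perturbation of $id^*$. I expect the only delicate steps to be the two pieces of routine coarseness bookkeeping --- that a coarse inverse preserves coarse $\Out$-equivariance, and the displayed orbit-representative estimate --- and, reassuringly, no understanding of the intrinsic geometry of $\FFZ$ is needed at any point.
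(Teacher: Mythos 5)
Your proof of the first half matches the paper's: both observe that the quasiflat $\psi(\Z^m)$ collapses to diameter $\le 1$ in $\FFZ$ because of a common nontrivial free factor (the paper phrases it via $a_k$, $k\neq n$, having translation length $0$ on every Bass--Serre tree, which is the same thing).

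For the ``moreover'' part you take a genuinely different route, and it is correct. The paper fixes the specific reducible automorphism $\phi$ sending $a_n\mapsto a_np_1$ and argues with a single cyclic orbit: since $\Out$ acts by isometries, all $\langle\phi\rangle$-orbits in $\ES$ are unbounded (by $i$-length growth, Theorem~\ref{thm:lowerbound}) while all $\langle\phi\rangle$-orbits in $\FFZ$ are bounded, and a coarsely $\Out$-equivariant quasiisometry would force an unbounded orbit to map near a bounded one. You instead prove a stronger rigidity statement first --- that, because $\Out$ acts on the vertex set of $\ES$ with finitely many orbits (indexed by unordered rank pairs $\{k,n-k\}$) and $id^*$ is $\Out$-equivariant, every coarsely $\Out$-equivariant map $\ES\to\FFZ$ lies at uniformly bounded distance from $id^*$ --- and then the first half already produces the contradiction. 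Your version isolates $id^*$ as the essentially unique coarsely $\Out$-equivariant map (a Milnor--\v{S}varc-type observation), and your explicit reduction of the two-sided statement to the one-sided one via coarse inverses is a step the paper leaves implicit. The paper's argument is more local, needing only the dynamics of one cyclic subgroup rather than the full orbit structure, but both are sound and appeal to the same underlying facts (Theorem~\ref{thm:quasiflat}/\ref{thm:lowerbound} for unboundedness in $\ES$, and the collapsing observation in $\FFZ$).
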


\begin{proof}
The first half follows immediately from Theorem \ref{thm:quasiflat},
as the set $\psi\Z^m \subset \ES$ has diameter 1 in $\FFZ$:
for $k \neq n$, the element $a_k$ has translation length 0 on the
Bass-Serre tree of every element in $\psi\Z^m$.

For the second half we note that since $\Out$ acts on both
$\ES$ (and $\FFZ$) by isometries, for each $\phi\in \Out$
the orbits under powers of $\phi$ of vertices of $\ES$ (and
$\FFZ$) are either all bounded or all unbounded. Now consider
$\phi\in \Out$ taking the standard basis $\ba$ of $\F$ to the
basis obtained from $\ba$ by replacing $a_n$ with $a_np_1$. By
construction, for each index set $S$ the orbit of $(\ba, S)$ in
$\ES$ under iterations of $\phi$ is unbounded as the $i$-length
of $\phi^n(\ba)$ grows. But the same orbit is bounded in $\FFZ$.
Thus, there is no coarsely $\Out$-equivariant quasiisometry between
$\ES$ and $\FFZ$ because every orbit in $\FFZ$ under
iterations of $\phi$ is bounded and cannot be an image of an
unbounded orbit in $\ES$.
\end{proof}

An analogous results with identical proofs hold true for the
relationships between the free factorization graph $\ES$ and the free factor graph $\FF$ and between $\ES$ and the free splitting graph $\FS$. There is a natural (coarsely
well-defined for $n>2$) map $\Sigma\colon \ES\to\FF$
defined by sending a vertex $[A * B]$ in $\ES$ to the vertex
$[A]$ in $\FF$.  This map is induced by the same map on
vertices from $\FFZ$ to $\FF$, which is a coarsely
$\Out$-equivariant quasiisometry. Also there is a natural embedding $\imath\colon \ES\to\FS$ defined by sending a vertex $[A*B]$ in $\ES$ to the vertex $[A*B]$ in $\FS$, which is quasisurjection. However, neither of the above maps is a quasiisometry.

\begin{corollary}
\label{cor:notqi2} The maps $\Sigma\colon \ES\to\FF$ and $\imath\colon \ES\to\FS$ are not quasiisometries. Moreover, there is no coarsely $\Out$-equivariant quasiisometry between $\ES$ and $\FF$, and between $\ES$ and $\FS$.
\end{corollary}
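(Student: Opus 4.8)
The plan is to run the argument of Corollary \ref{cor:notqi} twice, once for $\Sigma\colon\ES\to\FF$ and once for $\imath\colon\ES\to\FS$. In both cases I would reuse the quasiflat $\psi\Z^m$ of Theorem \ref{thm:quasiflat} together with the reducible automorphism $\phi\in\Out$ that takes the standard basis $\ba$ to the basis obtained by replacing $a_n$ with $a_np_1$, and the only new thing to check each time is that the image of the quasiflat (and of the $\phi$-orbit of the basepoint $(\ba,S)$) is bounded in the target graph. Granting that, the conclusions follow exactly as in Corollary \ref{cor:notqi}: a map sending a set of infinite diameter into a set of bounded diameter cannot be a quasiisometry; and since $\Out$ acts by isometries on all three graphs, each $\phi$-orbit is entirely bounded or entirely unbounded, so if the orbit of $(\ba,S)$ is unbounded in $\ES$ --- which it is, since $|\phi^k\ba|_n\to\infty$ (Corollary \ref{cor:arbitrarylength}) and $d_{\ES}$ is bounded below by $\frac{1}{24}|\cdot|_n-1$ (Theorem \ref{thm:lowerbound}) --- while every $\phi$-orbit in the target is bounded, then coarse $\Out$-equivariance of a putative quasiisometry would force an unbounded orbit to map within bounded distance of a bounded one, which is impossible.

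For $\Sigma\colon\ES\to\FF$, $[A*B]\mapsto[A]$, I would take the quasiflat $\psi$ with the fixed index set $S$ chosen so that $\ol{S}=\{n\}$ (this is the same set of vertices as in the proof of Theorem \ref{thm:quasiflat}); then one factor of every vertex of $\psi\Z^m$ is exactly $H:=\la a_1,\dots,a_{n-1}\ra$, and this is where $n>2$ enters, which is also the range in which $\Sigma$ is coarsely well defined. Then $\Sigma$ is constant on $\psi\Z^m$ with value $[H]$, so $\Sigma$ is not a quasiisometry. Moreover $\phi(H)=H$, so $[H]$ is a $\phi$-fixed vertex of $\FF$ and every $\phi$-orbit in $\FF$ is bounded; combined with the unboundedness of the $\phi$-orbit of $(\ba,S)$ in $\ES$, this rules out a coarsely $\Out$-equivariant quasiisometry between $\ES$ and $\FF$.

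For the inclusion $\imath\colon\ES\to\FS$, with the same $S$ every vertex of $\psi\Z^m$ has the form $[H*\la a_nw\ra]$ for some $w\in H$. The key step I would prove is that $\imath$ of each such vertex is adjacent in $\FS$ to a single vertex $v_H$, namely the class of the splitting of $\F$ as an HNN extension with vertex group $H$ and trivial edge group. Two things need checking: first, the Bass--Serre tree of this HNN splitting is unchanged when the stable letter is multiplied on the right by an element of $H$, so all stable letters $a_nw$ ($w\in H$) give the same vertex $v_H$; second, $[H*\la a_nw\ra]$ is adjacent to $v_H$ because a separating sphere realizing $H*\la a_nw\ra$ can be chosen disjoint from a non-separating sphere realizing $v_H$ --- equivalently, there is a free splitting of $\F$ with two edges that collapses onto each of the two splittings. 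Then $\imath(\psi\Z^m)$ has diameter at most $2$ in $\FS$ while $\psi\Z^m$ has infinite diameter in $\ES$, so $\imath$ is not a quasiisometry. Finally $\phi$ fixes $v_H$, since $\phi(H)=H$ and $\phi(a_n)=a_np_1\in a_nH$, so every $\phi$-orbit in $\FS$ is bounded, and the same argument as for $\FF$ rules out a coarsely $\Out$-equivariant quasiisometry between $\ES$ and $\FS$.

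The step I expect to be the main obstacle is the last geometric claim: that the two-vertex edge splitting $H*\la a_nw\ra$ and the one-vertex HNN splitting $v_H$ are at distance $1$ in $\FS$, and that $v_H$ does not depend on $w\in H$. In the sphere-complex model this comes down to exhibiting disjoint representatives and tracking which one is separating; in the graph-of-groups model it comes down to understanding how collapsing a loop edge with trivial edge group restores a $\Z$ summand to the adjacent vertex group. Neither point is deep, but this is the one place where the argument genuinely diverges from the proof of Corollary \ref{cor:notqi}, which needed only the elementary observation that $a_k$ ($k\ne n$) is a common elliptic element of all the relevant splittings.
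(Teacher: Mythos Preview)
Your proposal is correct and matches the paper's intended approach: the paper itself gives no separate proof of Corollary~\ref{cor:notqi2}, merely asserting before the statement that ``analogous results with identical proofs hold true,'' so what you have written is precisely the required fleshing-out. In particular, your use of the single HNN vertex $v_H$ (fixed by $\phi$ and adjacent in $\FS$ to every $\imath(\psi(k_1,\dots,k_m))$ via the two-edge refinement with one loop) is exactly the extra observation needed for the $\FS$ case, and for $\FF$ your direct argument that $\Sigma$ is constant on $\psi\Z^m$ is a mild simplification of the paper's implicit route through the quasiisometry $\FFZ\to\FF$.
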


The last corollary provides a negative answer to a question of Bestvina and Feighn (the first half of Question 4.4
in~\cite{BestvinaFeighn}).

\bibliographystyle{alpha}
\bibliography{outfn2}

\end{document}